\title[On the Baum-Connes spectral sequence for $\IZ^n$]{ On the spectral sequence associated with the Baum-Connes Conjecture for $\IZ^n$ }
\author[Sel\c{c}uk Barlak]{ Sel\c{c}uk Barlak }
\address{ Westf\"alische Wilhelms-Universit\"at, Fachbereich Mathematik, \phantom{--------------}\linebreak \text{}\hspace{3.2mm} Einsteinstrasse 62, 48149 M\"unster, Germany }
\email{ selcuk.barlak@uni-muenster.de }
\thanks{ \emph{Supported by:} SFB 878 \emph{Groups, Geometry and Actions}, GIF Grant 1137-30.6/2011, ERC AdG 267079}
\subjclass[2010]{Primary 46L55, 46L80; Secondary 46L85, 58B34}
\begin{document}

\renewcommand\matrix[1]{\left(\begin{array}{*{10}{c}} #1 \end{array}\right)}  
\newcommand\set[1]{\left\{#1\right\}}  
\newcommand\mset[1]{\left\{\!\!\left\{#1\right\}\!\!\right\}}

\newcommand{\IA}[0]{\mathbb{A}} \newcommand{\IB}[0]{\mathbb{B}}
\newcommand{\IC}[0]{\mathbb{C}} \newcommand{\ID}[0]{\mathbb{D}}
\newcommand{\IE}[0]{\mathbb{E}} \newcommand{\IF}[0]{\mathbb{F}}
\newcommand{\IG}[0]{\mathbb{G}} \newcommand{\IH}[0]{\mathbb{H}}
\newcommand{\II}[0]{\mathbb{I}} \renewcommand{\IJ}[0]{\mathbb{J}}
\newcommand{\IK}[0]{\mathbb{K}} \newcommand{\IL}[0]{\mathbb{L}}
\newcommand{\IM}[0]{\mathbb{M}} \newcommand{\IN}[0]{\mathbb{N}}
\newcommand{\IO}[0]{\mathbb{O}} \newcommand{\IP}[0]{\mathbb{P}}
\newcommand{\IQ}[0]{\mathbb{Q}} \newcommand{\IR}[0]{\mathbb{R}}
\newcommand{\IS}[0]{\mathbb{S}} \newcommand{\IT}[0]{\mathbb{T}}
\newcommand{\IU}[0]{\mathbb{U}} \newcommand{\IV}[0]{\mathbb{V}}
\newcommand{\IW}[0]{\mathbb{W}} \newcommand{\IX}[0]{\mathbb{X}}
\newcommand{\IY}[0]{\mathbb{Y}} \newcommand{\IZ}[0]{\mathbb{Z}}

\newcommand{\CA}[0]{\mathcal{A}} \newcommand{\CB}[0]{\mathcal{B}}
\newcommand{\CC}[0]{\mathcal{C}} \newcommand{\CD}[0]{\mathcal{D}}
\newcommand{\CE}[0]{\mathcal{E}} \newcommand{\CF}[0]{\mathcal{F}}
\newcommand{\CG}[0]{\mathcal{G}} \newcommand{\CH}[0]{\mathcal{H}}
\newcommand{\CI}[0]{\mathcal{I}} \newcommand{\CJ}[0]{\mathcal{J}}
\newcommand{\CK}[0]{\mathcal{K}} \newcommand{\CL}[0]{\mathcal{L}}
\newcommand{\CM}[0]{\mathcal{M}} \newcommand{\CN}[0]{\mathcal{N}}
\newcommand{\CO}[0]{\mathcal{O}} \newcommand{\CP}[0]{\mathcal{P}}
\newcommand{\CQ}[0]{\mathcal{Q}} \newcommand{\CR}[0]{\mathcal{R}}
\newcommand{\CS}[0]{\mathcal{S}} \newcommand{\CT}[0]{\mathcal{T}}
\newcommand{\CU}[0]{\mathcal{U}} \newcommand{\CV}[0]{\mathcal{V}}
\newcommand{\CW}[0]{\mathcal{W}} \newcommand{\CX}[0]{\mathcal{X}}
\newcommand{\CY}[0]{\mathcal{Y}} \newcommand{\CZ}[0]{\mathcal{Z}}

\newcommand{\FA}[0]{\mathfrak{A}} \newcommand{\FB}[0]{\mathfrak{B}}
\newcommand{\FC}[0]{\mathfrak{C}} \newcommand{\FD}[0]{\mathfrak{D}}
\newcommand{\FE}[0]{\mathfrak{E}} \newcommand{\FF}[0]{\mathfrak{F}}
\newcommand{\FG}[0]{\mathfrak{G}} \newcommand{\FH}[0]{\mathfrak{H}}
\newcommand{\FI}[0]{\mathfrak{I}} \newcommand{\FJ}[0]{\mathfrak{J}}
\newcommand{\FK}[0]{\mathfrak{K}} \newcommand{\FL}[0]{\mathfrak{L}}
\newcommand{\FM}[0]{\mathfrak{M}} \newcommand{\FN}[0]{\mathfrak{N}}
\newcommand{\FO}[0]{\mathfrak{O}} \newcommand{\FP}[0]{\mathfrak{P}}
\newcommand{\FQ}[0]{\mathfrak{Q}} \newcommand{\FR}[0]{\mathfrak{R}}
\newcommand{\FS}[0]{\mathfrak{S}} \newcommand{\FT}[0]{\mathfrak{T}}
\newcommand{\FU}[0]{\mathfrak{U}} \newcommand{\FV}[0]{\mathfrak{V}}
\newcommand{\FW}[0]{\mathfrak{W}} \newcommand{\FX}[0]{\mathfrak{X}}
\newcommand{\FY}[0]{\mathfrak{Y}} \newcommand{\FZ}[0]{\mathfrak{Z}}

\newcommand{\Fa}[0]{\mathfrak{a}} \newcommand{\Fb}[0]{\mathfrak{b}}
\newcommand{\Fc}[0]{\mathfrak{c}} \newcommand{\Fd}[0]{\mathfrak{d}}
\newcommand{\Fe}[0]{\mathfrak{e}} \newcommand{\Ff}[0]{\mathfrak{f}}
\newcommand{\Fg}[0]{\mathfrak{g}} \newcommand{\Fh}[0]{\mathfrak{h}}
\newcommand{\Fi}[0]{\mathfrak{i}} \newcommand{\Fj}[0]{\mathfrak{j}}
\newcommand{\Fk}[0]{\mathfrak{k}} \newcommand{\Fl}[0]{\mathfrak{l}}
\newcommand{\Fm}[0]{\mathfrak{m}} \newcommand{\Fn}[0]{\mathfrak{n}}
\newcommand{\Fo}[0]{\mathfrak{o}} \newcommand{\Fp}[0]{\mathfrak{p}}
\newcommand{\Fq}[0]{\mathfrak{q}} \newcommand{\Fr}[0]{\mathfrak{r}}
\newcommand{\Fs}[0]{\mathfrak{s}} \newcommand{\Ft}[0]{\mathfrak{t}}
\newcommand{\Fu}[0]{\mathfrak{u}} \newcommand{\Fv}[0]{\mathfrak{v}}
\newcommand{\Fw}[0]{\mathfrak{w}} \newcommand{\Fx}[0]{\mathfrak{x}}
\newcommand{\Fy}[0]{\mathfrak{y}} \newcommand{\Fz}[0]{\mathfrak{z}}

\newcommand{\Ra}[0]{\Rightarrow}
\newcommand{\La}[0]{\Leftarrow}
\newcommand{\LRa}[0]{\Leftrightarrow}

\renewcommand{\phi}[0]{\varphi}
\newcommand{\eps}[0]{\varepsilon}

\newcommand{\quer}[0]{\overline}
\newcommand{\uber}[0]{\choose}
\newcommand{\ord}[0]{\operatorname{ord}}		
\newcommand{\GL}[0]{\operatorname{GL}}
\newcommand{\supp}[0]{\operatorname{supp}}	
\newcommand{\id}[0]{\operatorname{id}}		
\newcommand{\Sp}[0]{\operatorname{Sp}}		
\newcommand{\eins}[0]{\mathbf{1}}			
\newcommand{\diag}[0]{\operatorname{diag}}
\newcommand{\ind}[0]{\operatorname{ind}}
\newcommand{\auf}[1]{\quad\stackrel{#1}{\longrightarrow}\quad}
\newcommand{\hull}[0]{\operatorname{hull}}
\newcommand{\prim}[0]{\operatorname{Prim}}
\newcommand{\ad}[0]{\operatorname{Ad}}
\newcommand{\quot}[0]{\operatorname{Quot}}
\newcommand{\ext}[0]{\operatorname{Ext}}
\newcommand{\ev}[0]{\operatorname{ev}}
\newcommand{\fin}[0]{{\subset\!\!\!\subset}}
\newcommand{\diam}[0]{\operatorname{diam}}
\newcommand{\Hom}[0]{\operatorname{Hom}}
\newcommand{\Aut}[0]{\operatorname{Aut}}
\newcommand{\Inn}[0]{\operatorname{Inn}}
\newcommand{\del}[0]{\partial}
\newcommand{\dimeins}[0]{\dim^{\!+1}}
\newcommand{\dimnuc}[0]{\dim_{\mathrm{nuc}}}
\newcommand{\dimnuceins}[0]{\dimnuc^{\!+1}}
\newcommand{\dr}[0]{\operatorname{dr}}
\newcommand{\dimrok}[0]{\dim_{\mathrm{Rok}}}
\newcommand{\dimrokeins}[0]{\dimrok^{\!+1}}
\newcommand{\dreins}[0]{\dr^{\!+1}}
\newcommand*\onto{\ensuremath{\joinrel\relbar\joinrel\twoheadrightarrow}} 
\newcommand*\into{\ensuremath{\lhook\joinrel\relbar\joinrel\rightarrow}}  
\newcommand{\im}[0]{\operatorname{im}}
\newcommand{\dst}[0]{\displaystyle}
\newcommand{\cstar}[0]{$\mathrm{C}^*$}
\newcommand{\ann}[0]{\operatorname{Ann}}
\newcommand{\dist}[0]{\operatorname{dist}}
\newcommand{\asdim}[0]{\operatorname{asdim}}
\newcommand{\asdimeins}[0]{\operatorname{asdim}^{\!+1}}
\newcommand{\amdim}[0]{\dim_{\mathrm{am}}}
\newcommand{\amdimeins}[0]{\amdim^{\!+1}}
\newcommand{\dimrokc}[0]{\dim_{\mathrm{Rok,c}}}
\newcommand{\dimrokceins}[0]{\dimrokc^{\!+1}}
\newcommand{\act}[0]{\operatorname{Act}}
\newcommand{\idlat}[0]{\operatorname{IdLat}}
\newcommand{\Cu}[0]{\operatorname{Cu}}
\newcommand{\Ost}[0]{\CO_\infty^{\mathrm{st}}}
\newcommand{\op}[0]{\operatorname}
\newcommand{\kernel}[0]{\operatorname{ker}}
\newcommand{\coker}[0]{\operatorname{coker}}

\newtheorem{satz}{Satz}[section]		
\newtheorem{cor}[satz]{Corollary}
\newtheorem{lemma}[satz]{Lemma}
\newtheorem{prop}[satz]{Proposition}
\newtheorem{theorem}[satz]{Theorem}
\newtheorem*{theoreme}{Theorem}

\theoremstyle{definition}
\newtheorem{definition}[satz]{Definition}
\newtheorem*{definitione}{Definition}
\newtheorem{defprop}[satz]{Definition \& Proposition}
\newtheorem{nota}[satz]{Notation}
\newtheorem*{notae}{Notation}
\newtheorem{rem}[satz]{Remark}
\newtheorem*{reme}{Remark}
\newtheorem{example}[satz]{Example}
\newtheorem{defnot}[satz]{Definition \& Notation}
\newtheorem{question}[satz]{Question}
\newtheorem*{questione}{Question}
\newtheorem{construction}[satz]{Construction}

\newenvironment{bew}{\begin{proof}[Proof]}{\end{proof}}

\begin{abstract}
We examine a spectral sequence that is naturally associated with the Baum-Connes Conjecture with coefficients for $\IZ^n$ and also constitutes an instance of Kasparov's construction in his work on equivariant $KK$-theory. For $k\leq n$, we give a partial description of the $k$-th page differential of this spectral sequence, which takes into account the natural $\IZ^k$-subactions. In the special case that the action is trivial in $K$-theory, the associated second page differential is given by a formula involving the second page differentials of the canonical $\IZ^2$-subactions. For $n=2$, we give a concrete realisation of the second page differential in terms of Bott elements. We prove the existence of $\IZ^2$-actions, whose associated second page differentials are non-trivial. One class of examples is given by certain outer $\IZ^2$-actions on Kirchberg algebras, which act trivially on $KK$-theory. This relies on a classification result by Izumi and Matui. A second class of examples consists of certain pointwise inner $\IZ^2$-actions. One instance is given as a natural action on the group \cstar-algebra of the discrete Heisenberg group $H_3$. We also compute the $K$-theory of the corresponding crossed product. Moreover, a general and concrete construction yields various examples of pointwise inner $\IZ^2$-actions on amalgamated free product \cstar-algebras with non-trivial second page differentials. Among these, there are actions which are universal, in a suitable sense, for pointwise inner $\IZ^2$-actions with non-trivial second page differentials. We also compute the $K$-theory of the crossed products associated with these universal \cstar-dynamical systems.
\end{abstract}

\maketitle



\begin{samepage}
\tableofcontents
\end{samepage}

\setcounter{section}{-1}

\section{Introduction}
\noindent
The study of group actions on \cstar-algebras and their associated crossed product \cstar-algebras plays an important role within the field of operator algebra theory. Beside the fact that many interesting and prominent \cstar-algebras arise naturally as crossed products, their importance is also due to the various connections to other fields such as representation theory, index theory and topological dynamical systems.

One of the most important invariants for crossed product \cstar-algebras is topological $K$-theory. However, given an action of a locally compact group on a \cstar-algebra, it is often very difficult to compute the $K$-theory of the corresponding crossed product, even if the $K$-theory of the underlying \cstar-algebra is well understood. One approach to the computation of the $K$-theory of the reduced crossed product is proposed by the famous Baum-Connes Conjecture \cite{BaumConnes2000, BaumConnesHigson1994}. The conjecture in its general form involving coefficients predicts that for any second countable locally compact group $G$, any \cstar-algebra $A$, and any strongly continuous $G$-action $\alpha:G\curvearrowright A$, the assembly map
\[
\mu_A:K_*^{\op{top}}(G;A)\to K_*(A\rtimes_{\alpha,r} G)
\]
is an isomorphism, see also \cite{BaumConnesHigson1994}. This conjecture is known to hold for a strikingly large class of groups. In this context, let us emphasize the deep work of Higson and Kasparov \cite{HigsonKasparov2001} on groups with the Haagerup property, and of Lafforgue \cite{Lafforgue2012} on hyperbolic groups.

The left hand side, $K_*^{\op{top}}(G;A)$, is the topological $K$-theory for $G$ with coefficients in $A$. Beside others, the importance of the Baum-Connes Conjecture stems from the fact that the topological nature of $K_*^{\op{top}}(G;A)$ allows for computational tools, which are not evident to exist on the right hand side of the assembly map. Among others, there always exists a spectral sequence associated with $K_*^{\op{top}}(G;A)$.

In this paper, the spectral sequence for $G=\IZ^n$ is the object of our interest. Provided that $A$ is separable, an elegant \cstar-algebraic description of this spectral sequence can be derived from the formulation of the Baum-Connes Conjecture due to Meyer and Nest \cite{MeyerNest2006}. In their framework, the left hand side of the assembly map is given as $K_*(S^n(\CC_0(\IR^n)\otimes A)\rtimes_{\sigma\otimes\alpha}\IZ^n)$, where $\sigma$ is induced by the natural action of $\IZ^n$ on $\IR^n$. The natural filtration of $\IR^n$ by its skeletons yields a finite cofiltration of $(\CC_0(\IR^n)\otimes A)\rtimes_{\sigma\otimes\alpha}\IZ^n$ by \cstar-algebras. Given such a cofiltration, there is a standard procedure relying on Massey's technique of exact couples \cite{Massey1952, Massey1953}, which produces a spectral sequence converging to the $K$-theory of the cofiltrated \cstar-algebra. This is in great analogy to Schochet's \cite{Schochet1981} spectral sequence associated with filtrations of \cstar-algebras by closed ideals. In this way, the left hand side of the assembly map gives rise to a spectral sequence, which, in principle, allows us to compute the $K$-theory of the crossed product $A\rtimes_\alpha \IZ^n$.

The Baum-Connes spectral sequence for $\IZ^n$ is also induced by a natural cofiltration of the mapping torus $\CM_\alpha(A)$. In fact, the \cstar-algebras $\CM_\alpha(A)$ and $(\CC_0(\IR)\otimes A)\rtimes_{\sigma\otimes\alpha}\IZ^n$ are strongly Morita equivalent in a natural way by a result of Raeburn and Williams \cite{RaeburnWilliams1985}. As a consequence, the two spectral sequences are isomorphic. It seems that the description of the spectral sequence in terms of the mapping torus cofiltration has some advantages when it comes to investigating the occurring differentials. In the special case of a single automorphism, for example, this identification can be used to deduce the Pimsner-Voiculescu sequence \cite{PimsnerVoiculescu1980} from the left hand side of the Baum-Connes assembly map. The spectral sequence associated with the mapping torus cofiltration already appeared as a special case of Kasparov's much more general construction in \cite[6.10]{Kasparov1988}. So far, it has been used in different contexts, see for example \cite{BellissardKellendonkLegrand2001, SavignenBellissard2009}.

As always when working with spectral sequences, it is crucial, and in many cases very difficult, to understand the corresponding differentials. Therefore, this work is supposed to provide a starting point for a systematic investigation of the differentials of the Baum-Connes spectral sequence for $\IZ^n$. Our technical main result in this direction, Theorem \ref{partialDescriptionDifferentials}, yields a partial description of the $k$-th page differential in terms of the canonical $\IZ^k$-subactions. Although this description is far from being complete, it permits some interesting observations. For example, we obtain a complete description of the differential $d_1$ on the $E_1$-term. The pair $(E_1,d_1)$ turns out to coincide with the \emph{Pimsner-Voiculescu complex} defined by Savignen and Bellissard \cite{SavignenBellissard2009}, which reveals a striking similarity with the Pimsner-Voiculescu sequence. In particular, $d_1$ is completely determined by the induced $\IZ^n$-action in $K$-theory. This representation of the $E_1$-term also allows us to identify the $E_2$-term as the group cohomology of $\IZ^n$ with values in $K_*(A)$, which has already been pointed out by Kasparov in \cite[6.10]{Kasparov1988}.
Moreover, for a $\IZ^n$-action, which induces the trivial action on $K$-theory, we completely describe the second page differential in terms of the second page differentials of the canonical $\IZ^2$-subactions.

The main focus of this paper, however, is on the $K$-theory for crossed products by $\IZ^2$-actions. The case of $\IZ^2$-actions is still quite accessible via elementary methods, so that we are able to provide a concrete description of the corresponding second page differentials. This requires that we switch back from the topological perspective of the mapping torus to the algebraic perspective of the crossed product. Moreover, we discuss several instances of $\IZ^2$-actions on \cstar-algebras, whose associated second page differentials are non-trivial. This shows that, unlike for $\IZ$-actions, the $K$-theory for a crossed product by a $\IZ^2$-action is in general not determined, up to group extension problems, by the induced action on $K$-theory. Some of these examples are of independent interest, and we compute the $K$-theory of their corresponding crossed products. 

The existence of one large class of $\IZ^2$-actions with non-trivial second page differential turns out to be a consequence of Izumi's and Matui's classification of outer locally $KK$-trivial $\IZ^2$-actions on Kirchberg algebras, see \cite{IzumiMatui2010}. As a second class, we consider pointwise inner $\IZ^2$-actions. Contrary to the na\"{i}ve expectation, we find examples with non-trivial second page differential even within this class of actions. This is even more remarkable, as $\IZ^2$-actions arising from group representations into the unitary group of the underlying \cstar-algebra all give rise to isomorphic crossed products. An instructive example, which is also of interest in its own right, is given as a natural pointwise inner $\IZ^2$-action on the group \cstar-algebra of the discrete Heisenberg group $H_3$. This \cstar-algebra has already obtained a great deal of attention, whereat we point out the thorough investigation of Anderson and Paschke \cite{AndersonPaschke1989}. We conclude this paper by giving a general construction of pointwise inner $\IZ^2$-actions on certain amalgamated free product \cstar-algebras. Among these, we find actions which are universal, in a suitable sense, for non-trivial second page differentials coming from pointwise inner $\IZ^2$-actions. We also compute the $K$-theory of the crossed products associated with these universal actions.

The paper is organised as follows. In the first section we shortly recall the construction of the Baum-Connes spectral sequence and of Kasparov's spectral sequence for $\IZ^n$. Both spectral sequences are induced by natural cofiltrations of \cstar-algebras, and it turns out that they are isomorphic. The general machinery of producing a spectral sequence to a given finite cofiltration of \cstar-algebras and some further information on spectral sequences can be found in the appendix. 

In the second section, we investigate the differentials of the Baum-Connes spectral sequence. The definition of the spectral sequence using exact couples in the sense of Massey \cite{Massey1952,Massey1953} allows us to obtain a partial description of the $k$-th page differential in terms of the canonical $\IZ^k$-subactions. By applying our general result to the particular case $k=1$, we conclude that the $E_1$-term of the Baum-Connes spectral sequence coincides with the Pimsner-Voiculescu complex defined by Savignen and Bellissard \cite{SavignenBellissard2009}. We use this description to identify the $E_1$-term as a certain Koszul complex over the integral group ring of $\IZ^n$. With this description at hand, it is more or less standard to show that the $E_2$-term coincides with the group cohomology of $\IZ^n$ with values in $K_*(A)$, where  the $\IZ^n$-module structure is induced by $\alpha$. At the end of this section, we slightly extend our technical main result in the case that $k=2$. As a consequence, we get a complete description of the second page differential in terms of the second page differentials of the canonical $\IZ^2$-subactions, provided that the $\IZ^n$-action is trivial on $K$-theory.

In the third section, we provide concrete lifts for images under the boundary map of the Pimsner-Voiculescu sequence $\rho_*:K_*(A\rtimes_\alpha \IZ)\to K_{*+1}(A)$. The lifts for $\rho_1$ are well-known and easily found by using the partial isometry picture of the index map. They are all given as, what we call, generalised Bott elements associated with a commuting pair of a projection and a unitary. Finding suitable lifts for $\rho_0$ is more difficult. These are given as generalised Bott elements in the sense of Exel \cite{Exel1993}. To define these lifts, we use a result by Dadarlat \cite{Dadarlat1995}, which provides an alternative description of the $K_1$-group for a unital \cstar-algebra.

In the fourth section, we use the results of the third section and give a concrete description for the second page differential associated with a $\IZ^2$-action. Given a $\IZ^2$-action $\alpha$ on $A$ with canonical generators $\alpha_1$ and $\alpha_2$, the natural extension $\check{\alpha}_2\in \Aut(A\rtimes_{\alpha_1}\IZ)$ induces an endomorphism of the Pimsner-Voiculescu sequence for $\alpha_1$, which we suitably interpret as a short exact sequence. The second page differential associated with $\alpha$ then basically reduces to the Snake Lemma homomorphism of the corresponding diagram. Using this identification, we see that the image of this differential consists of generalised Bott-elements.

In the fifth section, we exploit Izumi's and Matui's classification result \cite{IzumiMatui2010} to show the existence of $\IZ^2$-actions on Kirchberg algebras, whose associated second page differentials do not vanish. Given a Kirchberg algebra $A$, we show that their classification invariant of a locally $KK$-trivial action $\alpha:\IZ^2\curvearrowright A$, which is an element in $KK(A,SA)$, descends to the associated second page differential, which basically amounts to a homomorphism $K_*(A)\to K_{*-1}(A)$. They prove that every element in $KK(A,SA)$ is realised as the invariant of such a $\IZ^2$-action, provided that $A$ is stable. If $A$ moreover satisfies the universal coefficient theorem (UCT) by Rosenberg and Schochet \cite{RosenbergSchochet1987}, then every homomorphism $K_*(A)\to K_{*-1}(A)$ occurs as the second page differential of some $\IZ^2$-action on $A$.

In the sixth section, we provide examples of pointwise inner $\IZ^2$-actions, which induce non-trivial second page differentials. After a general discussion on the second page differential of a pointwise inner $\IZ^2$-action, we consider the group \cstar-algebra of the discrete Heisenberg group \cstar-algebra, $\mathrm{C}^*(H_3)$, equipped with a natural pointwise inner action. This action is universal in the sense that every pointwise inner $\IZ^2$-action on a unital \cstar-algebra $B$ gives rise to an equivariant and unital $*$-homomorphism $B\to \mathrm{C}^*(H_3)$. We show that the associated second page differential is non-trivial and compute the $K$-theory of the corresponding crossed product. It turns out that this crossed product is not isomorphic in $K$-theory to the crossed product of $\mathrm{C}^*(H_3)$ by the trivial $\IZ^2$-action. Finally, we present a general method of constructing pointwise inner actions, which induce non-trivial second page differentials. All occurring \cstar-algebras are given as amalgamated free products of the form $A\ast_{\CC(\IT)}B$. We require that $A$ is equipped with a poinwise inner action, which has the property that the commutator of the two implementing unitaries has full spectrum. Moreover, $B$ is supposed to contain a central unitary with full spectrum. This central unitary gets identified under the amalgamation process with the abovementioned commutator and allows us to extend the action of $A$ to a pointwise inner action on $A\ast_{\CC(\IT)}B$. An additional, relatively mild $K$-theoretical assumption on $B$ ensures that the second page differential associated with the action on $A\ast_{\CC(\IT)}B$ is non-trivial. Among the constructed examples, we find \cstar-dynamical systems which are universal, in a suitable sense, for non-trivial second page differentials associated with pointwise inner $\IZ^2$-actions. We compute the $K$-theory of the crossed products associated with these universal \cstar-dynamical systems.

This paper is based on the main part of my doctoral thesis completed at Westf\"alische Wilhelms-Universit\"at M\"unster. I would like to thank my advisor Joachim Cuntz for his guidance and several helpful discussions. Moreover, I am grateful to Siegfried Echterhoff, Dominic Enders, Sven Raum, Nicolai Stammeier, G\'abor Szab\'o, Christian Voigt, Christoph Winges, and Wilhelm Winter for inspiring dicussions held in the course of my doctoral studies. I would also like to thank Dominic Enders, Sven Raum, Nicolai Stammeier, G\'abor Szab\'o, and Christoph Winges for their valuable feedback on preliminary versions. Finally, I would also like to thank Chris Phillips for pointing out to me the existence of certain $K$-theoretically trivial actions $\alpha:\IZ^2\curvearrowright A$ with the property that $A\rtimes_\alpha \IZ^2$ and $A\rtimes_{\id} \IZ^2$ have different $K$-theory.


\section{Preliminaries}
\noindent
\begin{samepage}
\begin{nota} Unless specified otherwise, we will stick to the following notations throughout the paper.
\begin{itemize}
\item For a \cstar-algebra $A$, we write $\CM(A)$ for its multiplier algebra and $\CZ(A)$ for the its centre.
\item For a \cstar-algebra $A$ and $n\in \IN$, let $\CP_n(A)$ denote the set of projections in $M_n(A)$. If $A$ is unital, then $\CU_n(A)$ denotes the set of unitaries in $M_n(A)$.
\item For elements $a,b$ in a \cstar-algebra $A$, $[a,b]=ab-ba$ denotes commutator of $a$ and $b$.
\item If $H$ is a Hilbert space, then $\CK(H)$ denotes the \cstar-algebra of compact operators on $H$. Moreover, $\CK$ denotes the compact operators on a separable infinite dimensional Hilbert space.
\item $\CT$ denotes the Toeplitz algebra and $v\in \CT$ the canonical isometry.
\item We write $\set{e_1,\ldots,e_n}$ for the canonical $\IZ$-basis of $\IZ^n$.
\item For a \cstar-algebra $A$ and an action $\alpha:\IZ^n\curvearrowright A$, we write $\alpha_i=\alpha_{e_i}\in \Aut(A)$. Given an equivariant $*$-homomorphism $\phi:(A,\alpha,\IZ^n)\to (B,\beta,\IZ^n)$, we write $\check{\phi} : A\rtimes_\alpha \IZ^n\to B\rtimes_\beta \IZ^n$ for the natural extension of $\phi$.
\end{itemize}
\end{nota}
\end{samepage}

Let $A$ be a separable \cstar-algebra and $\alpha:\IZ^n\curvearrowright A$ an action by automorphisms. We shortly recall the definition of the associated Baum-Connes assembly map by using the framework of Meyer and Nest \cite{MeyerNest2006}. Let $\CC_0(\IR^n)$ be equipped with the $\IZ^n$-action $\sigma$ given by left translation, and consider $S^n$ and $\IC$ as trivial $\IZ^n$-algebras. Let $D\in KK^{\IZ^n}(S^n\CC_0(\IR^n),\IC)$ denote the Dirac operator and $D_A\in KK^{\IZ^n}(S^n\CC_0(\IR^n)\otimes A,A)$ the element obtained by taking the exterior product of $D$ with the identity on $A$. The assembly map is the homomorphism
\[
\mu_A:K_*(S^n(\CC_0(\IR^n)\otimes A)\rtimes_{\sigma\otimes\alpha} \IZ^n)\to K_*(A\rtimes_\alpha \IZ^n)
\]
induced by the image of $D_A$ under the descent homomorphism 
\[
KK^{\IZ^n}(S^n\CC_0(\IR^n)\otimes A,A)\to KK(S^n(\CC_0(\IR^n)\otimes A)\rtimes_{\sigma\otimes\alpha} \IZ^n,A\rtimes_\alpha \IZ^n).
\]
The validity of the Baum-Connes Conjecture with coefficients for $\IZ^n$ is a consequence of the fact that $D$ is invertible, which in turn follows from Kasparov's \cite{Kasparov1988} Dirac-dual Dirac method.

\begin{nota}
For a sequence of natural numbers $1\leq \mu_1<\ldots<\mu_k\leq n$, we write $\mu=(\mu_1,\ldots,\mu_k)$ for the induced $k$-tuple, and define $T(k,n)$ to be the set of all such ordered $k$-tuples. By convention, $T(0,n)$ is the set that contains only the empty tuple. We do not distinguish between a $1$-tuple $\lambda\in T(1,n)$ and the corresponding number $\lambda_1$.
\end{nota}

Consider the natural filtration of $\IR^n$ by its skeletons
\[
\emptyset=Y_{-1}\subset Y_0 \subset \cdots \subset Y_n = \IR^n, 
\]
that is,
\[
Y_k = \set{(x_1,\ldots,x_n)\in \IR^n\ :\ x_{\mu_1},\ldots,x_{\mu_{n-k}}\in \IZ \ \text{for some} \ \mu\in T(n-k,n)}.
\]
As each $Y_k\subset \IR^n$ is invariant under the canonical action of $\IZ^n$, the above filtration yields a finite cofiltration of \cstar-algebras

\begin{align}
\label{cofiltration K-homology}
\begin{xy}
\xymatrix{
(\CC_0(\IR^n)\otimes A)\rtimes_{\sigma\otimes\alpha} \IZ^n =: C_n \ar@{->>}[r] & \cdots \ar@{->>}[r] & (\CC_0(\IZ^n)\otimes A)\rtimes_{\sigma\otimes\alpha}\IZ^n=: C_0.
}
\end{xy}
\end{align}
A standard procedure, which we explain in Appendix \ref{Appendix}, yields a cohomological spectral sequence converging to $K_*((\CC_0(\IR^n)\otimes A)\rtimes_{\sigma\otimes\alpha} \IZ^n)\cong K_{*+n}(A\rtimes_\alpha\IZ^n)$.

\begin{definition}
Let $A$ be a separable \cstar-algebra and $\alpha:\IZ^n\curvearrowright A$ an action. We call the spectral sequence induced by the cofiltration \eqref{cofiltration K-homology} the \emph{Baum-Connes spectral sequence} for $\alpha$.
\end{definition}

A slightly different picture of this (or strictly speaking of an isomorphic) spectral sequence turns out to be better suited for our purposes. As $\IZ^n$ acts freely and properly on $\IR^n$, an imprimitivity result by Raeburn and Williams \cite[Theorem 2.2]{RaeburnWilliams1985} yields that $(\CC_0(\IR^n)\otimes A)\rtimes_{\sigma\otimes\alpha} \IZ^n$ is Morita equivalent to the mapping torus associated with $\alpha$, which by definition is the \cstar-algebra
\[
\CM_\alpha(A):=\left\lbrace f\in \CC(\IR^n, A)\ :\ f(x+z)=\alpha_{z}(f(x)),\ x\in \IR^n,\ z\in \IZ^n\right\rbrace.
\]
The conceptual reason for this lies in the fact that $\CM_\alpha(A)$ is the generalised fixed point algebra of the Rieffel proper $\IZ^n$-action $\sigma\otimes \alpha$, see
\cite{Rieffel1990}. Obviously, the restriction to $[0,1]^n\subset \IR^n$ induces a $*$-isomorphism between the mapping torus and the \cstar-algebra of functions $f\in \CC([0,1]^n,A)$ satisfying
\[
f(t_1,\ldots,t_{i-1},1,t_{i+1},\ldots,t_n) = \alpha_i(f(t_1,\ldots,t_{i-1},0,t_{i+1},\ldots,t_n))
\]
for all $t_1,\ldots,t_n\in [0,1]$. We will use this identification without further mentioning.

Consider the following filtration of the $n$-cube
\[
\emptyset=X_{-1}\subset X_0 \subset \cdots \subset X_n=[0,1]^n,
\]
where
\[
X_k:=\left\lbrace t\in [0,1]^n\ :\ t_{\mu_1}=\ldots=t_{\mu_{n-k}}=0\ \text{for some}\ \mu\in T(n-k,n)\right\rbrace.
\]
This filtration gives rise to a finite cofiltration of the mapping torus
\begin{align}
\label{cofiltrationMappingTorus}
\begin{xy}
	\xymatrix@C+0.2cm{
\CM_\alpha(A)=:F_n\ar@{->>}[r]^/0.3cm/{\pi_n} & F_{n-1} \ar@{->>}[r]^{\pi_{n-1}} & \cdots \ar@{->>}[r] & F_0:=A \ar@{->>}[r]^/-0.1cm/{\pi_0} & F_{-1}:=0,
}
\end{xy}
\end{align}
which in turn induces a spectral sequence converging to $K_*(\CM_\alpha(A))$. This spectral sequence is a special case of (the cohomological analogue of) a much more general construction due to Kasparov \cite[6.10]{Kasparov1988}.

\begin{prop}
Let $A$ be a separable \cstar-algebra and $\alpha:\IZ^n\curvearrowright A$ an action. Then the Baum-Connes spectral sequence for $\alpha$ is isomorphic to the spectral sequences associated with the mapping torus cofiltration \eqref{cofiltrationMappingTorus}.
\end{prop}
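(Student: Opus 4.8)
The plan is to reduce the assertion to the naturality of the exact-couple construction recalled in Appendix \ref{Appendix}. By construction, each of the two spectral sequences is completely determined by the exact couple assembled from the six-term $K$-theory exact sequences of the defining short exact sequences
\[
0\to \ker\pi_k^C\to C_k\to C_{k-1}\to 0,\qquad 0\to\ker\pi_k\to F_k\xrightarrow{\pi_k}F_{k-1}\to 0,
\]
where $\pi_k^C$ denotes the $k$-th surjection of \eqref{cofiltration K-homology} and $\pi_k$ that of \eqref{cofiltrationMappingTorus}. Hence it suffices to produce an isomorphism of these two exact couples, that is, a family of isomorphisms $K_*(C_k)\cong K_*(F_k)$ compatible with the surjections and with the connecting homomorphisms; by the general principle that isomorphic exact couples induce isomorphic spectral sequences (Appendix \ref{Appendix}), this then yields the claim.

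To construct such an isomorphism I would first observe that the Raeburn--Williams imprimitivity theorem \cite[Theorem 2.2]{RaeburnWilliams1985} is not special to $\IR^n$: for any $\IZ^n$-invariant locally closed subset $Z\subseteq\IR^n$ the restricted action is again free and proper, so $(\CC_0(Z)\otimes A)\rtimes_{\sigma\otimes\alpha}\IZ^n$ is Morita equivalent to the generalised fixed-point algebra of $\sigma\otimes\alpha$ over $Z$, which is exactly the mapping-torus algebra over $Z/\IZ^n$. Applying this to the closed invariant skeleta $Y_k$ gives $C_k\sim_{\mathrm{M}}F_k$: under the quotient $\IR^n\to\IT^n\cong[0,1]^n/{\sim}$ the skeleton $Y_k$ maps onto $X_k$, the passage from ``coordinate in $\IZ$'' to ``coordinate equal to $0$'' reflecting precisely the gluing of the faces $t_i=0$ and $t_i=1$ in the mapping torus. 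Applying it instead to the open invariant cells $Y_k\setminus Y_{k-1}$ (the points with exactly $n-k$ integer coordinates) identifies the ideals $\ker\pi_k^C$ with $\ker\pi_k$ up to Morita equivalence.

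The essential point, and the step I expect to be the main obstacle, is that all these equivalences can be realised as restrictions of a \emph{single} imprimitivity bimodule, so that they are natural with respect to the inclusions $Y_{k-1}\subseteq Y_k$. Concretely, the bimodule implementing $C_n\sim_{\mathrm{M}}\CM_\alpha(A)$ is a module over the algebra $\CC(\IT^n)$ of $\IZ^n$-invariant bounded continuous functions on $\IR^n$, which embeds centrally into the multiplier algebras of both $C_n$ and $\CM_\alpha(A)$; cutting down by the ideals of $\CC(\IT^n)$ determined by the invariant closed sets $Y_k$ yields imprimitivity bimodules for the subquotients that fit into short exact sequences of bimodules covering the two short exact sequences of \cstar-algebras above. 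A morphism of short exact sequences of this kind induces, by naturality of the index and exponential maps, a commuting ladder between the two $K$-theory long exact sequences, hence the sought isomorphism of exact couples. Verifying that the Raeburn--Williams bimodule is genuinely $\CC(\IT^n)$-linear and respects the ideal structure indexed by the invariant subsets is the only nonformal ingredient; granting it, the isomorphism of the two spectral sequences follows at once.
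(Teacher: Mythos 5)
Your proposal is correct and follows essentially the same route as the paper: both rest on the Raeburn--Williams imprimitivity bimodules for the invariant skeleta $Y_k$ and on the compatibility of these bimodules with the two cofiltrations. The step you flag as the main obstacle --- upgrading the compatible bimodules to a commuting ladder of $K$-theory exact couples --- is exactly where the paper's argument does its work: it passes to the linking algebras of the bimodules $Z_k$, obtains surjective $*$-homomorphisms $L_k\to L_{k-1}$ from the bimodule surjections $Z_k\to Z_{k-1}$, and then applies Brown's stabilisation theorem to convert the chain of Morita equivalences into an honest cofiltration-preserving $*$-isomorphism $C_k\otimes\CK\cong F_k\otimes\CK$, after which the isomorphism of spectral sequences is immediate from the naturality recorded in Appendix \ref{Appendix}; this is the device you would need to make your appeal to ``naturality of the index and exponential maps'' rigorous, since Morita-induced $K$-theory isomorphisms are not themselves induced by $*$-homomorphisms.
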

\begin{proof}
As $\IZ^n$ also acts freely and properly on $Y_k$ for $k=0,\ldots,n$, the result of Raeburn and Williams \cite[Theorem 2.2]{RaeburnWilliams1985} yields an imprimitivity bimodule $Z_k$ between $C_k$ and the \cstar-algebra
\[
\set{f\in \CC(Y_k,A)\ :\ f(x+z) = \alpha_z(f(x)),\ x\in Y_k,\ z\in \IZ^n},
\]
which is isomorphic to $F_k$ by restriction to $X_k\subset Y_k$. From the definition, one can deduce that these imprimitivity bimodules are compatible in the sense that for $k=1,\ldots,n$, there exists a surjective $\IC$-linear vector space homomorphism $Z_k\to Z_{k-1}$ respecting the module structures and the inner products. Let $L_k$ and $L_{k-1}$ denote the linking algebras for $Z_k$ and $Z_{k-1}$, respectively, see \cite{BrownGreenRieffel1977}. As the surjection $Z_k\to Z_{k-1}$ preserves all the structures of the imprimitivity bimodules, we obtain a surjective $*$-homomorphism $L_k\to L_{k-1}$ making the following diagram commute
\[
\xymatrix{
C_k \ar@{->>}[d] \ar@^{(->}[r] & L_k \ar@{->>}[d] & F_k \ar@{->>}[d] \ar@_{(->}[l]\\
C_{k-1} \ar@{^(->}[r] & L_{k-1} & F_{k-1} \ar@{_(->}[l]
}
\]
Brown's stabilisation theorem \cite{Brown1977} now yields isomorphisms $C_k\otimes \CK\stackrel{\cong}{\longrightarrow} F_k\otimes \CK$ making the following diagram of cofiltrations commute
\[
\xymatrix{
(\CC_0(\IR^n)\otimes A)\rtimes_{\sigma\otimes\alpha} \IZ^n \otimes \CK \ar@{->>}[r] \ar[d]^\cong & C_{n-1}\otimes \CK \ar[d]^\cong \ar@{->>}[r] & \cdots \ar@{->>}[r] & A\otimes \CK \ar[d]^\cong \\
\CM_\alpha(A)\otimes \CK\ar@{->>}[r] & F_{n-1}\otimes \CK \ar@{->>}[r] & \cdots \ar@{->>}[r] & A\otimes \CK
}
\]
Hence, the cofiltration preserving isomorphism $(\CC_0(\IR^n)\otimes A)\rtimes_{\sigma\otimes\alpha} \IZ^n\stackrel{\cong}{\longrightarrow} \CM_\alpha(A)\otimes \CK$ induces an isomorphism between the two spectral sequences associated with \eqref{cofiltration K-homology} and \eqref{cofiltrationMappingTorus}, see also Appendix \ref{Appendix}.
\end{proof}

In the following, we shall use this identification between these two spectral sequences without further mentioning.

Let us also mention that $K_*(\CM_\alpha(A))$ and $K_{*+n}(A\rtimes_\alpha \IZ^n)$ are isomorphic even if $A$ is not necessarily separable. This can be seen by combining a result by Olesen and Pedersen \cite[Theorem 2.4]{OlesenPedersen1986} with Takai duality \cite{Takai1975} and Connes' Thom isomorphism \cite{Connes1981}. A complete proof for this well-known result can be found in the author's doctoral thesis \cite[Section 1.2]{Barlak2014}. For this reason, we will in the following not restrict to separable \cstar-algebras. However, to keep notation simple, we will always use the term Baum-Connes spectral sequence, even if the underlying \cstar-algebra is not separable.  Given a $\IZ^n$-action, we shall, if not stated otherwise, denote by $(E_k,d_k)_{k\geq 1}$ the associated Baum-Connes spectral sequence.


\section{The differentials of the Baum-Connes spectral sequence}
\label{Section differentials}
\noindent
We shall start this section by examining the $E_1$-term of the Baum-Connes spectral sequence. For this, we have to introduce some further notation.

\begin{nota}
Let $n\in \IN$. If $k\leq l\leq n$ and if the underlying set of $\mu\in T(k,n)$ is contained in $\nu\in T(l,n)$, then we write $\mu\subseteq\nu$. In this situation, we define $\nu\setminus\mu\in T(l-k,n)$ as the unique element whose underlying set is the set difference of the underlying sets of $\nu$ and $\mu$. For $\mu\in T(k,n)$, let $\mu^\bot\in T(n-k,n)$ be the unique element disjoint to $\mu$. 

For $k=1,\ldots,n$, let $\Lambda^{k}(\IZ^n)$ be the $k$-th component of the exterior algebra over $\IZ^n$. We use the convention that $\Lambda^0(\IZ^n)=\IZ$ and $\Lambda^k(\IZ^{n})=0$ whenever $k<0$ or $k>n$. We write $e_\mu:=e_{\mu_1}\wedge\cdots\wedge e_{\mu_k}\in\Lambda^k(\IZ^n)$ for $\mu\in T(k,n)$. If $\mu\in T(0,n)$ is the empty tuple, then we define $e_\mu:=1$. We also agree on the convention that $e_\mu\wedge 1=1\wedge e_\mu =e_\mu$.
\end{nota}

\begin{rem}
The set $\left\lbrace e_\mu\ :\ \mu\in T(k,n)\right\rbrace$ defines a $\IZ$-basis for $\Lambda^k(\IZ^n)$. Hence, if we equip $T(k,n)$ with the lexicographical ordering, then the natural order-preserving bijection $T(k,n)\cong \left\lbrace 1,\ldots,\binom{n}{k}\right\rbrace$ yields a group isomorphism $\Lambda^k(\IZ^n)\cong \IZ^{\binom{n}{k}}$. Observe that this isomorphism exists for all $k\in \IZ$ since $\binom{n}{k}=0$ whenever $k<0$ or $k>n$. We will use these identifications throughout this section.
\end{rem}

Let $A$ be a \cstar-algebra, $\alpha:\IZ^n\curvearrowright A$ an action and let $(E_k,d_k)_{k\geq 1}$ denote the Baum-Connes spectral sequence for $\alpha$. Recall the mapping torus cofiltration \eqref{cofiltrationMappingTorus} associated with $\alpha$. We trivially extend this cofiltration by setting $F_p:=F_n$ for $p>n$, $F_p:=0$ for $p<-1$, and $\pi_p:=\id_{F_p}$ in either case. The ideal $I_p:=\op{ker}(\pi_p)$ gives rise to a short exact sequence
\[
\xymatrix{
0 \ar[r] & I_p \ar[r]^{\iota_p} & F_p \ar[r]^/-0.1cm/{\pi_p} & F_{p-1}\ar[r] & 0,
}
\]
whose associated boundary map is denoted by $\rho^{(p)}_*:K_*(F_{p-1})\to K_{*+1}(I_p)$.  We shall refer to $F_p$ as the \emph{$p$-skeleton} of $\CM_\alpha(A)$.

For $1\leq p\leq n$ and $\mu\in T(p,n)$, let $\alpha(\mu)$ denote the $\IZ^p$-action generated by $\alpha_{\mu_1},\ldots,\alpha_{\mu_p}$. Consider the associated mapping torus $\CM_{\alpha(\mu)}(A)$ and let $F(\mu)_k$ denote the $k$-skeleton of the respective cofiltration \eqref{cofiltrationMappingTorus}. Furthermore, we write $(E(\mu)_k,d(\mu)_k)_{k\geq1}$ for the spectral sequence associated with $(A,\alpha(\mu),\IZ^p)$. We obtain $\CM_{\alpha(\mu)}(A)$ as the quotient of $F_p$ under the surjective $*$-homomorphism induced by the restriction to the closed subset
\[
X(\mu):=\left\lbrace t\in [0,1]^n\ :\ t_{\mu^{\bot}_1}=\ldots=t_{\mu^{\bot}_{n-p}}=0\right\rbrace\subset X_p.
\]
This surjection fits into a commutative diagram
\begin{align}
\label{restrictionMappingTorus}
\begin{xy}
	\xymatrix{
	0 \ar[r] & I_p	\ar[r] \ar[d]^{\chi(\mu)} & F_p \ar[r] \ar@{->>}[d] & F_{p-1} \ar[r] \ar@{->>}[d]^{\pi(\mu)} & 0\\
	0 \ar[r] & S^p A \ar[r] & \CM_{\alpha(\mu)}(A) \ar[r] & F(\mu)_{p-1} \ar[r] & 0
	}
\end{xy}
\end{align}
and we write $\rho(\mu)_*:K_*(F(\mu)_{p-1})\to K_{*+1}(S^p A)$ for the boundary map associated with the lower row extension. Since 
\[
X_p=\bigcup\limits_{\mu\in T(p,n)}X(\mu),
\]
we conclude from diagram \eqref{restrictionMappingTorus} that $F_p$ is an iterative pullback of the $\binom{n}{p}$ many mapping tori of the natural $\IZ^p$-subactions glued together over the $\binom{n}{p-1}$ many mapping tori of the natural $\IZ^{p-1}$-subactions. This also shows that the $*$-homomorphism
\begin{align}
\label{identificationMultipleSuspensions}
\chi=(\chi(\mu))_{\mu\in T(p,n)}:I_p\stackrel{\cong}{\longrightarrow} (S^pA)^{\binom{n}{p}}
\end{align}
is an isomorphism. With the convention that $S^0A:=A$, the $E_1$-term is therefore given by
\[
E_1^{p,q}:=K_{p+q}(I_p)\cong  
\begin{cases}
K_{p+q}(S^pA)\otimes_\IZ \Lambda^p(\IZ^n) &,\ \text{for}\ 0\leq p\leq n,\\
0 &,\ \text{for}\ p<0\ \text{and}\ p>n.
\end{cases}
\]

We proceed with a description of the differentials $d_k$ in terms of the boundary maps $\rho(\mu)_*$. Observe that for a $\IZ^n$-action, the associated differential $d^{p,q}_k:E_k^{p,q}\to E_k^{p+k,q-k+1}$ can only be non-trivial if $0\leq p, p+k\leq n$, see also Appendix \ref{Appendix}. 

\begin{lemma}
\label{generalDescriptionDifferentials}
Assume that $0\leq p\leq n$ and $1\leq k\leq n-p$. Let $g\in E_{k}^{p,q}$ be represented by $x\in K_{p+q}(S^p A)\otimes_\IZ \Lambda^{p}(\IZ^{n})$. Let $y\in K_{p+q}(F_{p+k-1})$ be a lift for $K_{p+q}(\iota_p)(x)\in K_{p+q}(F_p)$ under the map induced by the surjection $F_{p+k-1}\to F_p$. For $\mu\in T(p+k,n)$, set 
\[
y_{\mu}:=K_{p+q}(\pi(\mu))(y)\in K_{p+q}(F(\mu)_{p+k-1}).
\]
Then the differential $d_k^{p,q}:E_k^{p,q}\to E_k^{p+k,q-k+1}$ satisfies
\[
d_k^{p,q}(g)=\left[\sum\limits_{\mu\in T(p+k,n)}\rho(\mu)_{p+q}(y_\mu)\otimes e_\mu\right]\in E_k^{p+k,q-k+1}.
\]
\end{lemma}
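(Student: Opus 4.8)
The plan is to recognise the asserted formula as the concrete form of the standard expression $d_k=j\circ i^{-(k-1)}\circ k$ for the higher differentials of the exact couple attached to the cofiltration \eqref{cofiltrationMappingTorus}, and then to split the resulting boundary map into its components by naturality.

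First I would set up the exact couple as in Appendix \ref{Appendix}, from the $K$-theory six-term sequences of the extensions $0\to I_p\to F_p\to F_{p-1}\to 0$. With $D_1^{p,q}=K_{p+q}(F_p)$ and $E_1^{p,q}=K_{p+q}(I_p)$, the structure maps are $i=K_*(\pi_p)$ of bidegree $(-1,1)$, $k=K_*(\iota_p)$ of bidegree $(0,0)$, and $j=\rho^{(p)}_*$ of bidegree $(1,0)$, so that $d_k=j\circ i^{-(k-1)}\circ k$ has bidegree $(k,1-k)$, matching $d_k^{p,q}\colon E_k^{p,q}\to E_k^{p+k,q-k+1}$. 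Unwinding this recipe for a class $g$ represented by $x$: applying $k$ gives $K_{p+q}(\iota_p)(x)\in K_{p+q}(F_p)$; the hypothesis that $g$ survives to the $k$-th page is precisely that this element lies in the image of $i^{k-1}\colon K_{p+q}(F_{p+k-1})\to K_{p+q}(F_p)$, the map induced by the surjection $F_{p+k-1}\to F_p$, so a lift $y$ as in the statement exists; and applying $j=\rho^{(p+k)}_*$ to $y$ produces a representative of $d_k^{p,q}(g)$ in $E_k^{p+k,q-k+1}$, viewed as a subquotient of $E_1^{p+k,q-k+1}=K_{p+q+1}(I_{p+k})$. Independence of the formula from the choices of $x$ and $y$ is part of the general theory of derived couples, so it remains only to compute $\rho^{(p+k)}_{p+q}(y)$ under the isomorphism \eqref{identificationMultipleSuspensions}.

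The second step decomposes this boundary map componentwise via diagram \eqref{restrictionMappingTorus}, read with $p$ replaced by $p+k$. For each $\mu\in T(p+k,n)$ that diagram presents $\chi(\mu)$ and $\pi(\mu)$ as a morphism of extensions from $0\to I_{p+k}\to F_{p+k}\to F_{p+k-1}\to 0$ to $0\to S^{p+k}A\to \CM_{\alpha(\mu)}(A)\to F(\mu)_{p+k-1}\to 0$. Naturality of the $K$-theoretic boundary map then yields the identity $K_{p+q+1}(\chi(\mu))\circ\rho^{(p+k)}_{p+q}=\rho(\mu)_{p+q}\circ K_{p+q}(\pi(\mu))$, so that the $\mu$-component of $\rho^{(p+k)}_{p+q}(y)$ is exactly $\rho(\mu)_{p+q}(y_\mu)$. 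Since $\chi=(\chi(\mu))_{\mu\in T(p+k,n)}$ is the isomorphism \eqref{identificationMultipleSuspensions}, collecting these components and rewriting the index $\mu$ through the basis $\set{e_\mu}$ of $\Lambda^{p+k}(\IZ^n)$ gives $\rho^{(p+k)}_{p+q}(y)=\sum_{\mu\in T(p+k,n)}\rho(\mu)_{p+q}(y_\mu)\otimes e_\mu$, which is the claim.

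I expect the one genuinely delicate point to be the bookkeeping: ensuring that the direct-sum decomposition supplied by $\chi$ is matched with the basis $\set{e_\mu : \mu\in T(p+k,n)}$ of $\Lambda^{p+k}(\IZ^n)$ compatibly with the identification $E_1^{p+k,\bullet}\cong K_\bullet(S^{p+k}A)\otimes_\IZ\Lambda^{p+k}(\IZ^n)$, while keeping the degree shifts straight through all the suspensions. The remaining ingredients — the abstract differential formula for an exact couple and the naturality of $\rho$ — are routine once the indexing conventions have been fixed.
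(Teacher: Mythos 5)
Your proposal is correct and follows essentially the same route as the paper: it invokes the exact-couple description of $d_k$ (the paper's Remark \ref{inductive definition differentials}) to reduce the claim to computing $\rho^{(p+k)}_{p+q}(y)$, and then uses naturality of the $K$-theoretic boundary map applied to the morphisms of extensions in diagram \eqref{restrictionMappingTorus}, together with the identification \eqref{identificationMultipleSuspensions}, to read off the $\mu$-components as $\rho(\mu)_{p+q}(y_\mu)$. The bidegree bookkeeping and the matching of the direct-sum decomposition with the basis $\set{e_\mu}$ are handled exactly as in the paper's argument.
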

\begin{proof}
The definition of $(E_k,d_k)_{k\geq1}$ in terms of exact couples yields that $d_{k}^{p,q}(g)$ is given by the class of $\rho_{p+q}(y)$ in $E_{k}^{p+k,q-k+1}$, where 
\[
\rho_{p+q}:K_{p+q}(F_{p+k-1})\to K_{p+q+1}(I_{p+k})
\]
is the boundary map associated with the surjection $\pi_{p+k}:F_{p+k}\to F_{p+k-1}$, see also Remark \ref{inductive definition differentials}. For every $\mu\in T(p+k,n)$, the respective diagram \eqref{restrictionMappingTorus} together with the identification \eqref{identificationMultipleSuspensions} yields a commutative diagram
\[
\xymatrix@R+0.1cm{
K_*(F_{p+k-1}) \ar[r]^/-0.5cm/{\rho_*} \ar[d]_{K_{*}(\pi(\mu))} & K_{*+1}(S^{p+k}A)\otimes_\IZ \Lambda^p(\IZ^n) \ar[d]^{\op{pr}_\mu}\\
			K_{*}(F(\mu)_{p+k-1}) \ar[r]^{\rho(\mu)_{*}} & K_{*+1}(S^{p+k}A)
	}
\]
where $\op{pr}_\mu$ is the canonical projection onto the coordinate labelled by $\mu$. Thus,
\[\def\arraystretch{2.4}
\begin{array}{lcl}
d_{k}^{p,q}(g) & = & [\rho_{p+q}(y)]\\
 & = & \left[\sum\limits_{\mu\in T(p+k,n)}(\rho(\mu)_{p+q}\circ K_{p+q}(\pi(\mu)))(y)\otimes e_\mu\right]\\
 & = & \left[\sum\limits_{\mu\in T(p+k,n)}\rho(\mu)_{p+q}(y_\mu)\otimes e_\mu\right]\in E_k^{p+k,q-k+1}.
\end{array}
\]
\end{proof}

In \cite[Theorem 2]{SavignenBellissard2009}, Savignen and Bellissard define the \emph{Pimsner-Voiculescu complex} $(C_{PV},d_{PV})$ as
\[\begin{array}{lr}
C^{p,q}_{PV}:= K_{q}(A)\otimes_{\IZ}\Lambda^{p}(\IZ^{n}),& \\
d^{p,q}_{PV}:C_{PV}^{p,q}\to C_{PV}^{p+1,q}, & x\otimes e\mapsto \sum\limits_{k=1}^{n} (K_{q}(\alpha_k)-\id)(x)\otimes(e\wedge e_k)
\end{array}
\]
for $p,q\in \IZ$. Observe that Bott periodicity allows us to identify $E_1\cong C_{PV}$, which we shall do for the remainder of this section. They point out that this isomorphism actually intertwines the differentials $d_1$ and $d_{PV}$, so that the $E_2$-term is obtained as the cohomology of $(C_{PV},d_{PV})$. Moreover, they explicitly prove this for special situations they are interested in.

In the following we shall give a complete proof for the identification of $(E_1,d_1)$ with the Pimsner-Voiculescu complex. This turns out to be a consequence of this section's technical main result:

\begin{theorem}
\label{partialDescriptionDifferentials}
Let $0\leq p\leq n$, $1\leq k\leq n-p$, and $\mu\in T(p,n)$. Assume that $x\in K_q(A)$ represents an element $[x]\in E(\mu^\bot)^{0,q}_k$. Then $x\otimes e_\mu\in K_q(A)\otimes_\IZ\Lambda^p(\IZ^n)$ represents an element $[x\otimes e_\mu]\in E_k^{p,q}$. Its image under the differential $d_k^{p,q}:E_{k}^{p,q}\to E_{k}^{p+k,q-k+1}$ is given as follows. Let $y\in K_{q}(F(\mu^{\bot})_{k-1})$ be a lift for $x\in K_q(A)$ under the map induced by the surjection $F(\mu^{\bot})_{k-1}\to F(\mu^\bot)_0=A$. For $\lambda\in T(k,n)$, we set $y_\lambda\in K_q(F(\lambda)_{k-1})$ to be the image of $y$ under $K_q(F(\mu^\bot)_{k-1})\to K_q(F(\lambda)_{k-1})$ if $\lambda\subseteq \mu^{\bot}$, and zero otherwise. Then
\[
d_{k}^{p,q}([x\otimes e_\mu])=\left[\sum\limits_{\lambda\in T(k,n)}\rho(\lambda)_q(y_\lambda)\otimes(e_\mu\wedge e_\lambda)\right]\in E_k^{p+k,q-k+1}.
\]
\end{theorem}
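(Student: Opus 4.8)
The plan is to deduce Theorem \ref{partialDescriptionDifferentials} from the already-established Lemma \ref{generalDescriptionDifferentials} by carefully matching up the ingredients. The key conceptual point is that the general lemma computes $d_k^{p,q}$ using an arbitrary lift $y \in K_{p+q}(F_{p+k-1})$ of the image of $x \otimes e_\mu$ in $K_{p+q}(F_p)$, and then applies the restriction maps $K_*(\pi(\nu))$ for $\nu \in T(p+k,n)$. To prove the theorem, I would show that the lift $y \in K_q(F(\mu^\bot)_{k-1})$ furnished by the hypothesis (coming from the subaction $\alpha(\mu^\bot)$) can be fed into this machinery. So the first step is to fix, via suitable Bott-periodicity identifications, the precise relationship between the skeleta $F(\mu^\bot)_{k-1}$ of the $\IZ^{n-p}$-subaction and the skeleta $F_{p+k-1}$ of the full mapping torus, and between the index maps $\rho(\lambda)_*$ appearing in both settings.

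**The main steps.**

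First I would observe that the restriction of the full cofiltration to the face $X(\mu) \cong [0,1]^{n-p}$ realises $\CM_{\alpha(\mu^\bot)}(A)$ as a quotient of $F_p$, compatibly with skeleta, so that there is a natural map $F(\mu^\bot)_{k-1} \to$ (the relevant subquotient of) $F_{p+k-1}$; pushing the given lift $y$ forward along the section-compatible maps gives a genuine lift in $K_{p+q}(F_{p+k-1})$ of the image of $x \otimes e_\mu$ (up to the Bott-periodicity shift from $S^p A$). Second, for each $\nu \in T(p+k,n)$ I would compute $y_\nu = K_{p+q}(\pi(\nu))(y)$ using the commutative diagram \eqref{restrictionMappingTorus}: the crucial combinatorial observation is that $\pi(\nu)$ restricted to the face $X(\mu)$ is nonzero precisely when the face $X(\nu)$ meets $X(\mu)$, which forces $\mu \subseteq \nu$, and in that case $\nu = \mu \sqcup \lambda$ for a unique $\lambda \in T(k,n)$ with $\lambda \subseteq \mu^\bot$. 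For all other $\nu$ the corresponding $y_\nu$ vanishes, which matches the theorem's convention that $y_\lambda = 0$ unless $\lambda \subseteq \mu^\bot$. Third, for those surviving $\nu = \mu \sqcup \lambda$, I would identify the boundary map $\rho(\nu)_{p+q}$ of the full action with $\rho(\lambda)_q$ of the subaction under the Bott identifications, since both are the index map of the top face extension for the same iterated suspension coordinate. Substituting into the formula of Lemma \ref{generalDescriptionDifferentials} and reindexing the sum over $\nu \in T(p+k,n)$ with $\mu \subseteq \nu$ by $\lambda = \nu \setminus \mu \in T(k,n)$, while noting $e_\nu = \pm\, e_\mu \wedge e_\lambda$, yields the stated expression $\sum_\lambda \rho(\lambda)_q(y_\lambda) \otimes (e_\mu \wedge e_\lambda)$.

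**The main obstacle.**

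The routine parts are the diagram chases and the reindexing; the genuinely delicate step is verifying that the hypothesis ``$x$ represents a class in $E(\mu^\bot)^{0,q}_k$'' guarantees the existence of a lift $y$ that survives far enough up the \emph{full} cofiltration, i.e. that a lift to $F(\mu^\bot)_{k-1}$ in the subaction really does produce a valid lift to $F_{p+k-1}$ compatible with all the restriction maps simultaneously. Concretely, one must check that the image of $y$ in $K_{p+q}(F_p)$ agrees with $K_{p+q}(\iota_p)(x \otimes e_\mu)$, which requires tracking the Bott-periodicity isomorphisms and the identification \eqref{identificationMultipleSuspensions} through the inclusion of the face. I expect the sign bookkeeping for $e_\nu$ versus $e_\mu \wedge e_\lambda$ to be the most error-prone, but the conceptual heart of the argument is this lifting-compatibility claim, which is precisely what the subaction hypothesis $[x] \in E(\mu^\bot)^{0,q}_k$ is engineered to supply.
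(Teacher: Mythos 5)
Your proposal is correct and follows essentially the same route as the paper: both deduce the theorem from Lemma \ref{generalDescriptionDifferentials} by transporting the lift $y\in K_q(F(\mu^\bot)_{k-1})$ into the full cofiltration via the ($p$-fold suspended) inclusion of the open $\mu$-cell, observing that only the components indexed by $\nu\in T(p+k,n)$ with $\mu\subseteq\nu$ survive, and cancelling the sign in $e_\nu=\pm\, e_\mu\wedge e_{\nu\setminus\mu}$ against the sign of the coordinate-permuting identification $S^p(S^kA)\cong S^{p+k}A$. The paper handles what you call the main obstacle more cleanly by noting that this inclusion is a cofiltration-preserving $*$-homomorphism $\iota:S^p\CM_{\alpha(\mu^\bot)}(A)\into\CM_\alpha(A)$ and hence induces a morphism of spectral sequences with $E_k(\iota)([x])=[x\otimes e_\mu]$, so the lifting compatibility and the identity $d_k^{p,q}\circ E_k(\iota)=E_k(\iota)\circ\tilde{d}_k^{p,q}$ come for free from naturality rather than from a hands-on diagram chase.
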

\begin{proof}
Let $\sigma:\set{1,\ldots,n}\stackrel{\cong}{\longrightarrow}\set{1,\ldots,n}$ be the permutation given by
\[
\sigma^{-1}(l):=\begin{cases}
 \mu_{l} &\quad ,\ \text{if}\ 1\leq l\leq p,\\
\mu^\bot_{l-p} &\quad ,\ \text{if}\ p+1\leq l\leq n.
\end{cases}
\]
Consider the injective $*$-homomorphism
\[\def\arraystretch{1.4}
\begin{array}{l}
\iota:\CC_0((0,1)^{p}\times [0,1]^{n-p},A)\into \CC([0,1]^{n},A),\\
\iota(f)(t_1,\ldots,t_{n})=f(t_{\sigma(1)},\ldots,t_{\sigma(n)}),
\end{array}
\]
which gives rise to the following commutative diagram of cofiltrations
\begin{align}
\label{cofiltrationsSuspendedSystem}
\begin{xy}
	\xymatrix@C-0.15cm{
	S^p\CM_{\alpha(\mu^\bot)} \ar@{->>}[r] \ar@{^(->}[d]^\iota & S^p F(\mu^\bot)_{n-p-1} \ar@{->>}[r] \ar@{^(->}[d]^{\iota_{n-p-1}} & \cdots \ar@{->>}[r]  & S^p A \ar@{->>}[r] \ar@{^(->}[d]^{\iota_0} & 0 \ar[r] \ar[d] & \cdots \ar[r] & 0 \ar[d]\\
	\CM_\alpha(A)\ar@{->>}[r] & F_{n-1} \ar@{->>}[r] & \cdots \ar@{->>}[r] & F_{p} \ar@{->>}[r] & F_{p-1} \ar@{->>}[r] & \cdots \ar@{->>}[r] & A
	}
\end{xy}
\end{align}

Write $(\tilde{E}_k,\tilde{d}_k)_{k\geq1}$ for the spectral sequence associated with the upper row cofiltration after having applied Bott periodicity. Observe that $\iota$ gives rise to a morphism of spectral sequences $(E_k(\iota):\tilde{E_k}\to E_k)_{k\geq1}$. By construction, it holds that $E_k(\iota)([x])=\left[x\otimes e_\mu\right]\in E_k^{p,q}$.

Let $J_k:=\ker(S^pF(\mu^\bot)_k\to S^pF(\mu^\bot)_{k-1})$. For every $\nu\in T(p+k,n)$ with $\mu\subseteq \nu$, \eqref{cofiltrationsSuspendedSystem} gives rise to a commutative diagram with exact rows

\[
\xymatrix@C-0.8cm{
	J_k \ar@{^(->}[rr] \ar@{^(->}[dd]_/-0.3cm/{\eta} \ar@{->>}[rd] && S^pF(\mu^\bot)_k \ar@{->>}[rr] \ar@{^(->}[dd]|(0.5)\hole_/-0.6cm/{\iota_k} \ar@{->>}[rd] && S^pF(\mu^\bot)_{k-1} \ar@{^(->}[dd]|(0.5)\hole_/-0.6cm/{\iota_{k-1}} \ar@{->>}[rd] & \\	 	
	 	& S^p(S^k A) \ar@{^(->}[rr] \ar@{^(->}[dd]^/-0.5cm/{\eta(\nu)} && S^p\CM_{\alpha(\nu\setminus\mu)}(A) \ar@{->>}[rr] \ar@{^(->}[dd]^/-0.5cm/{\kappa} & & S^pF(\nu\setminus\mu)_{k-1} \ar@{^(->}[dd] \\
		I_{p+k} \ar@{^(->}[rr]|(0.42)\hole \ar@{->>}[rd] && F_{p+k} \ar@{->>}[rd] \ar@{->>}[rr]|(0.48)\hole && F_{p+k-1} \ar@{->>}[rd] &&\\
		 &	S^{p+k}A \ar@{^(->}[rr] && \CM_{\alpha(\nu)}(A) \ar@{->>}[rr] && F(\nu)_{p+k-1} &
	}
\]
As observed before, the maps
\[
J_k\cong (S^p(S^k A))^{\binom{n-p}{k}}\to S^p(S^k A)\quad \text{and}\quad
I_{p+k}\cong (S^{p+k}A)^{\binom{n}{p+k}}\to S^{p+k}A
\]
are the canonical surjections onto the coordinate labelled by $\nu\setminus\mu$ and $\nu$, respectively.
Let $\sigma(\nu):\set{1,\ldots,p+k}\stackrel{\cong}{\longrightarrow}\set{1,\ldots,p+k}$ be the permutation given by
\[
\sigma(\nu)^{-1}(l):=\begin{cases}
\mu_l &\quad ,\ \text{if}\ 1\leq l\leq p,\\
(\nu\setminus \mu)_{l-p} &\quad ,\ \text{if}\ p+1\leq l\leq p+k. \\

\end{cases}
\]
As above, $\kappa: S^p\CM_{\alpha(\nu\setminus\mu)}(A)\to\CM_{\alpha(\nu)}(A)$ is induced by the injective $*$-ho\-mo\-mor\-phism
\[\def\arraystretch{1.4}
\begin{array}{l}
\kappa:\CC_0((0,1)^p\times [0,1]^{p+k},A)\to \CC([0,1]^n,A),\\
\kappa(f)(t_1,\ldots,t_n)=f(t_{\sigma(\nu)(1)},\ldots,t_{\sigma(\nu)(n)}).
\end{array}
\]
Hence, by using  the canonical isomorphism $S^p(S^kA)\cong S^{p+k} A$, we see that the $*$-automorphism $\eta(\nu):S^{p+k}A\stackrel{\cong}{\longrightarrow} S^{p+k}A$ is induced by the homeomorphism of $\IR^{p+k}$ which permutes the coordinates via $\sigma(\nu)$. It therefore follows that $K_*(\eta(\nu))=\op{sgn}(\sigma(\nu))\cdot \id$. Observe that this also shows that
\[
K_*(\eta):K_*(S^{p+k}A)\otimes_\IZ \Lambda^k(\IZ^{n-p})\to K_*(S^{p+k}A)\otimes_\IZ \Lambda^{p+k}(\IZ^n)
\]
is injective. Using Lemma \ref{generalDescriptionDifferentials} and the fact that 
\[
e_{\nu}=\op{sgn}(\sigma(\nu)) \cdot e_{\mu}\wedge e_{\nu\setminus \mu}\in \Lambda^{p+k}(\IZ^{n}), 
\]
we conclude that 
\[\def\arraystretch{3}
\begin{array}{lcl}
d_k^{p,q}([x\otimes e_\mu])  & = & d_k^{p,q}(E_k^{p,q}(\iota)([x]))\\
 & = & E_k^{p,q}(\iota)(\tilde{d}_{k}^{p,q}([x]))\\
 & = & E_k^{p,q}(\iota)\left(\left[\sum\limits_{\begin{smallmatrix}\lambda\in T(k,n):\\\lambda\subseteq \mu^\bot\end{smallmatrix}}\rho(\lambda)_q(y_\lambda)\otimes e_{\lambda}\right]\right)\\
  & = & \left[K_{q+1}(\eta)\left(\sum\limits_{\begin{smallmatrix}\lambda\in T(k,n):\\\lambda\subseteq \mu^\bot\end{smallmatrix}}\rho(\lambda)_q(y_\lambda)\otimes e_{\lambda}\right)\right]\\
 & = & \left[\sum\limits_{\begin{smallmatrix}\nu\in T(p+k,n):\\\mu\subseteq \nu\end{smallmatrix}}K_{q+1}(\eta(\nu))(\rho(\nu\setminus \mu)_q(y_{\nu\setminus \mu}))\otimes e_\nu\right]\\
 & = & \left[\sum\limits_{\lambda\in T(k,n)}\rho(\lambda)_q(y_\lambda)\otimes (e_\mu\wedge e_\lambda)\right]\in E_k^{p+k,q-k+1}.
\end{array}
\]
\end{proof}

\begin{cor}
The isomorphism $E_1\cong C_{PV}$ intertwines the differentials $d_1$ and $d_{PV}$.
\end{cor}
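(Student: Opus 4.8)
The plan is to read this off Theorem \ref{partialDescriptionDifferentials} in the special case $k=1$. Since $d_1$ and $d_{PV}$ are both homomorphisms of abelian groups and the elementary tensors $x\otimes e_\mu$, with $x\in K_q(A)$ and $\mu\in T(p,n)$, generate $E_1^{p,q}\cong K_q(A)\otimes_\IZ\Lambda^p(\IZ^n)$, it suffices to check that the two differentials agree on every such generator. The boundary case $p=n$ is trivial, as both differentials vanish there (the target $E_1^{n+1,q}$ is zero, and $e_\mu\wedge e_\lambda=0$ for the top tuple $\mu$), so we may assume $p\leq n-1$ and apply the theorem with $k=1$. Note also that the hypothesis of the theorem is vacuous on the first page: since $E(\mu^\bot)_1^{0,q}=K_q(A)\otimes_\IZ\Lambda^0(\IZ^{n-p})=K_q(A)$, every $x\in K_q(A)$ represents an element $[x]\in E(\mu^\bot)_1^{0,q}$.

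Next I would specialise the formula of Theorem \ref{partialDescriptionDifferentials} to $k=1$, where all the lifting becomes trivial. Indeed $F(\mu^\bot)_{k-1}=F(\mu^\bot)_0=A$ and the relevant surjection is the identity, so one may take $y=x$; likewise, for $\lambda\in T(1,n)=\set{1,\dots,n}$ one has $F(\lambda)_0=A$ and the restriction $K_q(F(\mu^\bot)_0)\to K_q(F(\lambda)_0)$ is the identity on $K_q(A)$. Hence $y_\lambda=x$ whenever $\lambda\subseteq\mu^\bot$ and $y_\lambda=0$ otherwise, and the theorem gives
\[
d_1^{p,q}([x\otimes e_\mu])=\left[\sum_{\begin{smallmatrix}\lambda\in T(1,n):\\\lambda\subseteq\mu^\bot\end{smallmatrix}}\rho(\lambda)_q(x)\otimes(e_\mu\wedge e_\lambda)\right]\in E_1^{p+1,q}.
\]

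The crux, and the only genuine computation, is to identify the boundary map $\rho(\lambda)_q$ for a single automorphism with the Pimsner-Voiculescu connecting homomorphism. For the $\IZ$-subaction generated by $\alpha_\lambda$ the mapping torus cofiltration reduces to the extension $0\to SA\to\CM_{\alpha_\lambda}(A)\to A\to 0$, and $\rho(\lambda)_q$ is exactly its connecting homomorphism $K_q(A)\to K_{q+1}(SA)$. Under the Bott periodicity identification $K_{q+1}(SA)\cong K_q(A)$ used to pass from $E_1$ to $C_{PV}$, this is the classical mapping-torus computation and equals $K_q(\alpha_\lambda)-\id$; this is precisely the instance of the Pimsner-Voiculescu sequence recovered from the assembly map that is mentioned in the introduction. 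I expect the main obstacle to be pinning down the Bott orientation and sign so that one obtains $K_q(\alpha_\lambda)-\id$ rather than its negative, a point that is fixed once and for all by the conventions entering the construction of the cofiltration. Substituting this identification and then dropping the constraint $\lambda\subseteq\mu^\bot$ (for an index $\lambda$ occurring in $\mu$ one has $e_\mu\wedge e_\lambda=0$, so the additional terms vanish) yields
\[
d_1^{p,q}([x\otimes e_\mu])=\sum_{\lambda=1}^{n}(K_q(\alpha_\lambda)-\id)(x)\otimes(e_\mu\wedge e_\lambda),
\]
which is exactly $d_{PV}(x\otimes e_\mu)$. As the elementary tensors generate $E_1^{p,q}$, this proves $d_1=d_{PV}$ under the identification $E_1\cong C_{PV}$.
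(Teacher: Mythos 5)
Your proposal is correct and follows essentially the same route as the paper: specialise Theorem \ref{partialDescriptionDifferentials} to $k=1$, observe that the hypotheses are automatic and the lifting is trivial on the first page, and identify $\rho(\lambda)_*$ with $K_*(\alpha_\lambda)-\id$ via the mapping torus extension $0\to SA\to\CM_{\alpha_\lambda}(A)\to A\to 0$ and Bott periodicity. The extra care you take with the boundary case $p=n$ and with dropping the constraint $\lambda\subseteq\mu^\bot$ is sound but is exactly what the paper's proof does implicitly.
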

\begin{proof}
If $k=1$, then the assumptions in Theorem \ref{partialDescriptionDifferentials} are satisfied for every $\mu\in T(1,n)$ and $x\in K_q(A)$. For each $\lambda\in T(1,n)$, $\rho(\lambda)_*$ is the boundary map associated with the six-term exact sequence of the mapping torus extension
\[
\xymatrix{
0 \ar[r] & SA \ar[r] & \CM_{\alpha_\lambda}(A) \ar[r]^/0.25cm/{\op{ev}_0} & A \ar[r] & 0.
	}
\]
Hence, by Bott periodicity, we obtain $\rho(\lambda)_* = K_*(\alpha_\lambda)-\id$, and Theorem \ref{partialDescriptionDifferentials} then yields
\[
d_{1}^{p,q}(x\otimes e_\mu)=\sum\limits_{\lambda=1}^{n}(K_q(\alpha_\lambda)-\id)(x)\otimes(e_\mu\wedge e_\lambda).
\]
\end{proof}

In \cite[6.10]{Kasparov1988}, Kasparov points out that the $E_2$-term of the spectral sequence associated with \eqref{cofiltrationMappingTorus} is given by the group cohomology of $\IZ^n$ with values in $K_*(A)$ (where the $\IZ^n$-module structure is induced by $\alpha$). For the reader's convenience, we attach a complete proof for this here. For the definition and important properties of group cohomology, the reader is referred to \cite{Brown1982}.

Let $R:=\IZ[\IZ^n]$ be the integral group ring of $\IZ^n$ and $t_i\in \IZ^n\subseteq R$ denote the $i$-th canonical basis element for $i=1,\ldots,n$. The \emph{Koszul complex} $(G_*,g_*)$ associated with the finite sequence $t_1-1,\ldots,t_n-1\in R$ is the $\IZ$-graded $R$-complex given by
\[
\begin{array}{lr}
G_p:=\Lambda^p(R^n), & \\
g_p:G_p\to G_{p-1},& g_p(e_\mu):=\sum\limits_{k=1}^n (-1)^k(t_k-1)e_{\mu\setminus (\mu_k)},
\end{array}
\]
see also \cite[Section 4.5]{Weibel1994}. For fixed $q\in\left\lbrace0,1\right\rbrace$, $\alpha$ induces an $R$-module structure on $K_q(A)$. We pass to the corresponding cohomological Koszul complex  $(G^*,g^*)$ with coefficients in the $R$-module $K_q(A)$
\[\def\arraystretch{1.25}
\begin{array}{l}
G^p:=\op{Hom}_R(G_p,K_q(A)),\\
g^p:G^p\to G^{p+1},\quad g^p(f):=f\circ g_{p+1}.
\end{array}\]
Using the $R$-module isomorphism
\[
G^p\cong \op{Hom}_R(G_p,R)\otimes_R K_q(A)\cong G_p\otimes_R K_q(A)\cong C_{PV}^{p,q},
\]
one can check that the two complexes $(C_{PV}^{*,q},d_{PV}^{*,q})$ and $(G^*,g^*)$ are isomorphic, and hence give rise to the same cohomology groups.

\begin{cor}
Let $A$ be a \cstar-algebra and $\alpha:\IZ^n\curvearrowright A$ an action. Let $(E_k,d_k)_{k\geq1}$ denote the Baum-Connes spectral sequence for $\alpha$. Then the $E_2$-term satisfies
\[
E_2^{p,q}\cong H^p(\IZ^n,K_q(A)),\quad p,q\in\IZ.
\] 
\end{cor}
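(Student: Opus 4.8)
The plan is to assemble the identifications already prepared in the excerpt and reduce the statement to the standard homological-algebra fact that the Koszul complex on the sequence $t_1-1,\dots,t_n-1$ computes the group cohomology of $\IZ^n$.

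First I would invoke the definition of the second page: by the construction of the spectral sequence from an exact couple, $E_2^{p,q}$ is the cohomology of the complex $(E_1^{*,q},d_1^{*,q})$ at the spot $p$, that is $E_2^{p,q}=\ker(d_1^{p,q})/\im(d_1^{p-1,q})$, where $d_1^{p,q}\colon E_1^{p,q}\to E_1^{p+1,q}$. By the preceding Corollary, the isomorphism $E_1\cong C_{PV}$ carries $d_1$ to $d_{PV}$, so this cohomology is $H^p(C_{PV}^{*,q},d_{PV}^{*,q})$. The complex isomorphism $(C_{PV}^{*,q},d_{PV}^{*,q})\cong (G^*,g^*)$ recorded just above then gives
\[
E_2^{p,q}\cong H^p(G^*,g^*).
\]

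The remaining, and only nontrivial, point is to identify $H^p(G^*,g^*)$ with $H^p(\IZ^n,K_q(A))$. For this I would argue that the homological Koszul complex $(G_*,g_*)$ is a free resolution of the trivial $R$-module $\IZ=R/(t_1-1,\dots,t_n-1)$. This is exactly the assertion that $t_1-1,\dots,t_n-1$ forms a regular sequence in $R=\IZ[\IZ^n]\cong \IZ[t_1^{\pm1},\dots,t_n^{\pm1}]$, together with the standard fact that the Koszul complex on a regular sequence is acyclic in positive degrees with $H_0$ equal to the quotient by the ideal it generates, see \cite[Section 4.5]{Weibel1994}. Regularity here is elementary: in the Laurent polynomial ring each $t_i-1$ is a non-zero-divisor modulo the ideal generated by the previous ones, so the sequence is regular. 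Hence $(G_*,g_*)\to\IZ$ is a free resolution of the trivial module.

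By definition, group cohomology is $H^p(\IZ^n,K_q(A))=\ext_R^p(\IZ,K_q(A))$, computed by applying $\Hom_R(-,K_q(A))$ to any projective resolution of $\IZ$ over $R$. Applying this functor to the Koszul resolution reproduces precisely the cohomological Koszul complex $(G^*,g^*)$, whence $H^p(G^*,g^*)=\ext_R^p(\IZ,K_q(A))=H^p(\IZ^n,K_q(A))$. Combined with the display above, this yields the claim. The main obstacle is the verification that the Koszul complex resolves $\IZ$; once regularity of $t_1-1,\dots,t_n-1$ is in hand, every other step is formal.
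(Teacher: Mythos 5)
Your argument is correct and follows essentially the same route as the paper: reduce via the identification $(E_1,d_1)\cong(C_{PV},d_{PV})\cong(G^*,g^*)$ to showing that the Koszul complex on $t_1-1,\dots,t_n-1$ is a free $R$-resolution of the trivial module $\IZ$, which comes down to regularity of that sequence. The only cosmetic difference is that you justify the non-zero-divisor condition directly from the fact that the successive quotients are Laurent polynomial rings (hence integral domains), where the paper cites the absence of zero divisors in $\IZ[\IZ^k]$; both are valid.
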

\begin{proof}
By the definition of group cohomology, we only have to show that the Koszul complex $(G_*,g_*)$ defines a projective $R$-resolution of $\IZ$ (regarded as a trivial module over $R$). For this, it is sufficient to know that the finite sequence $t_1-1,\ldots,t_n-1$ is \emph{regular}, that is, $(t_1-1,\ldots,t_n-1)R\neq R$, and for $i=1,\ldots,n$, the element $t_i-1$ defines a non-zero-divisor in $R/(t_1-1,\ldots,t_{i-1}-1)R$, see \cite[Chapter XXI, Theorem 4.6a)]{Lang1997}. The first condition holds since $(t_1-1,\ldots,t_n-1)R$ is the augmentation ideal, which satisfies
\[
R/(t_1-1,\ldots,t_n-1)R\cong \IZ.
\]
Concerning the second condition, note that for $i=1,\ldots,n$, there is an isomorphism
\[
R/(t_1-1,\ldots,t_{i-1}-1)R\cong \IZ[\IZ^{n-i+1}].
\]
However, for $k\in \IN$, the group ring $\IZ[\IZ^k]$ is known to have no zero-divisors, see \cite{Malcev1948, Neumann1949}.
\end{proof}

Although Theorem \ref{partialDescriptionDifferentials} is very useful, it does not provide a complete description of the differentials $d_k$. First, not every element $g\in E_k^{p,q}$ is decomposable  in the sense that there are $x_\mu\in K_{q}(A)$, indexed by $\mu\in T(p,n)$, such that $[x\otimes e_\mu]\in E_k^{p,q}$ and
\[
g=\sum\limits_{\mu\in T(p,n)}[x_\mu\otimes e_\mu]\in E_k^{p,q}.
\]
However, even if $x\in K_q(A)$ satisfies $[x\otimes e_\mu]\in E_k^{p,q}$ for some $\mu\in T(p,n)$, it is not clear whether $x$ defines an element $[x]\in E(\mu^\bot)_k^{0,q}$. In fact, we do not automatically obtain a lift for the corresponding element $y\in K_{p+q}(S^pA)$ to an element in $K_{p+q}(S^p F(\mu^\bot)_{k-1})$ if we know that $y\otimes e_\mu\in K_{p+q}(I_p)$ lifts to an element in $K_{p+q}(F_{p+k-1})$. Nevertheless, this second problem does not occur for $k=2$.

\begin{cor}
\label{differentialsE2}
Let $0\leq p\leq n-2$ and $\mu\in T(p,n)$. Assume that $x\in K_q(A)$ gives rise to an element $[x\otimes e_\mu]\in E_{2}^{p,q}$. Then $[x]\in E(\mu^\bot)_2^{0,q}$, and with the notation from Theorem \ref{partialDescriptionDifferentials}, it follows that
\[
d_{2}^{p,q}([x\otimes e_\mu])=\left[\sum\limits_{\lambda\in T(2,n)}\rho(\lambda)_q(y_\lambda)\otimes(e_\mu\wedge e_\lambda)\right]\in E_2^{p+2,q-1}.
\]
\end{cor}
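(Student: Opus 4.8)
The plan is to reduce the assertion $[x]\in E(\mu^\bot)_2^{0,q}$ to the explicit description of the first page differential, and then to invoke Theorem \ref{partialDescriptionDifferentials} verbatim. The only thing that genuinely needs checking is the claim $[x]\in E(\mu^\bot)_2^{0,q}$; once this is available, the hypotheses of Theorem \ref{partialDescriptionDifferentials} for $k=2$ are met and the displayed formula for $d_2^{p,q}([x\otimes e_\mu])$ is exactly its conclusion.

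First I would unwind what the two survival conditions mean at the level of exact couples. For $k=2$ the subgroup of $E_1^{p,q}$ consisting of elements representing a class on the second page is precisely $\ker(d_1^{p,q})$: by Lemma \ref{generalDescriptionDifferentials} the class $d_1^{p,q}(x\otimes e_\mu)$ is represented by $\rho_{p+q}(K_{p+q}(\iota_p)(x\otimes e_\mu))$, which vanishes if and only if $K_{p+q}(\iota_p)(x\otimes e_\mu)$ lifts along $\pi_{p+1}\colon F_{p+1}\to F_p$, that is, exactly the one-step lifting condition defining membership in $E_2^{p,q}$. The same reasoning in the $\IZ^{n-p}$-system $\alpha(\mu^\bot)$ shows that $[x]\in E(\mu^\bot)_2^{0,q}$ is equivalent to $x\in\ker(d(\mu^\bot)_1^{0,q})$. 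This is the step where the restriction to $k=2$ is essential, and it is the point I expect to be the main obstacle: for $k\geq 3$, membership in $E_k^{p,q}$ is a multi-step lifting condition through $F_{p+k-1}$ whose intermediate, non-canonical lifts entangle the various subsystems, so there is no reason a lift of $x\otimes e_\mu$ in the full system should restrict to a lift of $x$ in the $\mu^\bot$-system. This is precisely the obstruction flagged in the remark preceding the statement, and for $k=2$ it disappears only because survival to the second page is controlled by the single, fully explicit differential $d_1=d_{PV}$.

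With both conditions rephrased as $d_1$-cocycle conditions, I would invoke the identification $d_1=d_{PV}$ from the corollary following Theorem \ref{partialDescriptionDifferentials}, giving
\[
d_1^{p,q}(x\otimes e_\mu)=\sum_{\lambda=1}^{n}(K_q(\alpha_\lambda)-\id)(x)\otimes(e_\mu\wedge e_\lambda).
\]
Here $e_\mu\wedge e_\lambda=0$ whenever $\lambda$ lies in the support of $\mu$, while for $\lambda\in\mu^\bot$ the elements $e_\mu\wedge e_\lambda=\pm e_\nu$, with $\nu\in T(p+1,n)$ the ordered tuple supported on the support of $\mu$ together with $\lambda$, are, up to sign, pairwise distinct basis vectors of $\Lambda^{p+1}(\IZ^n)$ and hence $\IZ$-linearly independent. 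Consequently $d_1^{p,q}(x\otimes e_\mu)=0$ is equivalent to the system $(K_q(\alpha_\lambda)-\id)(x)=0$ for every $\lambda\in\mu^\bot$. On the other hand, the Pimsner-Voiculescu differential for $\alpha(\mu^\bot)$ reads $d(\mu^\bot)_1^{0,q}([x])=\sum_{\lambda\in\mu^\bot}(K_q(\alpha_\lambda)-\id)(x)\otimes e_\lambda$, and since the $e_\lambda$ with $\lambda\in\mu^\bot$ form a basis of $\Lambda^1(\IZ^{n-p})$, its vanishing is governed by the very same system. Therefore $[x\otimes e_\mu]\in E_2^{p,q}$ forces $d(\mu^\bot)_1^{0,q}([x])=0$, i.e. $[x]\in E(\mu^\bot)_2^{0,q}$.

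Finally, having verified $[x]\in E(\mu^\bot)_2^{0,q}$, I would apply Theorem \ref{partialDescriptionDifferentials} with $k=2$ to the element $x\otimes e_\mu$; its conclusion is precisely the claimed formula for $d_2^{p,q}([x\otimes e_\mu])$ in terms of the lifts $y_\lambda\in K_q(F(\lambda)_1)$, where the lift $y\in K_q(F(\mu^\bot)_1)$ of $x$ now exists exactly because $[x]\in E(\mu^\bot)_2^{0,q}$. No further computation is required, since the entire content beyond the cocycle bookkeeping is already encoded in Theorem \ref{partialDescriptionDifferentials}.
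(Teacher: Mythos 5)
Your proof is correct, and it reaches the key claim $[x]\in E(\mu^\bot)_2^{0,q}$ by a route that differs in execution, though not in underlying mechanism, from the paper's. The paper stays at the level of extensions: it places the mapping-torus extension of the $\mu^\bot$-subsystem over the extension $0\to I_{p+1}\to F_{p+1}\to F_p\to 0$, uses naturality of the index map to write $\rho_{p+q}$ (applied to the image of $x\otimes e_\mu$) as $K_{p+q+1}(\eta)\circ\tilde\rho_{p+q}$, and then invokes the injectivity of $K_*(\eta)$ established inside the proof of Theorem \ref{partialDescriptionDifferentials}; vanishing of the first-page differential upstairs then forces $\tilde\rho_{p+q}(w)=0$, which by exactness produces the lift of $x$ into $K_q(F(\mu^\bot)_1)$. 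You instead translate both survival conditions into $d_1$-cocycle conditions, substitute the explicit identification $d_1=d_{PV}$, and observe that the wedge products $e_\mu\wedge e_\lambda$ for $\lambda\in\mu^\bot$ are (up to sign) distinct basis vectors of $\Lambda^{p+1}(\IZ^n)$, so that both conditions reduce to the single system $(K_q(\alpha_\lambda)-\id)(x)=0$ for $\lambda\in\mu^\bot$. The linear independence you use is exactly the content of the injectivity of $K_*(\eta)$, so the two arguments are equivalent; yours is more computational and self-contained at the price of routing through the Pimsner--Voiculescu identification of $d_1$, while the paper's is formulated so that it visibly reuses the diagram already built in the proof of Theorem \ref{partialDescriptionDifferentials} and does not need the explicit formula for $d_1$. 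Your diagnosis of why the argument is confined to $k=2$ --- that survival to higher pages is a multi-step lifting condition not controlled by a single explicit differential --- matches the obstruction the paper flags in the remark preceding the corollary. The final step, feeding $[x]\in E(\mu^\bot)_2^{0,q}$ into Theorem \ref{partialDescriptionDifferentials} with $k=2$, is exactly what the paper does.
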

\begin{proof}
Using the notation of the proof of Theorem \ref{partialDescriptionDifferentials}, we consider the commutative diagram with exact rows
\[
\xymatrix{
0 \ar[r] & S^p(SA)^{\binom{n-p}{1}} \ar[r] \ar[d]^/-0.1cm/{\eta} & S^pF(\mu^\bot)_1 \ar[r] \ar[d] & S^p A \ar[r] \ar[d] & 0\\
0 \ar[r] & (S^{p+1}A)^{\binom{n}{p+1}} \ar[r] & F_{p+1} \ar[r] & F_p \ar[r] & 0
	}
\]
Naturality of $K$-theory yields a commutative diagram
\[
\xymatrix@C+0.2cm@R+0.1cm{
K_{p+q}(S^pA) \ar[r]^/-0.9cm/{\tilde{\rho}_{p+q}} \ar[d] & K_{p+q+1}(S^{p+1}A)\otimes_\IZ \Lambda^1(\IZ^{n-p})\ar[d]^{K_{p+q+1}(\eta)}\\
K_{p+q}(F_p) \ar[r]^/-0.9cm/{\rho_{p+q}} & K_{p+q+1}(S^{p+1}A)\otimes_\IZ \Lambda^{p+1}(\IZ^n)
	}
\]
Let $w\in K_{p+q}(S^pA)$ be the unique element corresponding to $x\in K_q(A)$ under the Bott isomorphism. By the definition of the $E_2$-term, we have that $\rho_{p+q}(w\otimes e_\mu)=0$. The proof of Theorem \ref{partialDescriptionDifferentials} shows that $K_*(\eta)$ is injective, and hence $\tilde{\rho}_{p+q}(w)=0$ as well. Again by Bott periodicity, this gives rise to a lift $y\in K_q(F(\mu^{\bot})_1)$ for $x\in K_q(A)$ under the map induced by the surjection $F(\mu^\bot)_1\to F(\mu^\bot)_0=A$. Hence, $[x]\in E(\mu^\bot)_2^{p,q}$, and the claim follows from Theorem \ref{partialDescriptionDifferentials}.
\end{proof}


\section{Lifts under the boundary map of the Pimsner-Voiculescu sequence}
\label{Section Bott}
\noindent
In this section, we provide concrete lifts for elements in the image of the boundary map $\rho_*: K_*(A\rtimes_\alpha \IZ)\to K_{*+1}(A)$ of the Pimsner-Voiculescu sequence. These lifts will be essential for the concrete description of the second page differential in the next section. We first recall Exel's \cite{Exel1993} definition and some basic properties of Bott elements associated with almost commuting unitaries.

Consider the following extension of \cstar-algebras
\[
\xymatrix{
0 \ar[r] & \CK\otimes \CC(\IT) \ar[r] & \CT\otimes \CC(\IT) \ar[r] & \CC(\IT)\otimes \CC(\IT) \ar[r] & 0
}
\]
induced by the canonical surjection $\CT\to \CC(\IT)$. Up to a sign, the Bott element $\Fb\in K_0(\CC(\IT^2))$ is characterised by the property that its image under the corresponding index map $\rho_0:K_0(\CC(\IT)\otimes \CC(\IT))\to K_1(\CC(\IT))$ is a generator for $K_1(\CC(\IT))$. We fix the convention that $\rho_0(\Fb) = [z]$.
 
For $\eps\geq 0$, Exel \cite{Exel1993} defines the \emph{soft torus} $A_\eps$  as the universal \cstar-algebra
\[
A_\eps:=\mathrm{C}^*(u_\eps,v_\eps\ \text{unitaries}\ :\ \left\Vert\left[u_\eps,v_\eps\right]\right\Vert\leq\eps).
\]
It is obvious from the definition that $A_0=\CC(\IT^2)$, and that for $\eps\geq2$ the soft torus $A_\eps$ coincides with the full group \cstar-algebra of the free group in two generators. There is a canonical surjective $*$-homomorphism
\[
\varphi_\eps: A_\eps\to \CC(\IT^2)\quad \text{with}\ \varphi_\eps(u_\eps):= z_1, \ \varphi_\eps(v_\eps):=z_2.
\]
By \cite[Theorem 2.4]{Exel1993}, $K_*(\varphi_\eps)$ is an isomorphism whenever $\eps<2$, and in this case we define 
\[
\Fb_\eps:=K_0(\varphi_\eps)^{-1}(\Fb)\in K_0(A_\eps).
\]

\begin{definition}[following \cite{Exel1993}]
Let $0<\eps<2$, $B$ a unital \cstar-algebra, and $u,v\in B$ unitaries satisfying $\Vert\left[u,v\right]\Vert\leq\eps$. The universal property of the soft torus $A_\eps$ yields a unique $*$-homomorphism $\varphi:A_\eps\to B$ with  $\varphi(u_\eps)=u$ and $\varphi(v_\eps)=v$. Then
\[
\kappa(u,v):=K_0(\varphi)(\Fb_\eps)\in K_0(B)
\]
is called the \emph{Bott element} associated with $u$ and $v$.
\end{definition}

Note that $\kappa(u,v)$ is independent of $\eps$ as long as $\Vert\left[u,v\right]\Vert\leq\eps$. By definition, $\kappa(z_1,z_2)=\Fb\in K_0(\CC(\IT^2))$. If $\varphi:A\to B$ is a unital $*$-homomorphism and $u,v\in A$ are unitaries with $\Vert\left[u,v\right]\Vert<2$, then 
\[
K_0(\varphi)(\kappa(u,v))=\kappa(\varphi(u),\varphi(v)).
\]

For small tolerance $\eps>0$, the Bott element $\kappa(u,v)$ is given (up to a sign) by the following description due to Loring \cite{Loring1988}. Consider the real-valued functions $f,g,h\in \CC(\IT)$ defined as
\[
\def\arraystretch{3}
\begin{array}{lcl}
f(e^{2\pi it}) & = &
\begin{cases}
1-2t & \text{, \quad if } 0\leq t\leq 1/2,\\
-1+2t & \text{, \quad if } 1/2\leq t\leq 1,
\end{cases}\\
g(e^{2\pi it}) & = &
\begin{cases}
(f(\exp(2\pi it))-f(\exp(2\pi it))^2)^{1/2} & \text{, \quad if } 0\leq t\leq 1/2,\\
0 & \text{, \quad if } 1/2\leq t\leq 1,
\end{cases}\\
h(e^{2\pi it}) & = &
\begin{cases}
0 & \text{, \quad if } 0\leq t\leq 1/2,\\
(f(\exp(2\pi it))-f(\exp(2\pi it))^2)^{1/2} & \text{, \quad if } 1/2\leq t\leq 1,
\end{cases}
\end{array}\]
and set
\[
e(u,v):=\begin{pmatrix}f(v)&g(v)+h(v)u\\g(v)+u^{*}h(v)&1-f(v)\end{pmatrix}\in M_{2}(B).
\]
Clearly, $e(u,v)$ is self-adjoint for any choice of unitaries $u,v$. Moreover, direct calculations show that $e(u,v)$ is a projection whenever $u$ and $v$ commute. Loring observed in \cite[Proposition 3.5]{Loring1988} that there is a universal constant $\delta>0$ such that whenever $\Vert\left[u,v\right]\Vert<\delta$, then the spectrum of $e(u,v)$ does not contain $1/2$. In this case, $\chi_{[1/2,\infty)}(e(u,v))\in M_{2}(B)$ is a projection, and Loring's Bott element is given as
\[
[\chi_{[1/2,\infty)}(e(u,v))]-[1]\in K_0(B).
\]

The Bott elements also have the following well-known properties. We leave the proof to the reader.

\begin{prop}
\label{propertiesBott0}
Let $B$ be a unital \cstar-algebra and $u,v,u_1,v_1,\ldots,u_n,v_n\in B$ unitaries. Then the following statements hold true:
\begin{enumerate}[i)]
\item If $u_t\in B$ is a homotopy of unitaries with $\Vert\left[u_t,v\right]\Vert<2$ for all $t\in[0,1]$, then $\kappa(u_0,v)=\kappa(u_1,v)$.
\item If $\Vert\left[u_i,v_i\right]\Vert<2$ for $i=1,\ldots,n$, then
\[	
\kappa(\diag(u_1,\ldots, u_n),\diag(v_1,\ldots ,v_n))=\sum^n_{i=1}\kappa(u_i,v_i).
\]
\item If $\sum\limits_{i=1}^n\Vert\left[u,v_i\right]\Vert<2$, then $\kappa(u,v_1v_2\ldots v_n)=\sum\limits^n_{i=1}\kappa(u,v_i)$.
\item If $\Vert\left[u,v\right]\Vert<2$, then $\kappa(u,v)=-\kappa(u,v^*)=-\kappa(v,u)$.
\end{enumerate}
\end{prop}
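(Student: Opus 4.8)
The plan is to deduce all four statements from one principle: \emph{naturality} of the Bott element. For unitaries with $\Vert[u,v]\Vert\leq\eps<2$ the associated unital $*$-homomorphism $\varphi:A_\eps\to B$ satisfies $\kappa(u,v)=K_0(\varphi)(\Fb_\eps)$, so every identity among Bott elements should be traced back either to functoriality of $K_0$ applied to a suitable homomorphism out of $A_\eps$, or to the action on $\Fb\in K_0(\CC(\IT^2))$ of the evident symmetries of $\IT^2$, transported to $\Fb_\eps$ through the $K$-isomorphism $K_0(\varphi_\eps)$. Since $\kappa(u,v)$ is independent of the tolerance as long as $\Vert[u,v]\Vert\leq\eps<2$, I may freely pass to a common $\eps$ whenever finitely many pairs are involved. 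I would prove (i) first, as it is the engine for (iii).

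For (i): by compactness of $[0,1]$ and continuity of $t\mapsto[u_t,v]$, choose $\eps<2$ with $\Vert[u_t,v]\Vert\leq\eps$ for all $t$. The family then defines a single homomorphism $\Phi:A_\eps\to\CC([0,1],B)$ with $u_\eps\mapsto(t\mapsto u_t)$ and $v_\eps\mapsto(t\mapsto v)$; since $\ev_0,\ev_1:\CC([0,1],B)\to B$ are homotopic they agree on $K_0$, and applying $K_0(\ev_0\circ\Phi)=K_0(\ev_1\circ\Phi)$ to $\Fb_\eps$ gives $\kappa(u_0,v)=\kappa(u_1,v)$. For (ii): choosing $\eps<2$ dominating every $\Vert[u_i,v_i]\Vert$, the block-diagonal pair yields $\Psi:A_\eps\to M_n(B)$, $a\mapsto\diag(\varphi_1(a),\dots,\varphi_n(a))$, so $\kappa(\diag(u_i),\diag(v_i))=K_0(\Psi)(\Fb_\eps)$; under $K_0(M_n(B))\cong K_0(B)$ one has $K_0(\Psi)=\sum_i K_0(\varphi_i)$, because a block-diagonal projection has class the sum of its diagonal blocks, which is precisely the assertion.

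Statement (iii) is the heart of the matter. I would connect $\diag(v_1,v_2)$ to $\diag(v_1v_2,1)$ in $M_2(B)$ through $V_t=\diag(v_1,1)\,R_t\,\diag(1,v_2)\,R_t^*$, where $R_t$ is the standard scalar rotation joining the identity to the flip (so $V_0=\diag(v_1,v_2)$ and $V_1=\diag(v_1v_2,1)$). Putting $U=\diag(u,u)$, which commutes with the scalar $R_t$, the Leibniz estimate gives $\Vert[U,V_t]\Vert\leq\Vert[u,v_1]\Vert+\Vert[u,v_2]\Vert<2$ for all $t$, so (i) applies in $M_2(B)$ and yields $\kappa(U,V_0)=\kappa(U,V_1)$. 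Expanding both sides with (ii) turns this into $\kappa(u,v_1)+\kappa(u,v_2)=\kappa(u,v_1v_2)+\kappa(u,1)$, so it remains to see that $\kappa(u,1)=0$. For this, note that the defining homomorphism $A_\eps\to B$ with $v_\eps\mapsto1$ factors as $A_\eps\xrightarrow{\varphi_\eps}\CC(\IT^2)\xrightarrow{\pi}\CC(\IT)\to B$, where $\pi$ is restriction to $\IT\times\{1\}$ and the last map sends $z\mapsto u$; since $\Fb$ has rank zero and $\widetilde{K}_0(\CC(\IT))=0$, one gets $\pi_*\Fb=0$, hence $\kappa(u,1)=0$. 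An induction on $n$, using $\Vert[u,v_1\cdots v_{n-1}]\Vert\leq\sum_{i<n}\Vert[u,v_i]\Vert$ to keep the hypotheses in force, finishes the general case.

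For (iv): the assignments $u_\eps\mapsto v_\eps,\ v_\eps\mapsto u_\eps$ and $u_\eps\mapsto u_\eps,\ v_\eps\mapsto v_\eps^*$ define $*$-homomorphisms $A_\eps\to A_\eps$, because the commutator norm is unchanged under the flip and under $v\mapsto v^*$ (indeed $v[u,v^*]v=-[u,v]$, so $\Vert[u,v^*]\Vert=\Vert[u,v]\Vert$). Precomposing $\varphi$ with them realises $\kappa(v,u)$ and $\kappa(u,v^*)$ as $K_0(\varphi)$ applied to the images of $\Fb_\eps$, and through $K_0(\varphi_\eps)$ these images are governed by the action on $\Fb$ of the flip $(w_1,w_2)\mapsto(w_2,w_1)$ and of $(w_1,w_2)\mapsto(w_1,\bar w_2)$; writing $\Fb$ (up to the sign fixed by $\rho_0(\Fb)=[z]$) as the external product $[z_1]\cup[z_2]$ of the two $K_1(\CC(\IT))$-generators, graded commutativity of the product and $[\bar z]=-[z]$ show both symmetries act by $-1$, giving $\kappa(v,u)=\kappa(u,v^*)=-\kappa(u,v)$. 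The main obstacle I anticipate is the \emph{uniform} commutator estimate along the homotopy in (iii)---securing $\Vert[U,V_t]\Vert<2$ for every $t$ rather than merely at the endpoints, which is exactly what licenses (i)---together with pinning down the two signs in (iv), i.e. confirming that the relevant involutions of $\IT^2$ act by $-1$ on $\Fb$ in a manner consistent with the normalisation $\rho_0(\Fb)=[z]$.
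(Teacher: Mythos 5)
The paper offers no proof of this proposition (it is explicitly ``left to the reader''), so there is nothing to compare against; judged on its own, your argument is correct and complete, and it follows the standard route (essentially Exel's original arguments for the soft torus): everything is reduced to the universal property of $A_\eps$, homotopy invariance of $K_0$, and the known action of the symmetries of $\IT^2$ on $\Fb$. Your uniform commutator estimate along the rotation homotopy in (iii) is the right computation: since $U=\diag(u,u)$ commutes with the scalar rotations $R_t$, the Leibniz rule gives $\Vert[U,V_t]\Vert\leq\Vert[u,v_1]\Vert+\Vert[u,v_2]\Vert<2$ for \emph{all} $t$, which is exactly what is needed; and $\kappa(u,1)=0$ follows as you say because $\Fb$ has rank zero and $\widetilde{K}_0(\CC(\IT))=0$. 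Two cosmetic remarks only: in (iii) you invoke (i) for a homotopy in the \emph{second} variable while (i) is stated for the first, but your proof of (i) via $\Phi:A_\eps\to\CC([0,1],B)$ is symmetric in the two variables (and indeed allows both to move), so this costs nothing; and in (iv) the signs you extract from graded commutativity and $[\bar z]=-[z]$ are independent of the sign convention on $\Fb$, so consistency with the normalisation $\rho_0(\Fb)=[z]$ is automatic.
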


For a unital, purely infinite and simple \cstar-algebra $A$, Elliott and R\o rdam showed in \cite[Theorem 2.2.1]{ElliottRordam1995} that every element $x\in K_0(A)$ is a Bott element $x=\kappa(u,v)$ for some pair of commuting unitaries $u,v\in A$ (with full spectrum). On the other hand, if $A$ is a unital \cstar-algebra admitting a tracial state $\tau$, then every Bott elements in $K_0(A)$ associated with exactly commuting unitaries vanishes under $K_0(\tau)$. In fact, if $\tau_2$ denotes the induced (unnormalized) trace on $M_2(A)$, then $\tau_2(e(u,v))=1$.

It will turn out to be convenient to consider the following analogous notion of Bott elements in the $K_1$-group of a unital \cstar-algebra.
\begin{nota}
Let $A$ be a unital \cstar-algebra. Let $p\in A$ be a projection and $u\in A$ a unitary commuting with $p$. Then $pup+1-p\in A$ is a unitary, and we define the \emph{Bott element} associated with $p$ and $u$ as
\[
\kappa(p,u):=[pup+1-p]\in K_1(A).
\]
Observe that the  Bott isomorphism $K_0(A)\stackrel{\cong}{\longrightarrow} K_1(SA)$ indeed sends $[p]$ to $\kappa(p,z)$.
\end{nota}

Recall the \emph{Pimsner-Voiculescu exact sequence} \cite{PimsnerVoiculescu1980}
\[
\xymatrix@C+0.4cm@R+0.1cm{
K_{0}(A) \ar[r]^/-0.5em/{K_0(\alpha)-\id} & K_0(A) \ar[r]^/-0.3cm/{K_0(j)} & K_{0}(A\rtimes_\alpha\IZ) \ar[d]^{\rho_{0}}\\					
K_{1}(A\rtimes_\alpha\IZ) \ar[u]^{\rho_{1}} & K_1(A) \ar[l]^/-0.3cm/{K_1(j)} & K_1(A) \ar[l]^/-0.5em/{K_1(\alpha)-\id}
}
\]
with $j: A\into A\rtimes_\alpha\IZ$ denoting the canonical embedding. The six-term exact sequence associated with the \emph{Toeplitz extension}
\[
\xymatrix{
0 \ar[r]& \CK\otimes A \ar[r] & \CT(A,\alpha) \ar[r] &  A\rtimes_\alpha\IZ \ar[r] & 0
} 
\]
serves as a starting point for the proof of the Pimsner-Voiculescu sequence, where
\[
\CT(A,\alpha):=\mathrm{C}^*(1\otimes a,\ v\otimes u\ :\ a\in A)\subset \CT\otimes (A\rtimes_\alpha\IZ)
\]
is the \emph{crossed Toeplitz-algebra} associated to $\alpha$, see also \cite{Cuntz1984}. Here and in the following, if not stated otherwise, $u\in A\rtimes_\alpha \IZ$ denotes the canonical unitary implementing $\alpha$. It is important for us to observe that the boundary maps of the Pimsner-Voiculescu sequence coincide (at least up to an application of the stabilisation isomorphism) with the ones of the six-term exact sequence associated with the Toeplitz extension.

The Pimsner-Voiculescu sequence is natural in the sense that given an equivariant $*$-homomorphism $\varphi:(A,\alpha,\IZ)\to(B,\beta,\IZ)$, the following diagram commutes
\[
\xymatrix{
K_*(A) \ar[rr]^{K_*(\alpha)-\id}\ar[d]^{K_*(\varphi)} & & K_*(A) \ar[r]\ar[d]^{K_*(\varphi)} & K_*(A\rtimes_{\alpha}\IZ)  \ar[d]^{K_*(\check{\varphi})} \ar[r]^{\rho_*} &  K_{*+1}(A) \ar[d]^{K_{*+1}(\varphi)} \\
K_*(B) \ar[rr]^{K_*(\beta)-\id} & & K_*(B) \ar[r] & K_*(B\rtimes_{\beta}\IZ) \ar[r]^{\rho_*} &  K_{*+1}(B)
}
\]

\begin{nota}
Let $A$ be a \cstar-algebra and $\alpha\in \Aut(A)$ an automorphism. For $n\in \IN$, we write $\alpha^{(n)} := \alpha\otimes \id \in \Aut(A\otimes M_n(\IC))$. Similarly, we define $a^{(n)}:=a\otimes 1_n\in A\otimes M_n(\IC)$ for a given element $a\in A$.
\end{nota}

Assume that $A$ is unital. We now describe preimages of the boundary map $\rho_1:K_1(A\rtimes_\alpha\IZ)\to K_0(A)$ of the Pimsner-Voiculescu sequence. Observe that every element $g\in K_0(A)$ can be expressed as $g=[p]-[1_n]$ for some $p\in \CP_m(A)$ and $n\geq0$. It is obvious that $g\in \ker(K_0(\alpha)-\id)$ if and only if $[p]\in \ker(K_0(\alpha)-\id)$. Hence, it suffices to describe lifts for elements of the form  $[p]\in\im(\rho_1)=\ker(K_0(\alpha)-\id)$.

\begin{prop}
\label{index1}
Let $A$ be a unital \cstar-algebra, $\alpha\in \Aut(A)$, and $p\in \CP_k(A)$ a projection satisfying $[p]\in \ker(K_0(\alpha)-\id)$. By the standard picture of $K_0(A)$, we find $l,m\geq0$ and a unitary $w\in\CU_n(A)$ such that 
\[
\alpha^{(n)}(q)=wqw^*,
\]
where $n:=k+l+m$ and $q:=\diag(p,1_l,0_m) \in \CP_n(A)$. Then
\[
\rho_1(\kappa(q,w^*u^{(n)})-[u^{(l)}])=[p].
\]
\end{prop}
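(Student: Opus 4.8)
The plan is to evaluate $\rho_1$ through its realisation as the index map of the crossed Toeplitz extension $0\to \CK\otimes A\to \CT(A,\alpha)\to A\rtimes_\alpha\IZ\to 0$, using the partial isometry picture of the boundary map. First I would check that $\kappa(q,w^*u^{(n)})$ is defined: the covariance relation $u^{(n)}x(u^{(n)})^*=\alpha^{(n)}(x)$ together with $\alpha^{(n)}(q)=wqw^*$ gives $(w^*u^{(n)})q(w^*u^{(n)})^*=w^*\alpha^{(n)}(q)w=q$, so $q$ commutes with the unitary $w^*u^{(n)}$ and $U:=q(w^*u^{(n)})q+1-q$ is a unitary in $M_n(A\rtimes_\alpha\IZ)$ representing $\kappa(q,w^*u^{(n)})$. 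Since $\rho_1$ agrees, up to the stabilisation isomorphism, with the index map of the above extension, I would compute $\rho_1(U)$ by exhibiting a partial isometry lift $S\in M_n(\CT(A,\alpha))$ for which $1-S^*S$ and $1-SS^*$ are projections over the ideal $\CK\otimes M_n(A)$, whence $\rho_1(U)=[1-SS^*]-[1-S^*S]$ under the sign convention fixed earlier.

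For the lift, put $V:=v\otimes u\in\CT(A,\alpha)$ (an isometry lifting $u$) and let $r:=1-vv^*\in\CK$ be the canonical rank-one projection. Then $T:=w^*V^{(n)}$ is an isometry lifting $w^*u^{(n)}$, and $S:=qTq+(1-q)$ lifts $U$. The two computations that drive everything are: (i) $T^*qT=(V^{(n)})^*\alpha^{(n)}(q)V^{(n)}=q$, using $wqw^*=\alpha^{(n)}(q)$ and $(V^{(n)})^*(1\otimes y)V^{(n)}=1\otimes(\alpha^{(n)})^{-1}(y)$ for $y\in M_n(A)$; and (ii) $TqT^*=w^*((1-r)\otimes\alpha^{(n)}(q))w=(1-r)\otimes q$. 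From (i) one gets $S^*S=q(T^*qT)q+(1-q)=1$, so $S$ is in fact an isometry, while from (ii) one gets $SS^*=q(TqT^*)q+(1-q)=1-r\otimes q$. Hence $1-S^*S=0$ and $1-SS^*=r\otimes q$, a projection in the ideal.

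It then remains to read off the index and remove the stabilising contribution. Under $K_0(\CK\otimes M_n(A))\cong K_0(A)$ the class $[r\otimes q]$ equals $[q]=[p]+l[1_A]$ (the summand $0_m$ contributing nothing, and $1_l$ contributing $l[1_A]$), so $\rho_1(\kappa(q,w^*u^{(n)}))=[p]+l[1_A]$ with the convention pinned down by $\rho_0(\Fb)=[z]$. Applying the same recipe to $u^{(l)}$ — lift to $V^{(l)}$, whose defect projection is $r\otimes 1_l$ — gives $\rho_1([u^{(l)}])=l[1_A]$. Subtracting yields $\rho_1(\kappa(q,w^*u^{(n)})-[u^{(l)}])=[p]$, as claimed.

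The main obstacle is the identity $T^*qT=q$ of step (i): it is precisely the choice of a unitary $w$ implementing $\alpha^{(n)}(q)=wqw^*$ that forces the defect of $S$ to sit entirely on the range side, making $S$ an isometry and localising the index on the projection $r\otimes q$. The only remaining care is the bookkeeping of the identification $K_0(\CK\otimes M_n(A))\cong K_0(A)$ and of the global sign; the latter is not determined by exactness alone but is fixed by the earlier convention $\rho_0(\Fb)=[z]$, and the subtraction of $[u^{(l)}]$ is exactly what cancels the spurious $l[1_A]$ coming from the stabilising summand $1_l$ in $q$.
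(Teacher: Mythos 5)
Your proposal is correct and follows essentially the same route as the paper: both evaluate $\rho_1$ via the partial isometry picture of the index map for the crossed Toeplitz extension, using the isometry lift $q(w^*V^{(n)})q+(1-q)$ (the paper writes the same lift as $v\otimes(pw^*up)+1\otimes(1-p)$ in the scalar case and then passes to matrices via $M_n(A)\rtimes_{\alpha^{(n)}}\IZ\cong M_n(A\rtimes_\alpha\IZ)$, whereas you carry out the computation directly at the matrix level). The defect computation $1-SS^*=(1-vv^*)\otimes q$ and the cancellation of $l[1]$ by subtracting $[u^{(l)}]$ match the paper's argument exactly.
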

\begin{proof}
Assume first that $k=1$ and $l=m=0$. It is easy to verify that 
\[
y:=v\otimes(pw^*up)+ 1\otimes(1-p)\in \CT(A,\alpha)
\]
is an isometry and a lift for $pwu^*p+1-p\in A\rtimes_{\alpha}\IZ$. Using the partial isometry picture of the index map, one computes
\[\def\arraystretch{1.3}
\begin{array}{lclll}
\rho_1(\kappa(p,w^*u)) & = & [1-yy^*]-[1-y^*y] & = & [1-yy^*]\\
 & = & [(1-vv^*)\otimes p]  & \hspace{-1cm}\in & \hspace{-1.2cm} K_0(\CK\otimes A).
\end{array}
\]
By the stabilisation isomorphism $K_0(A)\cong K_0(\CK\otimes A)$, we deduce that
\[
\rho_1(\kappa(p,w^{*}u))=[p]\in K_0(A).
\]

Now, let $q\in \CP_n(A)$ be as in the statement. The canonical isomorphism $\eta:M_n(A)\rtimes_{\alpha^{(n)}}\IZ\stackrel{\cong}{\longrightarrow} A\rtimes_\alpha \IZ \otimes M_n(\IC)$ fits into a commutative diagram
\[
\xymatrix@C+0.4cm@R+0.1cm{
K_*(M_n(A)\rtimes_{\alpha^{(n)}}\IZ) \ar[r]^/.4cm/{\rho^{(n)}_*} \ar[d]_{K_*(\eta)} & K_{*+1}(A) \\
K_*(A\rtimes_{\alpha}\IZ) \ar[ur]_{\rho_*} &
}
\]
relating the boundary maps of the respective Pimsner-Voiculescu sequences. Hence,
\[
\rho_1(\kappa(q,w^*u^{(k)}))=(\rho_1\circ K_1(\eta))(\kappa(q,w^*u))=\rho^{(k)}_1(\kappa(q,w^*u))=[q].
\]
It follows that
\[
\rho_1(\kappa(q,w^*u^{(k)})-[u^l])=[q]-[1_l]=[\diag(p, 1_l)]-[1_l]=[p].
\]
\end{proof}

The lifts for the boundary map $\rho_0: K_0(A\rtimes_\alpha \IZ)\to K_1(A)$ require an alternative picture for $K_1(A)$. Using the natural identification $K_*(A)\cong KK(\CC(\IT),A)$, Dadarlat's result \cite[Theorem A]{Dadarlat1995} for $X=\IT$ admits the following characterisation.

\begin{theorem}[cf.~ \cite{Dadarlat1995}]
\label{DescriptionK1Dadarlat}
Let $A$ be a unital \cstar-algebra and $u,v\in \CU(A)$ two unitaries. Then $[u]=[v]\in K_1(A)$ if and only if for every $\eps>0$, there exist $k\geq1$, $\lambda_1,\ldots,\lambda_k\in \IT$, and a unitary $w\in M_{k+1}(A)$ such that
\[
\left\Vert w(\diag(u,\lambda_{1},\ldots,\lambda_{k}))w^{*} - \diag(v,\lambda_1,\ldots,\lambda_k) \right\Vert\leq\eps.
\]
\end{theorem}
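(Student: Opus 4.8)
The plan is to obtain the statement as the specialization to $X=\IT$ of Dadarlat's Theorem A \cite{Dadarlat1995}, after translating both sides of the claimed equivalence into the language of unital $*$-homomorphisms out of $\CC(\IT)$. First I would set up the dictionary: by the universal property of $\CC(\IT)=\mathrm{C}^*(z)$, a unitary $u\in\CU(A)$ corresponds to the unique unital $*$-homomorphism $\phi_u\colon\CC(\IT)\to A$ with $\phi_u(z)=u$. Under the identification $K_*(A)\cong KK(\CC(\IT),A)$, the class $[\phi_u]$ is recorded by the induced maps on $K$-theory: on $K_0(\CC(\IT))\cong\IZ$ it sends the generator $[1]$ to $[1_A]$, and on $K_1(\CC(\IT))\cong\IZ$ it sends the generator $[z]$ to $[u]\in K_1(A)$.

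Next I would check that $[\phi_u]=[\phi_v]$ in $KK(\CC(\IT),A)$ if and only if $[u]=[v]$ in $K_1(A)$. Since $\CC(\IT)$ is nuclear, satisfies the UCT, and has finitely generated free $K$-theory (so the relevant $\operatorname{Ext}$-term vanishes and $KK=KL$ here), the UCT gives $KK(\CC(\IT),A)\cong\Hom_\IZ(K_*(\CC(\IT)),K_*(A))$. As $\phi_u$ and $\phi_v$ are both unital their $K_0$-components coincide, so equality of the classes is equivalent to equality of the $K_1$-components, that is $[u]=[v]$. I would then identify Dadarlat's trivial (degenerate) homomorphisms: for $X=\IT$ these factor through point evaluations $\ev_\lambda\colon\CC(\IT)\to\IC$, $z\mapsto\lambda$, and a finite direct sum $\sigma=\bigoplus_{i=1}^{k}\ev_{\lambda_i}\colon\CC(\IT)\to M_k(\IC)\subset M_k(A)$ sends $z\mapsto\diag(\lambda_1,\dots,\lambda_k)$. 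Hence $\phi_u\oplus\sigma$ sends $z\mapsto\diag(u,\lambda_1,\dots,\lambda_k)$, and Dadarlat's conclusion that $[\phi_u]=[\phi_v]$ is equivalent to the approximate stable unitary equivalence of $\phi_u$ and $\phi_v$ becomes, after evaluating on $z$, precisely the asserted norm estimate.

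Finally I would take care of the bookkeeping relating the two formulations of approximate unitary equivalence. Dadarlat's criterion is quantified over all finite subsets $F\subset\CC(\IT)$ and all $\eps>0$, producing $\sigma$ and a unitary $w\in\CU(M_{k+1}(A))$ with $\Vert w(\phi_u\oplus\sigma)(f)w^*-(\phi_v\oplus\sigma)(f)\Vert\le\eps$ for $f\in F$; conversely it suffices to treat the single generator $F=\{z\}$. Since $z$ (and hence $z^*$) generates $\CC(\IT)$ and $f\mapsto w(\phi_u\oplus\sigma)(f)w^*$ is a $*$-homomorphism, an estimate on $z$ propagates to $*$-polynomials in $z$ and, by density, to all of $\CC(\IT)$, at the cost of shrinking the tolerance; this reconciles the single-generator estimate, quantified over all $\eps>0$, with Dadarlat's finite-set formulation. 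I expect this last translation — matching the finite-set/$\eps$ form of approximate unitary equivalence with the single-generator estimate, together with the verification that for $X=\IT$ Dadarlat's $KL$-invariant collapses to $[u]=[v]\in K_1(A)$ — to be the only real content; everything else is a direct reading of Dadarlat's theorem.
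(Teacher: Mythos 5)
Your proposal is correct and follows exactly the route the paper takes: the paper offers no proof beyond the remark that the statement is Dadarlat's Theorem A for $X=\IT$ read through the identification $K_*(A)\cong KK(\CC(\IT),A)$, and your argument simply supplies the details of that translation (unitaries as unital $*$-homomorphisms out of $\CC(\IT)$, the UCT collapse of $KK(\CC(\IT),A)$ onto $\Hom(K_*(\CC(\IT)),K_*(A))$, point evaluations as the degenerate summands, and the single-generator reduction). All of these steps check out, so your write-up is a faithful elaboration of the paper's citation.
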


For $\eps>0$ define the universal \cstar-algebra
\[
T_\eps:=\mathrm{C}^*\left(s\ \text{isometry}, u\ \text{unitary}\ :\ \Vert \left[s,u\right]\Vert\leq\eps,\ u(1-ss^*)=(1-ss^*)u\right).
\]
Consider the canonical surjection $\pi_\eps:T_\eps\to A_\eps$ given by $\pi_\eps(s)=u_\eps$ and $\pi_\eps(u)=v_\eps$. Observe that the surjective $*$-homomorphism $\psi_\eps:T_\eps\to \CT\otimes \CC(\IT)$ given by $\psi_\eps(s)=v$ and $\psi_\eps(u)=z$ fits into the following commutative diagram
\[
\xymatrix{
0 \ar[r]& \CK\otimes\CC(\IT) \ar[r] \ar@{=}[d] & T_{\eps}\ar[r]^{\pi_\eps}\ar[d]^{\psi_{\eps}} & A_{\eps} 		 	\ar[r]\ar[d]^{\varphi_{\eps}}& 0\\
0 \ar[r] & \CK\otimes\CC(\IT) \ar[r] & \CT\otimes\CC(\IT) \ar[r] & \CC(\IT)\otimes\CC(\IT) \ar[r]& 0
}
\]
Naturality of $K$-theory allows us to compare the occurring boundary maps
\[
\xymatrix{
K_0(A_\eps) \ar[r]^{\rho_\eps} \ar[d]_{K_0(\varphi_\eps)} & K_1(\CC(\IT))\\
K_0(\CC(\IT)\otimes\CC(\IT)) \ar[ru]_\rho &  	
}
\]
For $\eps<2$, we therefore get that $\rho_\eps(\Fb_\eps)=\rho(\Fb)=[z]\in K_1(\CC(\IT))$.

\begin{prop}
\label{index0}
Let $A$ be a unital \cstar-algebra, $\alpha\in \Aut(A)$, and $x\in \CU_k(A)$ a unitary satisfying $[x]\in \ker(K_1(\alpha)-\id)$. An application of Theorem \ref{DescriptionK1Dadarlat} yields $l\geq0$, $\lambda_1,\ldots,\lambda_l\in\IT$, and $w\in\CU_m(A)$ satisfying
\[
\left\Vert\alpha^{(m)}(y)-wyw^*\right\Vert < 2,
\]
where $m:=k+l$ and $y:=\diag(x,\lambda_1,\cdots,\lambda_l)\in \CU_m(A)$. Then
\[
\rho_0(\kappa(w^*u^{(m)},y))=[x].
\]
\end{prop}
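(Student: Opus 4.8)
The plan is to mirror the strategy of Proposition \ref{index1}, but now to lift along the extension defining the universal algebra $T_\eps$ instead of computing an index by hand. By hypothesis we may fix $\eps<2$ with $\|\alpha^{(m)}(y)-wyw^*\|\leq\eps$. Since $u^{(m)}y(u^{(m)})^*=\alpha^{(m)}(y)$, a short computation gives
\[
[w^*u^{(m)},y]=w^*\bigl(\alpha^{(m)}(y)-wyw^*\bigr)u^{(m)},
\]
so $\|[w^*u^{(m)},y]\|\leq\eps<2$ and the Bott element $\kappa(w^*u^{(m)},y)=K_0(\phi)(\Fb_\eps)$ is defined, where $\phi:A_\eps\to M_m(A\rtimes_\alpha\IZ)$ is the unital $*$-homomorphism with $\phi(u_\eps)=w^*u^{(m)}$ and $\phi(v_\eps)=y$. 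I read $\rho_0$ as the boundary map of the (matrix-amplified) Toeplitz extension $0\to\CK\otimes M_m(A)\to M_m(\CT(A,\alpha))\to M_m(A\rtimes_\alpha\IZ)\to0$, which agrees with the Pimsner-Voiculescu boundary map through the isomorphism $\eta$ appearing in the proof of Proposition \ref{index1}.

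The central step is to lift $\phi$ to a morphism of extensions. Inside $M_m(\CT(A,\alpha))\subset\CT\otimes M_m(A\rtimes_\alpha\IZ)$ I set
\[
S:=(1\otimes w^*)(v\otimes u^{(m)}),\qquad U:=1\otimes y.
\]
Then $S$ is an isometry and $U$ a unitary; using $v^*v=1$ one computes $SS^*=vv^*\otimes 1_m$, so $1-SS^*=(1-vv^*)\otimes 1_m$ commutes with $U$, while
\[
[S,U]=v\otimes w^*\bigl(\alpha^{(m)}(y)-wyw^*\bigr)u^{(m)}
\]
has norm at most $\eps$. Hence $S$ and $U$ satisfy the defining relations of $T_\eps$, and the universal property yields a $*$-homomorphism $\Phi:T_\eps\to M_m(\CT(A,\alpha))$ with $\Phi(s)=S$ and $\Phi(u)=U$. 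As the quotient sends $v\otimes u^{(m)}\mapsto u^{(m)}$ and $1\otimes w^*\mapsto w^*$, one verifies $\pi\circ\Phi=\phi\circ\pi_\eps$, so $\Phi$ maps the ideal $\CK\otimes\CC(\IT)=\ker\pi_\eps$ into $\CK\otimes M_m(A)=\ker\pi$; write $\Phi|$ for this restriction.

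Naturality of the boundary map then gives
\[
\rho_0\bigl(\kappa(w^*u^{(m)},y)\bigr)=K_1(\Phi|)\bigl(\rho_\eps(\Fb_\eps)\bigr)=K_1(\Phi|)([z]),
\]
using $\rho_\eps(\Fb_\eps)=[z]$ from the diagram preceding the statement. To evaluate the right-hand side I represent the generator $[z]\in K_1(\CK\otimes\CC(\IT))$ by the unitary $(1-ss^*)u+ss^*\in T_\eps^+$, which differs from $1$ by an element of the ideal and maps under $\psi_\eps$ to the standard representative $(1-vv^*)\otimes z+vv^*\otimes1$. Applying $\Phi$ and using $\Phi(1-ss^*)=(1-vv^*)\otimes 1_m$ yields
\[
\Phi\bigl((1-ss^*)u+ss^*\bigr)=(1-vv^*)\otimes y+vv^*\otimes 1_m,
\]
whose class, under the corner identification $(1-vv^*)\otimes M_m(A)\cong M_m(A)$ together with $K_1(M_m(A))\cong K_1(A)$, is $[y]$. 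Finally $[y]=[x]+\sum_{i=1}^{l}[\lambda_i1_A]=[x]$, since each scalar unitary $\lambda_i1_A$ is homotopic to $1$ through $\IT$. This gives $\rho_0(\kappa(w^*u^{(m)},y))=[x]$.

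The verification of the $T_\eps$-relations for $S$ and $U$ is routine once the commutator identity above is in hand, so the genuine content, and the main obstacle, lies in the last paragraph: one must pin down an explicit representative of the generator $[z]$ inside the ideal $\CK\otimes\CC(\IT)$, track it through $\Phi|$, and match its image with $[x]$ through the stabilisation and corner isomorphisms, while ensuring that the boundary-map naturality square for the two extensions applies cleanly.
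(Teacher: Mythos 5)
Your proof is correct and follows essentially the same route as the paper: both lift the Bott element through the universal extension $0\to\CK\otimes\CC(\IT)\to T_\eps\to A_\eps\to 0$ into the (crossed) Toeplitz extension and apply naturality of the boundary map together with $\rho_\eps(\Fb_\eps)=[z]$. The only harmless differences are that the paper first treats the case $k=1$, $l=0$ and then passes to matrices via the isomorphism $M_m(A)\rtimes_{\alpha^{(m)}}\IZ\cong M_m(A\rtimes_\alpha\IZ)$, whereas you amplify the Toeplitz extension directly, and that you evaluate $K_1(\Phi|)$ on an explicit unitary representative of $[z]$ where the paper instead identifies the restriction of the lift to the ideal as $\id_\CK\otimes\nu$ with $\nu(z)=x$.
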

\begin{proof}
First assume that $k=1$ and $l=0$. For suitably chosen $\eps<2$, there is a $*$-homomorphism
\[
\psi:T_\eps\to \CT(A,\alpha),\quad \psi(s)=v\otimes w^*u,\ \psi(u)=1\otimes x.
\]
This homomorphism fits into the commutative diagram
\[
\xymatrix{
0 \ar[r]& \CK\otimes \CC(\IT) \ar[r] \ar[d]^{\id_\CK\otimes \nu}& T_\eps \ar[r] \ar[d]^\psi & A_\eps \ar[r] \ar[d]^\varphi & 0\\
0 \ar[r]& \CK\otimes A \ar[r] & \CT(A,\alpha) \ar[r] &	A\rtimes_\alpha\IZ \ar[r]& 0
}
\]
with $\varphi$ and $\nu$ given by $\varphi(u_\eps)=w^{*}u$, $\varphi(v_\eps)=x$, and $\nu(z)=x$, respectively. By stability of $K$-theory,
\[\def\arraystretch{1.5}
\begin{array}{lclcl}
\rho_0(\kappa(w^*u,x)) & = & (\rho_0\circ K_0(\varphi))(\Fb_\eps)  & = & (K_1(\id_\CK\otimes \nu)\circ\rho_\eps)(\Fb_\eps)\\
 & = & K_1(\nu)([z]) & = & [x]\hspace{0.2cm}\in\hspace{0.2cm} K_1(A).
\end{array}
\]

If $y\in\CU_m(A)$ is as in the statement, then by a similar reasoning as in the proof of Proposition \ref{index1},
\[
\rho_0(\kappa(w^* u^{(m)},y))=(\rho_0\circ K_0(\eta))(\kappa(w^*u,y))=\rho^{(m)}_{0}(\kappa(w^*u,y))=[y]=[x].
\]
\end{proof}


\section{The second page differential associated with a $\IZ^2$-action}
\noindent
In this section, we provide an explicit description of the second page differential of the spectral sequence associated with a $\IZ^2$-action in terms of Bott elements. For this, we make use of the concrete lifts for the boundary map of the Pimsner-Voiculescu sequence described in Section \ref{Section Bott}.

\begin{nota}
Let $A$ be a \cstar-algebra and $\alpha:\IZ^2\curvearrowright A$ an action. 
\begin{itemize}
\item We denote by $\tilde{\alpha}_2\in \Aut(\CM_{\alpha_1}(A))$ the $*$-automorphism given by
\[
\tilde{\alpha}_2(f)(t):=\alpha_2(f(t)),\quad f\in \CM_{\alpha_1}(A),\ t\in [0,1]^n.
\]
\item For $i=1,2$, we write  $d_*(\alpha_i) := K_*(\alpha_i)-\id$ and denote by
\[
\begin{array}{c}
\op{k}(d_*(\alpha_2)): \kernel(d_*(\alpha_1))\to \kernel(d_*(\alpha_1)),\\
\op{co}(d_*(\alpha_2)):\op{coker}(d_*(\alpha_1))\to \op{coker}(d_*(\alpha_1))
\end{array}
\]
the respective natural homomorphisms induced by $d_*(\alpha_2)$.
\end{itemize}
\end{nota}

Given a \cstar-algebra $A$ and an action $\alpha:\IZ^2\curvearrowright A$, the corresponding $E_2$-term is concentrated in $p=0,1,2$ and given as
\[
\def\arraystretch{1.5}
\begin{array}{lcl}
E^{0,q}_2 & = & \kernel(K_q(\alpha_1) - \id) \cap \kernel(K_q(\alpha_2) - \id), \\
E^{1,q}_{2} & = & \frac{\kernel((K_q(\alpha_2)-\id)\oplus (K_q(\alpha_1)-\id))}{\op{im}((K_q(\alpha_1)-\id,\ K_q(\alpha_2)-\id))},\\
E^{2,q}_2  & = & K_q(A) \big / \left\langle \op{im}(K_q(\alpha_1) - \id),\op{im}(K_q(\alpha_2) - \id) \right\rangle.
\end{array}
\]
Moreover, the $E_\infty$-term coincides with the $E_3$-term. Hence, up to group extension problems, $K_*(A\rtimes_\alpha\IZ^2)$ is uniquely determined by the induced action of $\alpha$ on $K$-theory and $d_2$.

Since $d_2:E_2\to E_2$ has bidegree $(2,-1)$, it reduces to
\[
d^{0,q}_2:E^{0,q}_2\to E^{2,q-1}_2,\quad q=0,1.
\]
Consider the following commutative diagram with exact rows
\[
\xymatrix{
0 \ar[r] & SA \ar[r] \ar[d]^{S\alpha_2} & \CM_{\alpha_1}(A) \ar[r]^/0.15cm/{\op{ev}_0} \ar[d]^{\tilde{\alpha}_2} & A \ar[r] \ar[d]^{\alpha_2}& 0 \\
0 \ar[r] & SA \ar[r] & \CM_{\alpha_2}(A) \ar[r]^/0.15cm/{\op{ev}_0} & A \ar[r] & 0
}
\]
and the induced diagram, which is obtained by passing to the respective six-term exact sequences in $K$-theory (and applying Bott periodicity)
\[
\xymatrix@C+0.25cm{
0 \ar[r] & \coker(d_{*-1}(\alpha_1)) \ar[r] \ar[d]^{\op{co}(d_{*-1}(\alpha_2))} & K_*(\CM_{\alpha_{1}}(A)) \ar[r]^{K_*(\op{ev}_0)} \ar[d]^{d_*(\tilde{\alpha}_2)} & \ker(d_*(\alpha_1)) \ar[d]^{\op{k}(d_*(\alpha_2))} \ar[r]& 0\\
0 \ar[r] & \coker(d_{*-1}(\alpha_1)) \ar[r] & K_*(\CM_{\alpha_1}(A)) \ar[r]^{K_*(\op{ev}_0)} & \ker(d_*(\alpha_1)) \ar[r] & 0
}
\]
By applying the Snake Lemma (see for example \cite[1.3.2]{Weibel1994}) to this diagram, we obtain a group homomorphism
\[
d_*(\alpha):E^{0,*}_2\to E^{2,*-1}_2,
\]
which, as the next result shows, coincides with $d_2^{0,*}$.

\begin{prop}
\label{differentialObstructionHomomorphism}
Let $A$ be a \cstar-algebra and $\alpha:\IZ^2\curvearrowright A$ an action. Then the associated second page differential $d_2$ satisfies $d_2^{0,q}=d_q(\alpha)$ for $q=0,1$.
\end{prop}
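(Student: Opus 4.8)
The plan is to reduce the abstract exact-couple description of $d_2^{0,q}$ to the concrete boundary map supplied by Corollary \ref{differentialsE2}, and then to identify that boundary map with the Snake Lemma connecting homomorphism by exhibiting a morphism of extensions and invoking naturality of the six-term sequence. For $n=2$, $p=0$ and the empty tuple $\mu$ we have $\mu^\bot=(1,2)$ and $T(2,2)=\{(1,2)\}$, so Corollary \ref{differentialsE2} gives, for $x\in E_2^{0,q}$, a lift $y\in K_q(F_1)$ of $x$ under the corner evaluation $F_1\to F_0=A$, together with the formula $d_2^{0,q}(x)=[\rho((1,2))_q(y)]\in E_2^{2,q-1}$, where $\rho((1,2))_q\colon K_q(F_1)\to K_{q+1}(S^2A)$ is the boundary map of $0\to S^2A\to \CM_\alpha(A)\to F_1\to 0$ and $K_{q+1}(S^2A)\cong K_{q-1}(A)$ by Bott periodicity. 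It remains to show this class equals $d_q(\alpha)(x)$.

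First I would record two structural facts. Writing $M:=\CM_{\alpha_1}(A)$, commutativity of $\alpha_1$ and $\alpha_2$ yields a natural isomorphism $\CM_\alpha(A)\cong \CM_{\tilde\alpha_2}(M)$ under which evaluation $\op{ev}_{t_2=0}\colon \CM_\alpha(A)\to M$ becomes restriction to the edge $t_2=0$; recall that the boundary map of a mapping torus extension $0\to SB\to \CM_\beta(B)\xrightarrow{\op{ev}_0}B\to 0$ is, after Bott periodicity, $K_*(\beta)-\id$ (the computation underlying the identification of $d_1$ with $d_{PV}$), so the boundary $\partial^{\mathrm{out}}$ of $0\to SM\to \CM_{\tilde\alpha_2}(M)\xrightarrow{\op{ev}_{t_2=0}} M\to 0$ is exactly $d_*(\tilde\alpha_2)$, the middle vertical map of the Snake diagram. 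Second, the ideal $I_2=\ker(\CM_\alpha(A)\to F_1)$ sits inside $SM$: a function vanishing on $X_1$ vanishes on the edge $t_2=0$ (hence on $t_2=1$ by the boundary condition), and on the edge $t_1=0$ it takes values in the inner ideal $SA=\ker(\op{ev}_0^{(1)}\colon M\to A)$. Inspecting this shows that, under $I_2\cong S^2A$, the inclusion $\iota\colon I_2\hookrightarrow SM$ is the suspension $S(j)$ of the inner ideal inclusion $j\colon SA\hookrightarrow M$, whence $K_{q+1}(\iota)$ is identified with $K_q(j)\colon K_q(SA)\to K_q(M)$ via the suspension isomorphisms.

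These facts assemble into a morphism from the skeleton extension $0\to I_2\to \CM_\alpha(A)\xrightarrow{\op{res}_{X_1}} F_1\to 0$ to the mapping torus extension $0\to SM\to \CM_\alpha(A)\xrightarrow{\op{ev}_{t_2=0}} M\to 0$, with left map $\iota$, middle map the identity, and right map $r_1\colon F_1\to M$ the restriction to $t_2=0$ (the squares commute because $\op{ev}_{t_2=0}=r_1\circ \op{res}_{X_1}$). Now I would chase $x$. Put $\xi:=K_q(r_1)(y)\in K_q(M)$; since corner evaluation factors as $\op{ev}_0^{(1)}\circ r_1$, one has $\op{ev}_{0*}^{(1)}(\xi)=x$, so $\xi$ is a legitimate lift for the Snake computation. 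Naturality of the connecting homomorphism gives $d_q(\tilde\alpha_2)(\xi)=\partial^{\mathrm{out}}(K_q(r_1)(y))=K_{q+1}(\iota)(\rho((1,2))_q(y))=K_q(j)(\rho((1,2))_q(y))$. As $\im(K_q(j))=\ker(\op{ev}_{0*}^{(1)})$ realizes the inclusion $\coker(d_{q-1}(\alpha_1))\hookrightarrow K_q(M)$, the Snake preimage of $d_q(\tilde\alpha_2)(\xi)$ in $\coker(d_{q-1}(\alpha_1))$ is the class of $\rho((1,2))_q(y)$; projecting to $E_2^{2,q-1}=\coker(\op{co}(d_{q-1}(\alpha_2)))$ yields $d_q(\alpha)(x)=[\rho((1,2))_q(y)]=d_2^{0,q}(x)$.

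The hard part will be the bookkeeping that glues the two different extensions together: correctly identifying the skeleton ideal $I_2\cong S^2A$ and its inclusion into $SM$ with a suspended inner ideal inclusion, and tracking the several Bott/suspension isomorphisms so that source and target groups match on the nose. (The sign subtleties of Theorem \ref{partialDescriptionDifferentials} are harmless here, since for $\mu$ empty and $\lambda=(1,2)$ the permutation $\sigma(\nu)$ is trivial.) Everything else is naturality of the six-term exact sequence together with the identification $\CM_\alpha(A)\cong \CM_{\tilde\alpha_2}(\CM_{\alpha_1}(A))$.
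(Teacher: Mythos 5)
Your proposal is correct and follows essentially the same route as the paper: both arguments compare the skeleton extension $0\to S^2A\to \CM_\alpha(A)\to F_1\to 0$ with the mapping torus extension $0\to S\CM_{\alpha_1}(A)\to \CM_\alpha(A)\to \CM_{\alpha_1}(A)\to 0$ via a morphism of extensions with identity middle map, and then use naturality of the boundary map together with the definition of the Snake Lemma homomorphism to identify $d_2^{0,q}$ with $d_q(\alpha)$. The only cosmetic difference is that you invoke Corollary \ref{differentialsE2} where the paper simply unwinds the exact-couple definition of $d_2$, and you spell out the identification of the ideal inclusion $S^2A\hookrightarrow S\CM_{\alpha_1}(A)$ as a suspended inner-ideal inclusion, which the paper leaves implicit in its displayed $K$-theory diagram.
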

\begin{proof}
Recall the mapping torus cofiltration \eqref{cofiltrationMappingTorus} associated with $\alpha$
\[
\xymatrix{
	\CM_\alpha(A) \ar@{->>}[r]^/0.1cm/{\pi_2} & F_1 \ar@{->>}[r]^{\pi_1} & A \ar[r] & 0.
	}
\]
In this case, $F_1$ is given as the pullback of $\CM_{\alpha_1}(A)$ and $\CM_{\alpha_2}(A)$ along the respective evaluations at $0$. Consider the commutative diagram with exact rows
\[
\xymatrix{
0 \ar[r] & S^2 A \ar[r] \ar@{^(->}[d] & \CM_\alpha(A) \ar[r]^/0.1cm/{\pi_2} \ar@{=}[d] & F_1 \ar[r] \ar@{->>}[d]^/-0.1cm/{\pi} & 0\\
0 \ar[r] & S\CM_{\alpha_1}(A) \ar[r] & \CM_\alpha(A) \ar[r]^/-0.1cm/{\op{ev}_0} & \CM_{\alpha_1}(A) \ar[r] & 0 	
	}
\]
Naturality of $K$-theory and Bott periodicity give rise to a commutative diagram
\[
	\xymatrix{
K_{q}(F_1)\ar[r]^/-0.2cm/{\rho_q} \ar[d]^{K_{q}(\pi)} & K_{q-1}(S^2 A)\ar[r]^{\cong} \ar[d] & K_{q-1}(A) \ar@{->>}[r] \ar[d] &	 E_2^{2,q-1}\\
K_{q}(\CM_{\alpha_1}(A)) \ar[r]^/-0.2cm/{\rho_q} \ar@/_.7cm/[rr]_{K_{q}(\tilde{\alpha}_2)-\id} & K_{q-1}(S\CM_{\alpha_1}(A)) \ar[r]^/0.1cm/{\cong}  & K_q(\CM_{\alpha_1}(A)) & 
	}
\]
Denote by $d:K_q(F_1)\to E_2^{2,q-1}$ the map that is obtained by following the upper row of the last diagram. Let $x\in E_2^{0,q} = \op{im}(K_q(\pi_1))\subseteq K_q(A)$ be given and take some $\bar{x}\in K_q(F_1)$ with $K_q(\pi_1)(\bar{x})=x$. By the definition of the second page differential, $d_2^{0,q}(x)=d(\bar{x})$. On the other hand, it is clear that $K_q(\pi)(\bar{x})$ defines a lift for $x$ under $K_q(\op{ev_0}):K_q(\CM_{\alpha_1}(A))\to K_q(A)$. By the definition of the Snake Lemma homomorphism, we get that $d_q(\alpha)(x)=d(\bar{x})$. This concludes the proof.
\end{proof}

The following result now follows easily from Corollary \ref{differentialsE2}.

\begin{cor}
\label{completeDescriptiond2}
Let $A$ be a \cstar-algebra and $\alpha:\IZ^n\curvearrowright A$ an action with the property that $K_*(\alpha_i)=\id$ for $i=1,\ldots,n$. Then
\[
d_2^{p,q}(x\otimes e)=\sum_{\mu\in T(2,n)} d_{q}(\alpha(\mu))(x)\otimes (e\wedge e_\mu)
\]
for every $x\otimes e \in E_1^{p,q}=E_2^{p,q}$. 
\end{cor}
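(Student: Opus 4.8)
The plan is to deduce the formula directly from Corollary \ref{differentialsE2}, using the hypothesis to collapse the first page. Since $K_*(\alpha_i)=\id$, we have $d_*(\alpha_i)=K_*(\alpha_i)-\id=0$ for every $i$, so the identification of $(E_1,d_1)$ with the Pimsner-Voiculescu complex forces $d_1=d_{PV}=0$. Hence $E_2=E_1$ and, under $E_1\cong C_{PV}$, one has $E_2^{p,q}=K_q(A)\otimes_\IZ\Lambda^p(\IZ^n)$ with no passage to a proper subquotient. In particular every class is decomposable, $x\otimes e_\nu\in E_2^{p,q}$ holds for every $\nu\in T(p,n)$ and every $x\in K_q(A)$, and the hypotheses of Corollary \ref{differentialsE2} are satisfied automatically. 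For $p>n-2$ both sides of the asserted identity vanish (the target $\Lambda^{p+2}(\IZ^n)$ is zero), so we may assume $0\leq p\leq n-2$. By bilinearity of $d_2^{p,q}$ it then suffices to treat a single generator $x\otimes e_\nu$ with $\nu\in T(p,n)$.

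For such a generator I would invoke Corollary \ref{differentialsE2} verbatim. Writing $y\in K_q(F(\nu^\bot)_1)$ for a lift of $x$ and, for $\mu\in T(2,n)$, $y_\mu\in K_q(F(\mu)_1)$ for the image of $y$ (zero unless $\mu\subseteq\nu^\bot$), it gives
\[
d_2^{p,q}(x\otimes e_\nu)=\Bigl[\,\sum_{\mu\in T(2,n)}\rho(\mu)_q(y_\mu)\otimes(e_\nu\wedge e_\mu)\,\Bigr]\in E_2^{p+2,q-1}.
\]
Comparing with the claimed formula, everything reduces to the identification $\rho(\mu)_q(y_\mu)=d_q(\alpha(\mu))(x)$ for each $\mu\in T(2,n)$.

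This identification is the step carrying the real content. When $\mu\not\subseteq\nu^\bot$ there is nothing to prove, since $e_\nu\wedge e_\mu=0$ makes the corresponding summand vanish on both sides. When $\mu\subseteq\nu^\bot$, I would first note that the restriction surjection $F(\nu^\bot)_1\to F(\mu)_1$ of \eqref{restrictionMappingTorus} is compatible with the evaluations onto $F(\nu^\bot)_0=F(\mu)_0=A$, so that $y_\mu$ is a genuine lift of $x$ under $K_q(F(\mu)_1)\to K_q(A)$. Since $\rho(\mu)_q$ is the boundary map of the extension $0\to S^2A\to\CM_{\alpha(\mu)}(A)\to F(\mu)_1\to 0$, which depends only on the $\IZ^2$-subaction $\alpha(\mu)$, Corollary \ref{differentialsE2} applied to the (again $K$-trivial) action $\alpha(\mu)$ with $p=0$ shows that $\rho(\mu)_q(y_\mu)$ represents the second page differential $d_2^{0,q}(x)$ of the spectral sequence for $\alpha(\mu)$. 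As $\alpha(\mu)$ is $K$-trivial, its second page equals its first, so the representative is the class itself, and Proposition \ref{differentialObstructionHomomorphism} identifies that class with $d_q(\alpha(\mu))(x)$. Hence $\rho(\mu)_q(y_\mu)=d_q(\alpha(\mu))(x)$.

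Substituting this into the displayed formula, and using once more that in $E_2^{p+2,q-1}=E_1^{p+2,q-1}$ the bracketed class equals the element it represents, yields the asserted identity on generators; bilinearity then extends it to all of $E_2^{p,q}$. The only genuine subtlety beyond citing the two earlier results is the lift-compatibility observation ensuring that the ambient lift $y$ maps to an admissible lift $y_\mu$ in each subaction's spectral sequence, the wedge-sign bookkeeping being already absorbed into $e_\nu\wedge e_\mu$.
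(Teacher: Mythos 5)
Your argument is correct and is exactly the route the paper intends: the paper offers no written proof beyond the remark that the corollary ``follows easily from Corollary \ref{differentialsE2}'', and you supply precisely the expected details --- $d_1=d_{PV}=0$ forces $E_2=E_1$ so every $x\otimes e_\nu$ is an admissible class, Corollary \ref{differentialsE2} gives the sum of boundary terms, and Proposition \ref{differentialObstructionHomomorphism} (together with the compatibility of the lifts under $F(\nu^\bot)_1\to F(\mu)_1$) identifies each $\rho(\mu)_q(y_\mu)$ with $d_q(\alpha(\mu))(x)$. No gaps.
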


The isomorphism $K_*(A\rtimes_\alpha \IZ)\cong K_{*+1}(\CM_\alpha(A))$ described by Paschke \cite{Paschke1983} intertwines the automorphisms $K_*(\check{\alpha}_2)$ and $K_{*+1}(\tilde{\alpha}_2)$, see also \cite[Theorem 1.2.6]{Barlak2014}. Therefore, Proposition \ref{differentialObstructionHomomorphism} yields that $d_2^{0,*+1}$ can be regarded as the Snake Lemma homomorphism of
\[
\xymatrix{
0 \ar[r] & \coker(d_*(\alpha_1)) \ar[r] \ar[d]^{\op{co}(d_*(\alpha_2))}& K_*(A\rtimes_{\alpha_1}\IZ) \ar[r]^{\rho_*} \ar[d]^{d_*(\check{\alpha}_2)}& \ker(d_{*+1}(\alpha_1)) \ar[d]^{\op{k}(d_{*+1}(\alpha_2))}\ar[r]& 0\\
0 \ar[r] & \coker(d_*(\alpha_1)) \ar[r] & K_*(A\rtimes_{\alpha_{1}}\IZ) \ar[r]^{\rho_*} & \ker(d_{*+1}(\alpha_1)) \ar[r]& 0\\
}
\]
which is induced by the naturality of the Pimsner-Voiculescu sequence applied to $\alpha_2:(A,\alpha_1,\IZ)\stackrel{\cong}{\longrightarrow}(A,\alpha_1,\IZ)$. This characterisation and Proposition \ref{index1} now permit the following description of $d_2^{0,0}$, provided that $A$ is unital.

\begin{theorem}
\label{obstructiond0}
Let $A$ be a unital \cstar-algebra and $\alpha:\IZ^2\curvearrowright A$ an action. Let $p\in \CP_k(A)$ be a projection satisfying $[p]\in E_2^{0,0}$. Find $l,m\geq0$ and unitaries $v,w\in \CU_n(A)$ with
\[
\alpha^{(n)}_1(q) = vqv^*\quad\text{and}\quad \alpha^{(n)}_2(q) = wqw^*,
\]
where $n:=k+l+m$ and $q:=\diag(p,1_l,0_m)\in\CP_n(A)$. Then
\[
d_2^{0,0}([p])=\left[\kappa(q,w^*\alpha^{(n)}_2(v)^*\alpha^{(n)}_1(w)v)\right]\in E_2^{2,-1}.
\]
\end{theorem}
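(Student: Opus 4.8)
The plan is to realise $d_2^{0,0}$ as a Snake Lemma connecting homomorphism and then feed in the explicit lift provided by Proposition~\ref{index1}. By Proposition~\ref{differentialObstructionHomomorphism} together with the reformulation via Paschke's isomorphism stated just before the theorem, $d_2^{0,0}$ coincides with the Snake Lemma homomorphism of the Pimsner--Voiculescu ladder for $\alpha_1$ induced by the equivariant automorphism $\alpha_2\colon(A,\alpha_1,\IZ)\to(A,\alpha_1,\IZ)$, whose crossed-product extension is $\check\alpha_2$. Concretely, to compute $d_2^{0,0}([p])$ for $[p]\in E_2^{0,0}=\kernel(\op{k}(d_0(\alpha_2)))$ I would (i) choose a lift $\xi\in K_1(A\rtimes_{\alpha_1}\IZ)$ with $\rho_1(\xi)=[p]$, (ii) apply $d_1(\check\alpha_2)=K_1(\check\alpha_2)-\id$, and (iii) use that $\rho_1(d_1(\check\alpha_2)(\xi))=\op{k}(d_0(\alpha_2))([p])=0$ to recognise $d_1(\check\alpha_2)(\xi)$ as the image under the canonical inclusion $j\colon A\into A\rtimes_{\alpha_1}\IZ$ of a class in $K_1(A)$; that class, read in $E_2^{2,-1}$, is $d_2^{0,0}([p])$.

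For step (i) I would invoke Proposition~\ref{index1} for the automorphism $\alpha_1$, with $v$ implementing $\alpha_1$ on $q$. Writing $u\in A\rtimes_{\alpha_1}\IZ$ for the canonical implementing unitary, this yields the explicit lift
\[
\xi:=\kappa(q,v^*u^{(n)})-[u^{(l)}],\qquad \rho_1(\xi)=[p].
\]
Since $\check\alpha_2$ fixes $u$ and restricts to $\alpha_2$ on $A$, functoriality of $\kappa(\cdot,\cdot)$ together with $\alpha_2^{(n)}(q)=wqw^*$ gives $K_1(\check\alpha_2)([u^{(l)}])=[u^{(l)}]$ and $K_1(\check\alpha_2)(\kappa(q,v^*u^{(n)}))=\kappa(wqw^*,\alpha_2^{(n)}(v)^*u^{(n)})$, so that
\[
d_1(\check\alpha_2)(\xi)=\kappa(wqw^*,\alpha_2^{(n)}(v)^*u^{(n)})-\kappa(q,v^*u^{(n)}).
\]

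The computational heart of the argument is to collapse this difference to a single Bott element over $A$. For this I would use three elementary identities for the $K_1$-Bott element $\kappa(q,\cdot)$: conjugation invariance $\kappa(wqw^*,b)=\kappa(q,w^*bw)$, which holds because conjugation by a fixed unitary preserves $K_1$-classes; the covariance relation $u^{(n)}w=\alpha_1^{(n)}(w)u^{(n)}$ in the crossed product; and additivity $\kappa(q,ab)=\kappa(q,a)+\kappa(q,b)$ for unitaries $a,b$ commuting with $q$. The first two rewrite the first summand as $\kappa(q,w^*\alpha_2^{(n)}(v)^*\alpha_1^{(n)}(w)u^{(n)})$, and additivity then cancels the $u^{(n)}$-factors, leaving $\kappa(q,s)$ with $s:=w^*\alpha_2^{(n)}(v)^*\alpha_1^{(n)}(w)v$. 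A direct calculation using that $\alpha_1,\alpha_2$ commute and that $\alpha_i^{(n)}(q)$ is implemented by $v$ and $w$ shows $sqs^*=q$, so $\kappa(q,s)$ is a well-defined class in $K_1(A)$ and $d_1(\check\alpha_2)(\xi)=K_1(j)(\kappa(q,s))$. Tracing back through the Snake Lemma then yields $d_2^{0,0}([p])=[\kappa(q,s)]\in E_2^{2,-1}$, as claimed.

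I expect the main obstacle to be bookkeeping rather than conceptual. At every stage one must verify that the unitaries involved---$v^*u^{(n)}$, $\alpha_2^{(n)}(v)^*u^{(n)}$, and $s$---actually commute with the relevant projection, so that the Bott elements and the additivity formula are legitimately defined; each such commutation reduces, via the relations $\alpha_1^{(n)}(q)=vqv^*$, $\alpha_2^{(n)}(q)=wqw^*$ and the commutativity of $\alpha_1$ and $\alpha_2$, to a short manipulation, but these need to be carried out with care. Independence of the final class from the choices of $v$, $w$ and of the lift $\xi$ requires no separate argument, being built into the well-definedness of the Snake Lemma homomorphism.
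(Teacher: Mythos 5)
Your proposal is correct and follows essentially the same route as the paper: identify $d_2^{0,0}$ with the Snake Lemma homomorphism of the Pimsner--Voiculescu ladder via Proposition~\ref{differentialObstructionHomomorphism}, lift $[p]$ to $\kappa(q,v^*u^{(n)})-[u^{(l)}]$ using Proposition~\ref{index1}, and collapse $(K_1(\check{\alpha}_2)-\id)$ of that lift to a single Bott element by conjugation invariance, the covariance relation $u^{(n)}w{u^{(n)}}^*=\alpha_1^{(n)}(w)$, and additivity. The intermediate manipulations match the paper's computation step for step, so no further comparison is needed.
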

\begin{proof}
If $\rho_1:K_1(A\rtimes_{\alpha_1}\IZ)\to K_0(A)$ denotes the index map of the Pimsner-Voiculescu sequence for $\alpha_1$, then Proposition \ref{index1} yields
\[
\rho_1(\kappa(q,v^*u^{(n)})-[u^{(l)}])=[p].
\]
One computes that
\[
\def\arraystretch{1.8}
\begin{array}{lll}
(K_0(\check{\alpha}_2)-\id)(\kappa(q,v^*u^{(n)})-[u^{(l)}])  & &\\
 & \hspace{-0.9cm} = & \hspace{-0.7cm} (K_0(\check{\alpha}_2)-\id)(\kappa(q,v^*u^{(n)}))\\
 & \hspace{-0.9cm} = & \hspace{-0.7cm} \kappa(\alpha^{(n)}_2(q),\alpha^{(n)}_2(v)^*u^{(n)}) - \kappa(q,v^*u^{(n)})\\
 & \hspace{-0.9cm} = & \hspace{-0.7cm} \kappa(wqw^*,\alpha^{(n)}_2(v)^*u^{(n)})+\kappa(q,{u^{(n)}}^*v)\\
 & \hspace{-0.9cm} = & \hspace{-0.7cm} \kappa(q,w^*\alpha^{(n)}_2(v)^*u^{(n)}w)+\kappa(q,{u^{(n)}}^*v)\\
 & \hspace{-0.9cm} = & \hspace{-0.7cm} \kappa(q,w^*\alpha^{(n)}_2(v)^*u^{(n)}w{u^{(n)}}^*v)\\
 & \hspace{-0.9cm} = & \hspace{-0.7cm}  \kappa(q,w^*\alpha^{(n)}_2(v)^*\alpha^{(n)}_1(w)v)\in K_1(A).
\end{array}
\]
By the definition of the Snake Lemma homomorphism, we get that 
\[
d_0(\alpha)([p])=\left[\kappa(q,w^*\alpha^{(n)}_2(v)^*\alpha^{(n)}_1(w)v)\right]\in E_2^{2,-1}.
\]
The proof now follows from Proposition \ref{differentialObstructionHomomorphism}.
\end{proof}

Observe that $d_2^{0,0}$ is completely determined by Theorem \ref{obstructiond0}. In fact, given $g\in E_2^{0,0}$, there is a projection $p\in \CP_m(A)$ and some $n\geq 0$ such that $g=[p]-[1_n]\in K_0(A)$. In this situation, $[p]\in E_2^{0,0}$ and $d_2^{0,0}(g)=d_2^{0,0}([p])$.

For the description of $d_2^{0,1}:E_2^{0,1}\to E_2^{2,0}$, we need the following perturbation result.

\begin{lemma}
\label{smallHomotopiesUnitaries}
Let $0<\eps< \frac{2}{3}$. Let $A$ be unital \cstar-algebra and $u,\bar{u},v\in \CU(A)$ unitaries satisfying 
\[
\Vert u-\bar{u}\Vert,\ \Vert \left[u,v\right] \Vert\leq\eps.
\]
Then there is a homotopy $u_t\in \CU(A)$ with $u_0=u$, $u_1=\bar{u}$, and $\Vert \left[u_t,v\right]\Vert\leq3\eps$ for all $t\in[0,1]$.
\end{lemma}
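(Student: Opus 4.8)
The plan is to build the homotopy directly from a logarithm of the unitary $w:=\bar u u^*$, and to control $\Vert[u_t,v]\Vert$ \emph{not} through a product rule in $w$ (which turns out to be lossy at intermediate times), but by keeping the entire path uniformly close to $u$.

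First I would observe that $\Vert w-1\Vert=\Vert(\bar u-u)u^*\Vert=\Vert\bar u-u\Vert\le\eps<2$, so that $-1$ does not lie in the spectrum of $w$. Consequently the principal branch of the logarithm is holomorphic on a neighbourhood of the spectrum of $w$, and $h:=-i\operatorname{Log}(w)$ is a well-defined self-adjoint element with spectrum contained in $(-\pi,\pi)$ and $\exp(ih)=w$. I then set
\[
u_t:=\exp(ith)u,\qquad t\in[0,1],
\]
which is a norm-continuous path of unitaries with $u_0=u$ and $u_1=\exp(ih)u=wu=\bar u$.

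The decisive step is the estimate $\Vert u_t-u\Vert\le\eps$ for all $t$. Since $\Vert u_t-u\Vert=\Vert(\exp(ith)-1)u\Vert=\Vert\exp(ith)-1\Vert$, this reduces, via continuous functional calculus applied to $h$, to the elementary fact that for a fixed angle $\theta\in(-\pi,\pi)$ the chord length $|e^{it\theta}-1|=2|\sin(t\theta/2)|$ is nondecreasing in $t\in[0,1]$, because $t\theta/2$ stays in $[-\pi/2,\pi/2]$ where $|\sin|$ grows with the modulus of its argument. Taking the supremum over $\theta$ in the spectrum of $h$ then yields $\Vert\exp(ith)-1\Vert\le\Vert\exp(ih)-1\Vert=\Vert w-1\Vert\le\eps$. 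I expect this monotonicity of $t\mapsto\Vert\exp(ith)-1\Vert$ to be the only genuine obstacle; everything else is bookkeeping.

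With this in hand the commutator bound is immediate. Writing $[u_t,v]=[u,v]+[u_t-u,v]$ and using $\Vert[a,v]\Vert\le2\Vert a\Vert$ for the unitary $v$, I obtain
\[
\Vert[u_t,v]\Vert\le\Vert[u,v]\Vert+2\Vert u_t-u\Vert\le\eps+2\eps=3\eps
\]
for every $t\in[0,1]$, as required. Note that only $\eps<2$ is actually used to produce the logarithm; the sharper hypothesis $\eps<\tfrac{2}{3}$ is precisely what guarantees that the resulting bound $3\eps$ stays below $2$, which is what makes such a path usable in homotopy-invariance statements like Proposition \ref{propertiesBott0}.
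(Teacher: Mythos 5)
Your proposal is correct and follows essentially the same route as the paper: both construct the path via the principal logarithm of the unitary relating $u$ and $\bar u$ (the paper uses $u_t=u\exp(ith)$ with $h=-i\log(u^*\bar u)$, you use the left-multiplied variant), both rely on the same monotonicity of $t\mapsto\Vert\exp(ith)-1\Vert$ to get $\Vert u_t-u\Vert\le\eps$, and both conclude with the triangle/commutator estimate giving $3\eps$. No gaps.
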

\begin{proof}
Since $\Vert u-\bar{u}\Vert<\frac{2}{3}$, the spectrum of $u^*\bar{u}$ does not contain $-1$. Therefore, we can define $h:=-i\log(u^*\bar{u})\in A$, where $\log$ denotes the principal branch of the logarithm. This yields a continuous path of unitaries $u_t:=u\exp(ith)\in A$, $t\in [0,1]$, with $u_0=u$ and $u_1=u\exp(\log(u^*\bar{u}))=\bar{u}$. For $s,t\in [0,1]$,
\[
\Vert u_s-u_t\Vert = \Vert 1-\exp(i(s-t)h)\Vert \leq  \Vert 1-\exp(ih)\Vert = \Vert u-\bar{u}\Vert \leq \eps.
\]
One now computes
\[
\Vert \left[u_t,v\right] \Vert \leq \Vert u_tv-uv\Vert+\Vert vu_t-vu\Vert+\Vert \left[u,v\right]\Vert \leq 3\eps.
\]
\end{proof}

\begin{theorem}
\label{obstructiond1}
Let $A$ be a unital \cstar-algebra and $\alpha:\IZ^2\curvearrowright A$ an action. Let $v\in \CU_k(A)$ be a unitary satisfying $[v]\in E_2^{0,1}$. By Theorem \ref{DescriptionK1Dadarlat}, there are $l\geq0$, $\lambda_1,\ldots,\lambda_l\in \IT$, and unitaries $x,y\in \CU_m(A)$ such that
\[
\left\Vert\alpha^{(m)}_1(w)-xwx^*\right\Vert,\ \left\Vert\alpha^{(m)}_2(w)-ywy^*\right\Vert < \frac{1}{2},
\]
where $m:=k+l$ and $w:=\diag(v,\lambda_1,\ldots,\lambda_l)\in \CU_m(A)$. Then
\[
d_2^{0,1}([v])=\left[\kappa(y^*\alpha^{(m)}_2(x)^*\alpha^{(m)}_1(y)x,w)\right]\in E_2^{2,0}.
\]
\end{theorem}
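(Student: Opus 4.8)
The plan is to establish this as the $K_1$-analogue of Theorem~\ref{obstructiond0}: I would run the same Snake Lemma computation, but with the index map $\rho_0$ and the lifts of Proposition~\ref{index0} in place of $\rho_1$ and Proposition~\ref{index1}, the only genuinely new feature being that the equivariance is now merely approximate, which is what forces the use of Lemma~\ref{smallHomotopiesUnitaries}. By the discussion preceding Theorem~\ref{obstructiond0}, together with Proposition~\ref{differentialObstructionHomomorphism} and the Paschke identification, $d_2^{0,1}$ is the Snake Lemma homomorphism $\kernel(\op{k}(d_1(\alpha_2)))\to\coker(\op{co}(d_0(\alpha_2)))$ of the crossed-product diagram for running index $0$, induced by naturality of the Pimsner--Voiculescu sequence for $\alpha_1$ under $\alpha_2$. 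Since $[v]\in E_2^{0,1}\subseteq\kernel(d_1(\alpha_1))$, the unitaries $w=\diag(v,\lambda_1,\ldots,\lambda_l)$ and $x$ furnished by the hypotheses are exactly the data required by Proposition~\ref{index0} for $\alpha_1$, so that $\kappa(x^*u^{(m)},w)\in K_0(A\rtimes_{\alpha_1}\IZ)$ is a lift of $[v]$ under $\rho_0$.

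The heart of the argument is to compute $(K_0(\check{\alpha}_2)-\id)(\kappa(x^*u^{(m)},w))$ and to show it equals $K_0(j)(\kappa(y^*\alpha_2^{(m)}(x)^*\alpha_1^{(m)}(y)x,w))$, where $j\colon A\into A\rtimes_{\alpha_1}\IZ$. Because $\check{\alpha}_2$ fixes the implementing unitary $u$ and restricts to $\alpha_2$ on $A$, naturality of the Bott element gives $K_0(\check{\alpha}_2)(\kappa(x^*u^{(m)},w))=\kappa(\alpha_2^{(m)}(x)^*u^{(m)},\alpha_2^{(m)}(w))$. I would then manipulate this term via Proposition~\ref{propertiesBott0}. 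Conjugating both entries by $y^*$ (invariance under inner automorphisms, which act trivially on $K_0$) turns the second entry into $y^*\alpha_2^{(m)}(w)y$, which lies within $1/2$ of $w$; Lemma~\ref{smallHomotopiesUnitaries} then supplies a homotopy of unitaries from $y^*\alpha_2^{(m)}(w)y$ to $w$ along which the commutator with the first entry stays $\leq 3/2<2$, so homotopy invariance (Proposition~\ref{propertiesBott0}(i), transported to the second slot via (iv)) yields $\kappa(\alpha_2^{(m)}(x)^*u^{(m)},\alpha_2^{(m)}(w))=\kappa(y^*\alpha_2^{(m)}(x)^*u^{(m)}y,w)$. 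Subtracting $\kappa(x^*u^{(m)},w)$, rewriting $-\kappa(x^*u^{(m)},w)=\kappa({u^{(m)}}^*x,w)$ by Proposition~\ref{propertiesBott0}(iv), and collapsing the two terms by first-slot multiplicativity (Proposition~\ref{propertiesBott0}(iii) combined with (iv)) produces $\kappa(y^*\alpha_2^{(m)}(x)^*u^{(m)}y\,{u^{(m)}}^*x,w)$. Finally $u^{(m)}y\,{u^{(m)}}^*=\alpha_1^{(m)}(y)$ simplifies the first entry to $y^*\alpha_2^{(m)}(x)^*\alpha_1^{(m)}(y)x\in M_m(A)$, so the whole expression is $K_0(j)$ of a Bott element living in $K_0(A)$, as claimed.

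To finish, I would invoke exactness of the Pimsner--Voiculescu sequence: $K_0(j)$ restricts to an injection on $\coker(d_0(\alpha_1))$, so the computation identifies the preimage of $(K_0(\check{\alpha}_2)-\id)(\kappa(x^*u^{(m)},w))$ under the left inclusion as $[\kappa(y^*\alpha_2^{(m)}(x)^*\alpha_1^{(m)}(y)x,w)]\in\coker(d_0(\alpha_1))$. By the definition of the connecting map, its image in $E_2^{2,0}=\coker(\op{co}(d_0(\alpha_2)))$ is $d_2^{0,1}([v])$, which is precisely the asserted formula.

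The main obstacle is the quantitative bookkeeping, and the constants are genuinely tight: every Bott-element identity I use demands that the ambient commutators stay strictly below $2$. I would verify, by pushing the conjugations through the relation $u^{(m)}a\,{u^{(m)}}^*=\alpha_1^{(m)}(a)$ and using $\alpha_1\alpha_2=\alpha_2\alpha_1$, that $\|[\alpha_2^{(m)}(x)^*u^{(m)},\alpha_2^{(m)}(w)]\|<1/2$ and $\|[{u^{(m)}}^*x,w]\|<1/2$. Lemma~\ref{smallHomotopiesUnitaries} then gives the commutator bound $3/2$ at the homotopy's end, and the multiplicativity step requires $\|[w,y^*\alpha_2^{(m)}(x)^*u^{(m)}y]\|+\|[w,{u^{(m)}}^*x]\|<2$; here the first summand is $\leq 3/2$ and the second is strictly below $1/2$, so the sum is strictly below $2$. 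This strict inequality is exactly what the constant $1/2$ in the hypotheses secures, and once it is in place the algebraic skeleton is a direct transcription of the proof of Theorem~\ref{obstructiond0}.
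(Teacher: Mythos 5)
Your proposal is correct and follows essentially the same route as the paper's proof: lift $[v]$ to $\kappa(x^*u^{(m)},w)$ via Proposition \ref{index0}, compute $(K_0(\check{\alpha}_2)-\id)$ of it using naturality of Bott elements, Lemma \ref{smallHomotopiesUnitaries}, and Proposition \ref{propertiesBott0}, and read off the Snake Lemma homomorphism via Proposition \ref{differentialObstructionHomomorphism}. The only (immaterial) difference is that you conjugate by $y^*$ before invoking the homotopy lemma rather than after, and your bookkeeping of the constants matches the paper's.
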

\begin{proof}
Using the isomorphism $M_m(A\rtimes_{\alpha_1}\IZ)\cong M_m(A)\rtimes_{\alpha_1^{(m)}}\IZ$, we compute that
\[
\left\Vert\left[x^*u^{(m)},w\right]\right\Vert = \left\Vert u^{(m)}w {u^{(m)}}^*-xw x^*\right\Vert = \left\Vert \alpha_1^{(m)}(w)-xw x^*\right\Vert\\
  <  \frac{1}{2}.
\]
By Proposition \ref{index0}, the boundary map $\rho_0:K_0(A\rtimes_{\alpha_1}\IZ)\to K_1(A)$ of the Pimsner-Voiculescu sequence for $\alpha_1$ satisfies
\[
\rho_0(\kappa(x^*u^{(m)},w))=[v].
\]
The naturality of the Bott elements and part {\it iv)} of Proposition \ref{propertiesBott0} yield
\[
\def\arraystretch{1.5}
\begin{array}{lll}
(K_0(\check{\alpha}_{2})-\id)(\kappa(x^*u^{(m)},w)) & & \\
 & \hspace{-0.5cm} = & \hspace{-0.3cm} \kappa(\alpha^{(m)}_{2}(x)^*u^{(m)},\alpha^{(m)}_{2}(w))-\kappa(x^*u^{(m)},w)\\
& \hspace{-0.5cm} = & \hspace{-0.3cm} \kappa(\alpha^{(m)}_{2}(x)^*u^{(m)},\alpha^{(m)}_{2}(w))+\kappa({u^{(m)}}^{*}x,w).
\end{array}
\]
Since 
\[
\left\Vert\alpha^{(m)}_2(w)-ywy^*\right\Vert,\ \left\Vert\left[\alpha^{(m)}_2(x)^*u^{(m)},\alpha^{(m)}_2(w)\right]\right\Vert<\frac{1}{2},
\]
we can apply Lemma \ref{smallHomotopiesUnitaries} and find a homotopy $w_t\in \CU_m(A)$ between $\alpha^{(m)}_2(w)$ and $ywy^{*}$ such that
\[
\left\Vert\left[w_t,\alpha^{(m)}_2(x)^*u^{(m)}\right]\right\Vert<\frac{3}{2}\quad \text{for all}\ t\in[0,1].
\]
By part {\it i)} of Proposition \ref{propertiesBott0} and the naturality of the Bott elements, we obtain that
\[
\begin{array}{lcl}
\kappa(\alpha^{(m)}_{2}(x)^*u^{(m)},\alpha^{(m)}_{2}(w)) & = & \kappa(\alpha^{(m)}_{2}(x)^*u^{(m)},ywy^*) \\
 & = & \kappa(y^*\alpha^{(m)}_2(x)^*u^{(m)}y,w).
\end{array}
\]
Moreover,
\[
\left\Vert\left[{u^{(m)}}^*x,w\right]\right\Vert+\left\Vert\left[y^*\alpha^{(m)}_{2}(x)^*u^{(m)}y,w\right]\right\Vert<\frac{1}{2}+\frac{3}{2}=2,
\]
and therefore part {\it iii)} of Proposition \ref{propertiesBott0} yields
\[\def\arraystretch{1.5}
\begin{array}{lcl}
(K_0(\check{\alpha}_{2})-\id)(\kappa(x^*u^{(m)},w))
 & = & \kappa(y^*\alpha^{(m)}_2(x)^*u^{(m)}y,w)+\kappa({u^{(m)}}^*x,w)\\
 & = & \kappa(y^*\alpha^{(m)}_2(x)^*u^{(m)}y{u^{(m)}}^*x,w)\\
 & = & \kappa(y^*\alpha^{(m)}_2(x)^*\alpha^{(m)}_1(y)x,w)\in K_0(A).
\end{array}\]
By the definition of the Snake Lemma homomorphism, it follows that 
\[
d_1(\alpha)([v])=\left[\kappa(y^*\alpha^{(m)}_2(x)^*\alpha^{(m)}_1(y)x,w)\right]\in E_2^{2,0}.
\]
The proof now follows from Proposition \ref{differentialObstructionHomomorphism}.
\end{proof}


\section{Locally $KK$-trivial $\IZ^2$-actions on Kirchberg algebras}
\noindent
Let $A$ be a \cstar-algebra and $\alpha:\IZ^2\curvearrowright A$ an action with the property that $\alpha_1$ is homotopic to $\id_A$ in $\Aut(A)$. Fix a homotopy $\beta_t\in \Aut(A)$ between $\beta_0=\alpha_1$ and $\beta_1=\id_A$ and consider the induced $*$-automorphism $\phi\in \Aut(A\otimes \CC(\IT))$ given by
\[
\phi(f)(\exp(2\pi it))=\left(\beta_t\circ\alpha_2\circ\beta^{-1}_t\right)(f(\exp(2\pi it))),\; f\in A\otimes \CC(\IT),\ t\in[0,1].
\]
Note that $\phi$ is well-defined since $\alpha_1$ and $\alpha_2$ commute. Obviously, $\phi$ restricts to an automorphism $\phi^\prime:SA\stackrel{\cong}{\longrightarrow} SA$ and fits into the following commutative diagram with split-exact rows
\begin{align}
\label{commutativeAlpha0}
\begin{xy}
\xymatrix@C+0.2cm{
0 \ar[r] & SA \ar[r] \ar[d]^{\phi^\prime} & A\otimes \CC(\IT) \ar[r]^/0.25cm/{\op{ev}_1} \ar[d]^{\phi} & A \ar[d]^{\alpha_2} \ar[r] \ar@/_0.5cm/[l]_j & 0 \\
0 \ar[r] & SA \ar[r] & A\otimes \CC(\IT) \ar[r]^/0.25cm/{\op{ev}_1} & A \ar[r] \ar@/^0.5cm/[l]^j & 0
	}
\end{xy}
\end{align}
Here, $j:A \into A\otimes \CC(\IT)$ denotes the canonical embedding. Futhermore, there is a $*$-iso\-mor\-phism $\psi:\CM_{\alpha_1}(A)\stackrel{\cong}{\longrightarrow} A\otimes \CC(\IT)$ satisfying
\[
\psi(f)(\exp(2\pi it))=(\beta_t\circ\alpha_1^{-1})(f(t)),\quad f\in \CM_{\alpha_1}(a),\ t\in [0,1].
\]
Its restriction $\psi^{\prime}\in \Aut(SA)$ is homotopic to $\id_{SA}$ via $\psi^{\prime}_{s}\in \Aut(SA)$ given by
\[
\psi^{\prime}_{s}(f)(t)=(\beta_{st}\circ\alpha_{1}^{-1})(f(t)),\quad f\in SA,\ s,t\in[0,1].
\]

\begin{prop}
\label{ObstructionMapCircle}
Let $A$ be a \cstar-algebra and $\alpha:\IZ^2\curvearrowright A$ an action with the property that $\alpha_1$ is homotopic to $\id_A$ in $\Aut(A)$. Consider the commutative diagram with split-exact rows
\begin{align}
\label{diagramObstructionTorus}
\begin{xy}
	\xymatrix{
		0 \ar[r] & K_{*-1}(A) \ar[r] \ar[d] & K_*(A\otimes \CC(\IT)) \ar[r] \ar[d]^{K_*(\phi)-\id} & K_*(A) \ar[d]^{K_{*}(\alpha_2)-\id} \ar[r] \ar@/_0.5cm/[l]& 0 \\
		0 \ar[r] & K_{*-1}(A)\ar[r] & K_*(A\otimes \CC(\IT)) \ar[r] & K_*(A) \ar[r] \ar@/^0.5cm/[l]& 0
	}
\end{xy}
\end{align}
obtained from \eqref{commutativeAlpha0}. Then the associated Snake Lemma homomorphism $h_*$ coincides with $d_2^{0,*}$.
\end{prop}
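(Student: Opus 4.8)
The plan is to identify $h_*$ with the Snake Lemma homomorphism $d_*(\alpha)$ of Proposition \ref{differentialObstructionHomomorphism}, after which $h_* = d_2^{0,*}$ follows from that proposition. Recall that $d_*(\alpha)$ is the connecting homomorphism of the $K$-theory diagram attached to the mapping torus extension $0\to SA\to \CM_{\alpha_1}(A)\xrightarrow{\op{ev}_0} A\to 0$ equipped with the vertical endomorphism induced by the automorphisms $(S\alpha_2,\tilde{\alpha}_2,\alpha_2)$. Since $\alpha_1\simeq \id_A$ forces $K_*(\alpha_1)=\id$, the boundary map of this extension vanishes, so the six-term sequence collapses to the short exact sequence $0\to K_{*-1}(A)\to K_*(\CM_{\alpha_1}(A))\to K_*(A)\to 0$; under $\alpha_1\simeq\id_A$ this also identifies $E_2^{0,*}=\ker(K_*(\alpha_2)-\id)$ and $E_2^{2,*-1}=\coker(K_{*-1}(\alpha_2)-\id)$.

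First I would promote the isomorphism $\psi$ to a morphism between the two endomorphism-extension data. A direct evaluation at $\exp(2\pi it)$ shows $\op{ev}_1\circ\psi=\op{ev}_0$, so $\psi$ carries the mapping torus extension isomorphically onto $0\to SA\to A\otimes\CC(\IT)\xrightarrow{\op{ev}_1}A\to 0$, restricting to $\psi^\prime$ on the kernel and inducing $\id_A$ on the quotient. The central point is that $\psi$ intertwines the implementing automorphisms, $\psi\circ\tilde{\alpha}_2=\phi\circ\psi$: evaluating both sides at $\exp(2\pi it)$ reduces this to the identity $\beta_t\circ\alpha_1^{-1}\circ\alpha_2=\beta_t\circ\alpha_2\circ\alpha_1^{-1}$, which holds precisely because $\alpha_1$ and $\alpha_2$ commute. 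Restricting to $SA$ gives $\psi^\prime\circ S\alpha_2=\phi^\prime\circ\psi^\prime$, and on the base one trivially has $\id_A\circ\alpha_2=\alpha_2\circ\id_A$. Thus $(\psi^\prime,\psi,\id_A)$ is an isomorphism of extensions intertwining the endomorphisms $(S\alpha_2,\tilde{\alpha}_2,\alpha_2)$ and $(\phi^\prime,\phi,\alpha_2)$.

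Then I would apply $K_*$ and invoke naturality of the Snake Lemma connecting homomorphism with respect to this isomorphism of data. The induced maps on the sub- and quotient-terms are $K_*(\psi^\prime)$ and $\id_{K_*(A)}$. Here the homotopy $\psi^\prime_s$ is used exactly to conclude $\psi^\prime\simeq\id_{SA}$, hence $K_*(\psi^\prime)=\id$; likewise the quotient map is $\id$ by $\op{ev}_1\circ\psi=\op{ev}_0$. Consequently both the source $\ker(K_*(\alpha_2)-\id)$ and the target $\coker(K_{*-1}(\alpha_2)-\id)$ of the two Snake homomorphisms are literally the same groups and are carried by the identity (the left vertical map of \eqref{diagramObstructionTorus} is $K_{*-1}(\phi^\prime)-\id$, and $K_{*-1}(\phi^\prime)=K_{*-1}(\alpha_2)$ follows from the intertwining together with $K_*(\psi^\prime)=\id$). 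Naturality therefore yields $d_*(\alpha)=h_*$, and Proposition \ref{differentialObstructionHomomorphism} gives $h_*=d_2^{0,*}$.

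I expect the main obstacle to be bookkeeping rather than conceptual difficulty: one must verify carefully that $\psi$ really furnishes a morphism of the two diagrams compatible with the endomorphisms (where the commutation $[\alpha_1,\alpha_2]=0$ is indispensable), and that the source and target identifications both reduce to the identity, for which the relation $\op{ev}_1\circ\psi=\op{ev}_0$ and the homotopy $\psi^\prime\simeq\id_{SA}$ are the decisive inputs.
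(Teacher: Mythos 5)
Your argument is correct and follows essentially the same route as the paper: both proofs promote $\psi$ to an isomorphism between the mapping torus extension (with the endomorphism triple $(S\alpha_2,\tilde{\alpha}_2,\alpha_2)$) and the $A\otimes\CC(\IT)$ extension (with $(\phi^\prime,\phi,\alpha_2)$), and then conclude by naturality of the Snake Lemma together with the fact that $K_*(\psi^\prime)=\id$ via the homotopy $\psi^\prime_s$. Your explicit verification of the intertwining $\psi\circ\tilde{\alpha}_2=\phi\circ\psi$ from $[\alpha_1,\alpha_2]=0$ is exactly the commutativity the paper encodes in its three-dimensional diagram.
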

\begin{proof}
The isomorphism $\psi:\CM_{\alpha_1}(A)\stackrel{\cong}{\longrightarrow} A\otimes \CC(\IT)$ and \eqref{commutativeAlpha0} induce the following commutative diagram
\[
\xymatrix@C-0.05cm{
0\ar[rr]&& SA \ar[rr] \ar[dd]|(0.5)\hole^/-0.5cm/{\phi^{\prime}} && A\otimes \CC(\IT) \ar[rr] \ar[dd]|(0.5)\hole^/-0.5cm/{\phi} & & A \ar[dd]|(0.5)\hole^/-0.5cm/{\alpha_2} \ar[r] & 0 \\
0 \ar[r] & SA \ar[rr] \ar[dd]^/-0.5cm/{S\alpha_2} \ar[ru]^{\psi^{\prime}} && \CM_{\alpha_1}(A) \ar[rr] \ar[dd]^/-0.5cm/{\tilde{\alpha}_2} \ar[ru]^{\psi} && A \ar[dd]^/-0.5cm/{\alpha_2} \ar[rr] \ar@{=}[ru] &&0 \\
0\ar[rr]|(0.47)\hole &&	SA \ar[rr]|(0.44)\hole && A\otimes \CC(\IT) \ar[rr]|(0.6)\hole && A \ar[r] &0 \\
0 \ar[r]  &	SA \ar[rr] \ar[ru]_{\psi^{\prime}} && \CM_{\alpha_1}(A) \ar[ru]_{\psi} \ar[rr] && A \ar@{=}[ru] \ar[rr] &&0
}
\]
Observe that all occurring rows are split-exact. Apply $K$-theory to the whole diagram and use Bott periodicity for the left hand square involving the suspensions of $A$. The Snake Lemma homomorphism of the resulting front diagram is $d_*(\alpha)=d_2^{0,*}$, and the one of the back side diagram is $h_*$. The naturality of the Snake Lemma and the fact that $\psi^\prime$ acts trivially on $K$-theory yield that these two Snake Lemma homomorphisms coincide.
\end{proof}

Let now $A$ be a Kirchberg algebra and $\alpha:\IZ^2\curvearrowright A$ an action. We follow \cite{IzumiMatui2010} and say that $\alpha$ is \emph{locally $KK$-trivial} if $KK(\alpha_1)=KK(\alpha_2)=1_A$. Moreover, we call two actions $\alpha,\beta:\IZ^2\curvearrowright A$ \emph{$KK$-trivially cocycle conjugate} if there exists an $\alpha$-cocyle $u$, that is, a map $u:\IZ^{2}\to \CU(\CM(A))$ satisfying $u_g\alpha_g(u_h)=u_{gh}$ for all $g,h\in \IZ^2$, and an automorphism $\mu\in \Aut(A)$ with $KK(\mu)=1_A$ such that $\ad(u_{e_i})\circ\alpha_i=\mu\circ\beta_i\circ\mu^{-1}$ for $i=1,2$.

Given a locally $KK$-trivial action $\alpha$ on a Kirchberg algebra $A$, Izumi and Matui associate an element $\Phi(\alpha)\in KK(A,SA)$ as follows. Since $KK(\alpha_1)=1_A\in KK(A,A)$, \cite[Theorem 4.1.1]{Phillips2000} yields a homotopy $\beta_t\in \Aut(A\otimes \CK)$ between $\beta_0=\alpha_1\otimes\id_{\CK}$ and $\beta_1=\id_{A\otimes\CK}$. As above, we define the automorphism $\phi\in \Aut(A\otimes \CK\otimes \CC(\IT))$ by
\[
\phi(f)(\exp(2\pi it))=\left(\beta_{t}\circ(\alpha_{2}\otimes\id_{\CK})\circ\beta^{-1}_{t}\right)(f(\exp(2\pi it)))
\]
for $f\in A\otimes \CK\otimes \CC(\IT)$ and $t\in[0,1]$.
Denote by $j:A\otimes\CK \into A\otimes\CK\otimes \CC(\IT)$ the canonical embedding. Using the stability of the $KK$-bifunctor and the fact that $KK(\alpha_2)=1_A$, we obtain that
\[
\Phi(\alpha):=KK(\phi\circ j)-KK(j)\in KK(A,SA)\subset KK(A,A\otimes \CC(\IT)).
\]
If
\[
\gamma_*:KK(A,SA)\to \Hom(K_*(A),K_{*-1}(A)),
\]
denotes the natural homomorphism, then Izumi's and Matui's result is given as follows.

\begin{theorem}[\cite{IzumiMatui2010}]
\label{IzumiMatui}
Let $A$ be a unital Kirchberg algebra. The assignment $\alpha\mapsto \Phi(\alpha)$ induces a well-defined bijection between the following two sets:
\begin{enumerate}[i)]
	\item $KK$-trivial cocycle conjugacy classes of locally $KK$-trivial outer $\IZ^2$-actions on $A$.
	\item $\left\lbrace x\in KK(A,SA) \ :\ \gamma_0(x)([1])=0\in K_1(A)\right\rbrace$.
\end{enumerate}
If $A$ is a stable Kirchberg algebra, then the statement remains true when we take $KK(A,SA)$ as a classifying invariant.
\end{theorem}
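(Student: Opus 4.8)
The plan is to verify that $\Phi$ is a well-defined invariant of $KK$-trivial cocycle conjugacy landing in the indicated target, and then to establish that the induced assignment is both injective and surjective; the stable statement follows by discarding the normalisation at the unit. For well-definedness, the automorphism $\phi$ depends on the chosen homotopy $\beta_t$ from $\alpha_1\otimes\id_\CK$ to $\id_{A\otimes\CK}$, furnished by \cite[Theorem 4.1.1]{Phillips2000}; any two such homotopies differ by a loop in $\Aut(A\otimes\CK)$ based at $\id$, and I would check that the resulting change in $\Phi(\alpha)$ is the image of a class under $\pi_1$ of the automorphism group, which vanishes in $KK(A,SA)$ since every such loop passes through $KK$-trivial automorphisms. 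Invariance under $KK$-trivial cocycle conjugacy is then obtained by conjugating the whole construction by the implementing $\mu$ with $KK(\mu)=1_A$ and absorbing the cocycle $u$ into the homotopy. For the normalisation, the right-hand vertical $\alpha_2$ in \eqref{commutativeAlpha0} satisfies $K_0(\alpha_2)=\id$, which forces $\Phi(\alpha)$ into the suspension direction, and evaluating at $[1]$ while using that $\alpha_2$ fixes the unit shows that the associated $K_1$-winding is trivial, giving $\gamma_0(\Phi(\alpha))([1])=0$.

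The crux, and the step I expect to be hardest, is injectivity. Given $\Phi(\alpha)=\Phi(\beta)$, I would first use the classification of outer $KK$-trivial $\IZ$-actions on Kirchberg algebras to arrange $\alpha_1=\beta_1$, reducing to two actions sharing a first generator; the equality of invariants then says that the maps $\phi\circ j$ associated to $\alpha$ and to $\beta$ are $KK$-equivalent, hence asymptotically unitarily equivalent by Kirchberg--Phillips uniqueness. From here I would run a two-variable Evans--Kishimoto intertwining: outerness supplies a Rokhlin property for the $\IZ^2$-action, and combining this with an approximate cohomology-vanishing argument I would inductively construct unitaries producing an asymptotic cocycle conjugacy, the coincidence of the $\Phi$-invariants being precisely what allows the contribution of the second generator to close up at each stage. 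Securing enough Rokhlin freeness and cohomological control for the commuting pair to make the intertwining converge is the main technical obstacle.

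For surjectivity I would realise each admissible $x$ by an explicit model: fix an outer $KK$-trivial $\alpha_1$ with a chosen null-homotopy and construct a commuting $\alpha_2$ so that the induced loop of automorphisms represents $x$, the constraint $\gamma_0(x)([1])=0$ guaranteeing compatibility with the unit in the unital case. In the stable case there is no unit to normalise against, so the same construction ranges over all of $KK(A,SA)$, which is the final assertion; combined with the UCT, every homomorphism $K_*(A)\to K_{*-1}(A)$ then arises, as anticipated at the opening of this section.
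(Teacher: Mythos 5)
This statement is not proved in the paper at all: it is Izumi and Matui's classification theorem, imported verbatim from \cite{IzumiMatui2010}, and the author only records the definition of the invariant $\Phi(\alpha)$ before citing their result. So there is no in-paper proof to compare against; what you have written is an attempted sketch of the external theorem itself. At the level of architecture your sketch does resemble the actual Izumi--Matui argument (normalise the first generator using the uniqueness of outer $KK$-trivial $\IZ$-actions, then run an Evans--Kishimoto intertwining for the second generator using Rokhlin-type towers, and build explicit models for surjectivity), but every genuinely hard step is deferred with ``I would check'' or flagged as ``the main technical obstacle'', so this is an outline of a long paper rather than a proof.

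Beyond incompleteness, there is a concrete error in your well-definedness step. You claim that two choices of homotopy $\beta_t$ differ by a loop in $\Aut(A\otimes\CK)$ based at $\id$, and that the resulting change in $\Phi(\alpha)$ ``vanishes in $KK(A,SA)$ since every such loop passes through $KK$-trivial automorphisms.'' That inference is backwards: for a Kirchberg algebra, loops of automorphisms based at the identity (all of which are trivially $KK$-trivial, being homotopic to $\id$) are exactly the objects that produce potentially nonzero classes in $KK(A,SA)$ -- by Dadarlat's computation of the homotopy groups of $\Aut(A\otimes\CK)$ one has $\pi_1(\Aut(A\otimes\CK))\cong KK(A,SA)$. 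So $KK$-triviality of each point of the loop gives you nothing, and the independence of $\Phi(\alpha)$ from the chosen path requires the more careful argument of \cite{IzumiMatui2010}, which analyses how the loop acts on the class $KK(\phi\circ j)-KK(j)$ rather than on the automorphisms themselves. Similarly, your normalisation step ($\gamma_0(\Phi(\alpha))([1])=0$) asserts that the $K_1$-winding of the loop of projections $t\mapsto\beta_t(\alpha_2\otimes\id_\CK)\beta_t^{-1}(1\otimes e)$ is trivial ``because $\alpha_2$ fixes the unit''; but for intermediate $t$ the automorphism $\beta_t$ is not of the form $\gamma\otimes\id_\CK$, so the loop is genuinely nonconstant and its triviality in $K_1(A)$ is a statement that has to be proved, not read off. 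If your goal within this paper is only to use the theorem, the correct move is to cite \cite{IzumiMatui2010}, as the author does; if your goal is to reprove it, the injectivity step (the two-variable intertwining) and the well-definedness of $\Phi$ both need to be carried out in full, and neither is routine.
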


By the definition of $\gamma_*$,
\[
\gamma_*(\Phi(\alpha)) = K_*(\phi\circ j)-K_*(j)
 = (K_*(\phi)-\id)\circ K_*(j)\in \Hom(K_*(A),K_{*-1}(A))
\]
for any locally $KK$-trivial $\IZ^2$-action $\alpha$. Hence, $\gamma_*(\Phi(\alpha))$ is the Snake Lemma homomorphism of diagram \eqref{diagramObstructionTorus} applied to $A\otimes\CK$ and $\alpha\otimes \id_\CK$. Proposition \ref{ObstructionMapCircle} therefore yields that $\gamma_*(\Phi(\alpha))=d_2^{0,*}$. Combining this observation with Theorem \ref{IzumiMatui}, we draw the following consequence.

\begin{cor}
Let $A$ be a unital Kirchberg algebra satisfying the UCT. Let $\eta_*: K_*(A)\to K_{*-1}(A)$ be a homomorphisms with $\eta_0([1])=0$. Then there is a locally $KK$-trivial $\IZ^2$-action $\alpha$ on $A$ such that the associated second page differential $d_2$ satisfies $d_2^{0,*}=\eta_*$. Moreover, $K_*(A\rtimes_\alpha \IZ^2)$ fits into a six-term exact sequence
\[
\xymatrix@C+0.2cm{
K_1(A)\oplus K_0(A) \ar[r]^{\eta_1\oplus 0} & K_0(A)\oplus K_1(A) \ar[r] & K_0(A\rtimes_\alpha\IZ^2)\ar[d]\\
\ar[u] K_1(A\rtimes_\alpha\IZ^2) & K_1(A)\oplus K_0(A) \ar[l] & K_0(A)\oplus K_1(A) \ar[l]^{\eta_0\oplus 0}
	}
\]
If $A$ is a stable Kirchberg algebra satisfying the UCT, then the statement remains true if the condition on the class of the unit is removed.
\end{cor}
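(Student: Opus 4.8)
The first assertion is formal given the material just developed. I would feed the UCT into Izumi--Matui's classification as follows. By the universal coefficient theorem \cite{RosenbergSchochet1987}, applied to the pair $(A,SA)$ of bootstrap-class algebras, the natural map
\[
\gamma_*\colon KK(A,SA)\longrightarrow \Hom\!\big(K_*(A),K_*(SA)\big)=\Hom\!\big(K_*(A),K_{*-1}(A)\big)
\]
is surjective, so some $x\in KK(A,SA)$ satisfies $\gamma_*(x)=\eta_*$. The hypothesis $\eta_0([1])=0$ reads $\gamma_0(x)([1])=0\in K_1(A)$, placing $x$ in the set ii) of Theorem \ref{IzumiMatui}; that theorem then yields a locally $KK$-trivial outer action $\alpha\colon\IZ^2\curvearrowright A$ with $\Phi(\alpha)=x$. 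Since $\gamma_*(\Phi(\alpha))=d_2^{0,*}$ (Proposition \ref{ObstructionMapCircle} applied to $A\otimes\CK$), this gives $d_2^{0,*}=\gamma_*(x)=\eta_*$, as required.

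For the six-term sequence I would compute $K_*(A\rtimes_\alpha\IZ^2)$ by iterating the Pimsner--Voiculescu sequence \cite{PimsnerVoiculescu1980} along $A\rtimes_\alpha\IZ^2=(A\rtimes_{\alpha_1}\IZ)\rtimes_{\check\alpha_2}\IZ$. Write $B:=A\rtimes_{\alpha_1}\IZ$. Local $KK$-triviality gives $K_*(\alpha_1)=\id$, so the first PV sequence collapses to short exact sequences
\[
0\to K_*(A)\xrightarrow{K_*(j)} K_*(B)\xrightarrow{\rho_*} K_{*+1}(A)\to 0 .
\]
These split: as $KK(\alpha_1)=1_A$, Phillips' theorem \cite{Phillips2000} makes $\alpha_1\otimes\id_\CK$ homotopic to $\id$, so the construction of the isomorphism $\psi$ at the beginning of this section, applied to $\alpha_1\otimes\id_\CK$, identifies $\CM_{\alpha_1}(A)\otimes\CK$ with the trivial bundle $(A\otimes\CK)\otimes\CC(\IT)$, whose defining extension is split by the unital inclusion; Paschke's isomorphism \cite{Paschke1983} transports this splitting to the sequence above. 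Thus $K_0(B)\cong K_1(A)\oplus K_0(A)$ and $K_1(B)\cong K_0(A)\oplus K_1(A)$, with $\im(K_*(j))=K_*(A)$ a distinguished summand.

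The decisive step is to compute $K_*(\check\alpha_2)-\id$ on $K_*(B)$ in these coordinates. For this I would reuse the commutative diagram with split-exact rows preceding Theorem \ref{obstructiond0}, whose middle vertical arrow is $K_*(\check\alpha_2)-\id$, whose outer vertical arrows $\op{co}(d_*(\alpha_2))$ and $\op{k}(d_{*+1}(\alpha_2))$ both vanish since $K_*(\alpha_2)=\id$, and whose Snake Lemma homomorphism equals $d_2^{0,*+1}=\eta_{*+1}$ by Proposition \ref{differentialObstructionHomomorphism}. Vanishing of the outer verticals forces $K_*(\check\alpha_2)-\id$ to annihilate the summand $K_*(A)=\im(K_*(j))$ and to factor as
\[
K_*(B)\xrightarrow{\rho_*} K_{*+1}(A)\xrightarrow{\eta_{*+1}} K_*(A)\hookrightarrow K_*(B),
\]
that is, it becomes $\eta_{*+1}\oplus 0$ (with $\eta$ read $2$-periodically) in the splitting above. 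Substituting this into the second PV sequence for $\check\alpha_2$ and reading off the six groups reproduces exactly the asserted hexagon, whose only non-trivial connecting maps are $\eta_1\oplus0$ and $\eta_0\oplus0$.

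I expect the genuine work to sit in the second half, specifically in the bookkeeping that converts $K_*(\check\alpha_2)-\id$ into $\eta_{*+1}\oplus0$: one must ensure the chosen splitting of $K_*(B)$ is the one along $K_*(j)$ (so the subgroup is truly killed), and reconcile the index conventions of $d_2^{0,*}$ with the PV boundary maps. Once the displayed factorisation is in hand, the hexagon is a mechanical transcription of the PV six-term sequence. The stable case requires no new idea: for stable $A$ Theorem \ref{IzumiMatui} classifies by all of $KK(A,SA)$, so no condition on $x$ (hence none on $\eta_0([1])$) is imposed, while the computation of $K_*(A\rtimes_\alpha\IZ^2)$ above never used unitality.
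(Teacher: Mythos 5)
Your proof is correct and takes essentially the same route as the paper: realise $\eta_*$ via the UCT and Theorem \ref{IzumiMatui}, identify $d_2^{0,*}=\gamma_*(\Phi(\alpha))=\eta_*$ via Proposition \ref{ObstructionMapCircle}, and then run the Pimsner--Voiculescu sequence for $(A\rtimes_{\alpha_1}\IZ,\check{\alpha}_2,\IZ)$ using $K_i(A\rtimes_{\alpha_1}\IZ)\cong K_0(A)\oplus K_1(A)$. The only difference is one of detail: you make explicit the factorisation of $K_*(\check{\alpha}_2)-\id$ as $\eta_{*+1}\oplus 0$ through the Snake Lemma description, which the paper leaves implicit in the remark that $d_2^{0,*}=d_*(\alpha)$.
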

\begin{proof}
Since $A$ satisfies the UCT, we find some element $x\in KK(A,SA)$ satisfying $\gamma_*(x)=\eta_*$. Observe that if $A$ is unital, then the condition $\gamma_*(x)([1])=0$ is satisfied by assumption. Theorem \ref{IzumiMatui} yields a locally $KK$-trivial action $\alpha$ with $\Phi(\alpha)=x$, and hence
\[
\eta_* = \gamma_*(x) = \gamma_*(\Phi(\alpha)) = d_2^{0,*}.
\]

As $\alpha_1\otimes \id_\CK$ is homotopic to $\id_{A\otimes \CK}$, we have that $K_i(A\rtimes_{\alpha_1}\IZ)\cong K_0(A)\oplus K_1(A)$ for $i=0,1$. The claim now follows from the Pimsner-Voiculescu sequence for $(A\rtimes_{\alpha_1}\IZ,\check{\alpha}_2,\IZ)$ and the fact that $d_2^{0,*} = d_*(\alpha)$.
\end{proof}


\section{Pointwise inner $\IZ^2$-actions with non-trivial second page differentials}
\noindent
\begin{definition}
Let $A$ be a unital \cstar-algebra and $n\in \IN$. We say that an action $\alpha:\IZ^n\curvearrowright A$ is \emph{pointwise inner} if $\alpha_i$ is an inner automorphisms for $i=1,\ldots,n$. If $n=2$, $\alpha_1=\ad(v)$, and $\alpha_{2}=\ad(w)$ for some unitaries $v,w\in \CU(A)$, then we call $u(\alpha):=v^*w^*vw\in \CU(A)$ the \emph{commutator} associated with $\alpha$.
\end{definition}

It is easy to check that $u(\alpha)\in \CZ(A)$ and that $u(\alpha)$ does not depend on the choice of the implementing unitaries.

Next, we use Theorem \ref{obstructiond0} and \ref{obstructiond1} to give a description of the second page differential associated with a pointwise inner $\IZ^2$-action. 

\begin{cor}
\label{obstructionMapPointwiseInner}
Let $A$ be a unital \cstar-algebra and $\alpha:\IZ^2\curvearrowright A$ a pointwise inner action. Let $n\geq1$, $x\in \CU_n(A)$, and $p\in \CP_n(A)$. Then the associated second page differential $d_2^{0,*}:K_*(A)\to K_{*-1}(A)$ is given by
\[
d_2^{0,0}([p])=\kappa(p,u(\alpha)^{(n)})\quad \text{and}\quad d_2^{0,1}(\alpha)([x])=\kappa(u(\alpha)^{(n)},x).
\]
\end{cor}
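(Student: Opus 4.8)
The plan is to specialise Theorems~\ref{obstructiond0} and~\ref{obstructiond1} to the pointwise inner setting, where both the hypotheses and the output formulas collapse. Write $\alpha_1=\ad(v)$ and $\alpha_2=\ad(w)$ with $v,w\in\CU(A)$, so that $u(\alpha)=v^*w^*vw$. First I would record that inner automorphisms act trivially in $K$-theory, whence $K_*(\alpha_1)=K_*(\alpha_2)=\id$ and $d_*(\alpha_1)=d_*(\alpha_2)=0$. Consequently $E_2^{0,*}=\ker(d_*(\alpha_1))\cap\ker(d_*(\alpha_2))=K_*(A)$, so that every $[p]$ and every $[x]$ automatically lies in the domain of $d_2^{0,*}$, and likewise $E_2^{2,*}=K_*(A)/\langle\im(d_*(\alpha_1)),\im(d_*(\alpha_2))\rangle=K_*(A)$. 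This last identification is exactly what allows the differential to be recorded as an honest element of $K_{*-1}(A)$ rather than a coset, which is why no brackets appear in the statement.

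For the first identity I would apply Theorem~\ref{obstructiond0} to $p\in\CP_n(A)$. The decisive simplification is that $\alpha_1^{(n)}=\ad(v^{(n)})$ and $\alpha_2^{(n)}=\ad(w^{(n)})$ implement the required conjugations exactly on all of $M_n(A)$; there is thus no need to enlarge $p$, and I may take $l=m=0$, $q=p$, with the implementing unitaries required by the theorem being $v^{(n)}$ and $w^{(n)}$ themselves. The theorem then gives $d_2^{0,0}([p])=\kappa(p,W)$ with $W=(w^{(n)})^*\,\alpha_2^{(n)}(v^{(n)})^*\,\alpha_1^{(n)}(w^{(n)})\,v^{(n)}$. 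Substituting $\alpha_2^{(n)}(v^{(n)})=w^{(n)}v^{(n)}(w^{(n)})^*$ and $\alpha_1^{(n)}(w^{(n)})=v^{(n)}w^{(n)}(v^{(n)})^*$ and cancelling the adjacent pairs $(w^{(n)})^*w^{(n)}$ and $(v^{(n)})^*v^{(n)}$ collapses $W$ to $(v^{(n)})^*(w^{(n)})^*v^{(n)}w^{(n)}=u(\alpha)^{(n)}$, which is the asserted formula.

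For the second identity I would argue in the same spirit via Theorem~\ref{obstructiond1} applied to $x\in\CU_n(A)$. Because $\alpha_1^{(n)}$ and $\alpha_2^{(n)}$ are inner, the approximate implementations demanded after invoking Theorem~\ref{DescriptionK1Dadarlat} can be met exactly: I take no scalars (so $l=0$ and the padded unitary of that theorem is simply $x$) and let the two auxiliary unitaries be $v^{(n)}$ and $w^{(n)}$, for which $\alpha_1^{(n)}(x)=v^{(n)}x(v^{(n)})^*$ and $\alpha_2^{(n)}(x)=w^{(n)}x(w^{(n)})^*$ hold on the nose, so both norm estimates take the value $0<\frac{1}{2}$. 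The theorem's formula then reduces to $d_2^{0,1}([x])=\kappa(W',x)$ with $W'=(w^{(n)})^*\,\alpha_2^{(n)}(v^{(n)})^*\,\alpha_1^{(n)}(w^{(n)})\,v^{(n)}$, which is literally the same expression as $W$ above and hence again equals $u(\alpha)^{(n)}$.

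Finally I would confirm that both Bott elements are genuinely defined. Since $u(\alpha)\in\CZ(A)$, its ampliation $u(\alpha)^{(n)}=u(\alpha)\otimes 1_n$ is central in $M_n(A)$ and so commutes with $p$ and with $x$; hence $\kappa(p,u(\alpha)^{(n)})$ is a bona fide $K_1$-Bott element and $\kappa(u(\alpha)^{(n)},x)$ is the $K_0$-Bott element of a genuinely commuting pair of unitaries, with no almost-commutation tolerance needed. I expect the conjugation cancellations to be entirely routine; the only real hazard is notational, namely keeping the implementing unitaries $v,w$ of $\alpha$ apart from the auxiliary unitaries carrying the same letters inside Theorems~\ref{obstructiond0} and~\ref{obstructiond1}. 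Fortunately this overlap is harmless, as the ampliations $v^{(n)},w^{(n)}$ play precisely the role those auxiliary unitaries are designed to fill.
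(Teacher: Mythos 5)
Your proposal is correct and follows essentially the same route as the paper: specialise Theorems \ref{obstructiond0} and \ref{obstructiond1} with $l=m=0$, $q=p$, and implementing unitaries $v^{(n)},w^{(n)}$, then collapse $w^*\alpha_2(v)^*\alpha_1(w)v$ to $u(\alpha)$ by direct cancellation. Your extra remarks — that triviality of $K_*(\alpha_i)$ makes the hypotheses $[p]\in E_2^{0,0}$, $[x]\in E_2^{0,1}$ automatic and identifies $E_2^{2,*}$ with $K_*(A)$, and that centrality of $u(\alpha)$ makes both Bott elements exactly defined — are correct and fill in details the paper leaves implicit.
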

\begin{proof}
One computes that
\[
w^*\alpha_2(v)^*\alpha_1(w)v = w^*wv^*w^*vwv^*v = v^*w^*vw = u(\alpha).
\]
As the automorphisms $\alpha_1=\op{Ad}(v)$ and $\alpha_2=\op{Ad}(w)$ satisfy
\[
\alpha_1^{(n)}(p)=v^{(n)}p{v^{(n)}}^*\quad \text{and}\quad \alpha_2^{(n)}(p)=w^{(n)}p{w^{(n)}}^*,
\]
Theorem \ref{obstructiond0} implies that
\[
d_2^{0,0}([p])=\kappa(p,u(\alpha)^{(n)}).
\]
The proof for $d_2^{0,1}([x])$ follows similarly from Theorem \ref{obstructiond1}.
\end{proof}

Consequently, $d_2$ can only be non-trivial if the unitary $u(\alpha)$ has full spectrum. Otherwise, $u(\alpha)$ is connected to $1$ by unitaries in $\mathrm{C}^*(u(\alpha))\subseteq \CZ(A)$. In fact, we have the following result.

\begin{prop}
\label{isomorphicKtheoryCentralHomotopies}
Let $A$ be a unital \cstar-algebra and $\alpha:\IZ^n\curvearrowright A$ a pointwise inner action. Assume that for $i,j\in\set{1,\ldots,n}$, the commutator associated with the $\IZ^2$-action generated by $\alpha_i$ and $\alpha_j$ is homotopic to $1$ in $\CU(\CZ(A))$. Then $K_*(A\rtimes_{\alpha}\IZ^n)\cong K_*(A\otimes \CC(\IT^n))$.
\end{prop}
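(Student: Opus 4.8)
The plan is to prove the statement directly at the level of $K$-theory, bypassing the higher differentials and extension problems of the spectral sequence, by deforming $A\rtimes_\alpha\IZ^n$ through a continuous field of twisted crossed products. Write $\alpha_i=\ad(v_i)$ with $v_i\in\CU(A)$, and for $i<j$ set $u_{ij}:=v_i^*v_j^*v_iv_j\in\CU(\CZ(A))$, which is exactly the commutator of the $\IZ^2$-subaction generated by $\alpha_i$ and $\alpha_j$. Let $u_1,\ldots,u_n\in\CM(A\rtimes_\alpha\IZ^n)$ denote the canonical commuting implementing unitaries. A short computation shows that $w_i:=v_i^*u_i$ commutes with $A$ and that $w_iw_j=u_{ij}^*w_jw_i$. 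Consequently, $a\mapsto a$ together with $u_i\mapsto v_i\delta_i$ identifies $A\rtimes_\alpha\IZ^n$ with the twisted crossed product $\mathrm{C}^*(\IZ^n,A,\omega)$ of $A$ by the \emph{trivial} $\IZ^n$-action, where $\omega$ is the central $\CU(\CZ(A))$-valued $2$-cocycle determined by $\delta_i\delta_j\delta_i^*\delta_j^*=u_{ij}^*$. For the trivial cocycle one recovers $A\otimes\mathrm{C}^*(\IZ^n)=A\otimes\CC(\IT^n)$, so it suffices to show that the $K$-theory of $\mathrm{C}^*(\IZ^n,A,\omega)$ depends on $\omega$ only through its homotopy class among central cocycles.

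Next I would exploit the hypothesis. By assumption each $u_{ij}$ is connected to $1$ inside $\CU(\CZ(A))$; fix paths $u_{ij}(\cdot)$ with $u_{ij}(0)=u_{ij}$ and $u_{ij}(1)=1$. Since a central $2$-cocycle on $\IZ^n$ is freely prescribed by its values $u_{ij}$ for $i<j$, these paths assemble into a central $\CU(\CZ(A\otimes\CC([0,1])))$-valued cocycle $\Omega$ whose fibre at $t$ is the cocycle $\omega_t$ given by the $u_{ij}(t)$. Form the twisted crossed product $\CE:=\mathrm{C}^*(\IZ^n,A\otimes\CC([0,1]),\Omega)$. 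Then $\CC([0,1])$ sits centrally in $\CM(\CE)$, and evaluation at $t$ is a $\IZ^n$-equivariant, cocycle-preserving surjection of coefficient algebras, hence induces a surjection $\mathrm{ev}_t:\CE\onto\mathrm{C}^*(\IZ^n,A,\omega_t)$. By the first paragraph, the target of $\mathrm{ev}_0$ is $A\rtimes_\alpha\IZ^n$ and the target of $\mathrm{ev}_1$ is $A\otimes\CC(\IT^n)$.

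Finally I would show that $\mathrm{ev}_0$ and $\mathrm{ev}_1$ are isomorphisms in $K$-theory. The kernel of $\mathrm{ev}_0$ is a twisted crossed product with coefficient algebra $A\otimes\CC_0((0,1])$, and the kernel of $\mathrm{ev}_1$ one with coefficient algebra $A\otimes\CC_0([0,1))$; both coefficient algebras are cones over $A$ and hence have vanishing $K$-theory. It therefore remains to note that a twisted crossed product of a $K$-theoretically trivial algebra by $\IZ^n$ is again $K$-theoretically trivial: decomposing the twisted $\IZ^n$-crossed product as an $n$-fold iterated ordinary crossed product by $\IZ$ in the sense of Packer and Raeburn (the cocycle in each single $\IZ$-coordinate is a coboundary and is absorbed into an automorphism), one applies the Pimsner-Voiculescu six-term sequence \cite{PimsnerVoiculescu1980} $n$ times, so that vanishing of $K_*$ of the coefficient algebra forces vanishing of $K_*$ of the crossed product. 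Hence $K_*(\mathrm{ev}_0)$ and $K_*(\mathrm{ev}_1)$ are isomorphisms, and composing inverses yields $K_*(A\rtimes_\alpha\IZ^n)\cong K_*(\CE)\cong K_*(A\otimes\CC(\IT^n))$.

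The main obstacle is the twisted-crossed-product bookkeeping: one must verify carefully that $A\rtimes_\alpha\IZ^n$ really is the central twisted crossed product above -- this is the only point where the structure of $\alpha$ enters, and it reduces to the identity $w_iw_j=u_{ij}^*w_jw_i$ -- and that the iterated Pimsner-Voiculescu argument genuinely applies to twisted crossed products, so that the $K$-triviality of the cone coefficient algebras propagates to the kernels of the evaluation maps. As a consistency check, I note that Corollary \ref{completeDescriptiond2} combined with Corollary \ref{obstructionMapPointwiseInner} and the homotopy invariance of Bott elements shows that the associated second page differential $d_2$ vanishes under the present hypotheses; however, $d_2=0$ by itself controls neither the higher differentials nor the extension problems, which is precisely why the deformation argument above is needed.
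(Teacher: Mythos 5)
Your proposal is correct, but it takes a genuinely different route from the paper. The paper argues by induction on $n$: it writes $A\rtimes_\alpha\IZ^n\cong (A\rtimes_{\alpha_1}\IZ)\rtimes_{\check{\alpha}}\IZ^{n-1}$ and uses the assumed central homotopies $v(i,1)\simeq 1$ to build a path of genuine $\IZ^{n-1}$-actions on $A\rtimes_{\alpha_1}\IZ$ connecting the dual action $\check{\alpha}$ to the pointwise inner action $\alpha'_i=\ad(v_i)$; it then invokes homotopy invariance of $K_*$ of crossed products as a known fact and applies the induction hypothesis to $\alpha'$ (whose pairwise commutators are the same central unitaries $v(i,j)$, still homotopic to $1$ in $\CU(\CZ(A\rtimes_{\alpha_1}\IZ))$). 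You instead make a single global move: the identity $w_iw_j=u_{ij}^*w_jw_i$ for $w_i=v_i^*u_i$ (which is correct -- note $w_iw_j=v_j^*v_i^*u_iu_j$ and $w_jw_i=v_i^*v_j^*u_iu_j$) recasts $A\rtimes_\alpha\IZ^n$ as a centrally twisted crossed product over the \emph{trivial} action, and the hypothesis becomes a homotopy of the twisting cocycle, which you then kill by the cone-plus-iterated-Pimsner--Voiculescu argument. What your approach buys is that the homotopy-invariance mechanism, which the paper leaves implicit, is proved from scratch and realised by a zig-zag of honest $*$-homomorphisms, and it isolates cleanly that the only $K$-theoretic obstruction is the class of the commutator cocycle $(u_{ij})$ in $H^2(\IZ^n,\CU(\CZ(A)))$ up to homotopy; the price is the twisted-crossed-product bookkeeping (Packer--Raeburn decomposition, untwisting along each $\IZ$-coordinate, and exactness of the twisted crossed product functor for the amenable group $\IZ^n$), all of which is standard but which the paper's induction avoids entirely. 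Your closing consistency check that $d_2=0$ under these hypotheses, while not sufficient on its own, is also accurate.
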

\begin{proof}
The proof goes by induction over $n\in \IN$. For a single automorphism, this is trivial since $A\rtimes_{\ad(v)}\IZ\cong A\otimes \CC(\IT)$. Assume now that the statement is true for $n-1$. Denote by $\check{\alpha}$ the $\IZ^{n-1}$-action on $A\rtimes_{\alpha_{1}}\IZ$ induced by $\check{\alpha}_2,\ldots,\check{\alpha}_n$. For $i,j\in \set{1,\ldots,n}$, let $v(i,j)$ denote the commutator associated with the $\IZ^2$-action generated by $\alpha_i$ and $\alpha_j$. As inner automorphisms fix the center pointwise, it holds that $v(i,j)\in \CZ(A)\subseteq \CZ(A\rtimes_{\alpha_1}\IZ)$. Hence, $\alpha_2,\ldots,\alpha_n$ give rise to a pointwise inner action $\alpha^\prime:\IZ^{n-1}\curvearrowright A\rtimes_{\alpha_1}\IZ$. Observe that $\alpha^\prime$ is as in the statement, so that we can apply the induction hypothesis to it. For $i=1,\ldots,n$, we therefore find homotopies $w_{t,i}\in \mathcal U(\mathcal Z(A))$, $t\in [0,1]$, with $w_{0,i}=v(i,1)$ and $w_{1,i}=1$. Since these homotopies lie in the centre of $A\rtimes_{\alpha_1}\IZ$, we can define automorphisms
\[
\phi_{t,i}:A\rtimes_{\alpha_1}\IZ\stackrel{\cong}{\longrightarrow} A\rtimes_{\alpha_1}\IZ,\quad \phi_{t,i}(a)=\alpha_i(a),\ \phi_{t,i}(u)=w_{t,i}u.
\]
For all $s,t\in [0,1]$ and $i,j=2,\ldots,n$, one computes that
\[
\phi_{s,i}\circ \phi_{t,j}(u)=w_{t,j}w_{s,i}u=w_{s,i}w_{t,j}u=\phi_{t,j}\circ \phi_{s,i}(u).
\]
Thus, the $\phi_{t,i}$ define a homotopy between the $\IZ^n$-actions $\check{\alpha}$ and $\alpha^{\prime}$. In particular, the corresponding crossed products have isomorphic $K$-theory. By the induction hypothesis, we therefore obtain that
\[
\begin{array}{lclcl}
K_*(A\rtimes_\alpha\IZ^n) & \cong & K_*(A\rtimes_{\alpha_1}\IZ\rtimes_{\check{\alpha}}\IZ^{n-1})
 & \cong & K_*(A\rtimes_{\alpha_1}\IZ \rtimes_{\alpha^\prime}\IZ^{n-1})\\
 & \cong & K_*((A\rtimes_{\alpha_1}\IZ)\otimes \CC(\IT^{n-1}))
 & \cong &  K_*(A\otimes \CC(\IT^n)).
\end{array}
\]
\end{proof}

The next result shows that there are certain restrictions on second page differentials associated with pointwise inner $\IZ^2$-actions.

\begin{prop}
\label{limitationsObstructionInner}
Let $A$ be a unital \cstar-algebra and $\alpha:\IZ^2\curvearrowright A$ a pointwise inner action. Then the associated second page differential $d_2^{0,*}$ satisfies $d_2^{0,*-1}\circ d_2^{0,*} = 0$.
\end{prop}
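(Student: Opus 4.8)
The plan is to leverage Corollary \ref{obstructionMapPointwiseInner}, which identifies the two components of the differential with Bott elements built from the \emph{central} commutator $u:=u(\alpha)$: explicitly $d_2^{0,0}([p])=\kappa(p,u^{(n)})$ and $d_2^{0,1}([x])=\kappa(u^{(n)},x)$. Since inner automorphisms act trivially on $K$-theory, the $E_2$-formulas specialise to $E_2^{0,q}=E_2^{2,q}=K_q(A)$, so both components are genuine homomorphisms $K_*(A)\to K_{*-1}(A)$ and the composition $d_2^{0,*-1}\circ d_2^{0,*}$ makes sense (with $K_{*-2}\cong K_*$ by Bott periodicity). The key idea is to exhibit \emph{both} components as one and the same operation, namely external multiplication by the class $[z]\in K_1(\CC(\IT))$ followed by ``evaluation at $u$'', and then to recognise the composite as a product which factors through the diagonal and must therefore vanish.

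Concretely, I would introduce the unital $*$-homomorphism $\Theta\colon\CC(\IT)\otimes A\to A$, $f\otimes a\mapsto f(u)a$, which is well defined because $u$ is central (so $f(u)\in\CZ(A)$ commutes with $a$) and because $\sigma(u)\subseteq\IT$ for any unitary. Writing $[z]\times(-)$ for external multiplication by $[z]$, one has $[z]\times[p]=\kappa(1\otimes p,\,z\otimes 1)$ in the $K_1$-Bott notation and $[z]\times[x]=\kappa(z\otimes 1,\,1\otimes x)$ in Exel's sense. Applying $\Theta^{(n)}_*$ and invoking naturality of the two flavours of Bott element under unital $*$-homomorphisms (as recorded after the definitions and in Proposition \ref{propertiesBott0}), I get $\Theta_*(\kappa(1\otimes p,z\otimes 1))=\kappa(p,u^{(n)})$ and $\Theta_*(\kappa(z\otimes 1,1\otimes x))=\kappa(u^{(n)},x)$. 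Thus both components coincide with the single map $\delta:=\Theta_*\circ([z]\times-)$, read off in the two relevant parities.

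The composition then reads $\Theta_*\big([z]\times\Theta_*([z]\times-)\big)$. By naturality of the external product in the second variable this equals $\big(\Theta\circ(\id_{\CC(\IT)}\otimes\Theta)\big)_*\big([z]\times[z]\times-\big)$, and a direct check gives $\Theta\circ(\id\otimes\Theta)=\Theta\circ(\mu\otimes\id_A)$, where $\mu\colon\CC(\IT)\otimes\CC(\IT)\to\CC(\IT)$ is the multiplication map (induced by the diagonal $\IT\to\IT^2$). Hence the composite equals $\Theta_*\big(\mu_*([z]\times[z])\times-\big)$. Since $[z]\times[z]=\kappa(z\otimes 1,1\otimes z)=\Fb$, naturality of the Bott element under $\mu$ yields $\mu_*([z]\times[z])=\kappa(z,z)\in K_0(\CC(\IT))$, and part {\it iv)} of Proposition \ref{propertiesBott0} gives $\kappa(z,z)=-\kappa(z,z)$, so that $\kappa(z,z)$ is $2$-torsion in the torsion-free group $K_0(\CC(\IT))\cong\IZ$ and therefore vanishes. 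Consequently $\delta\circ\delta=0$ in both parities, which is exactly the assertion $d_2^{0,*-1}\circ d_2^{0,*}=0$.

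The hard part will be the bookkeeping in the middle step: confirming that the two a priori different formulas for $d_2^{0,0}$ and $d_2^{0,1}$ are both instances of $\Theta_*([z]\times-)$, and then correctly pushing associativity and naturality of the external product through so that the double application collapses onto the diagonal multiplication $\mu$. Once the composition is written via $\mu_*([z]\times[z])$, the vanishing is immediate. As a consistency check one can instead treat $d_2^{0,1}\circ d_2^{0,0}$ by hand: the element $\kappa\big(u^{(n)},\,pu^{(n)}p+1-p\big)$ splits along $p$ into $\kappa(pu^{(n)}p,pu^{(n)}p)+\kappa\big((1-p)u^{(n)}(1-p),\,1-p\big)$, and each summand vanishes because $\kappa(w,w)=0$ and $\kappa(w,1)=0$; the reverse composite, however, is far more convenient to handle through the external-product argument above, which is why I would phrase the whole proof uniformly via $\Theta$.
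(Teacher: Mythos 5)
Your argument is correct, but it is organised quite differently from the paper's proof. The paper treats the two compositions asymmetrically: for $d_2^{0,-1}\circ d_2^{0,0}$ it writes $\kappa\big(u(\alpha)^{(n)},pu(\alpha)^{(n)}p+1_n-p\big)$ as the image of $\kappa(z\oplus z,z\oplus 1)$ under the unital $*$-homomorphism $\CC(\IT)\oplus\CC(\IT)\to M_n(A)$ sending $z\oplus z\mapsto u(\alpha)^{(n)}$ and $1\oplus 0\mapsto p$ --- this is precisely the splitting along $p$ and $1-p$ that you relegate to a ``consistency check'' --- and concludes from $\kappa(z,z)=\kappa(z,1)=0$; the other composition $d_2^{0,0}\circ d_2^{0,1}$ is then obtained indirectly, by applying the first case to $(A\otimes\CC(\IT),\alpha\otimes\id,\IZ^2)$ and invoking Bott periodicity. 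Your main route instead packages both components of $d_2^{0,*}$ as the single operation $\delta=\Theta_*\circ([z]\times-)$ with $\Theta(f\otimes a)=f(u(\alpha))a$, so that $\delta\circ\delta$ collapses, via associativity and naturality of the external product, onto multiplication by $\mu_*([z]\times[z])=\kappa(z,z)=0$. This buys a uniform treatment of both parities with no suspension step, and a conceptual reading: for pointwise inner actions $d_2^{0,*}$ is exterior product with $[z]$ followed by evaluation along the central unitary $u(\alpha)$, and its square dies because the restriction of $[z]\times[z]$ to the diagonal of $\IT^2$ vanishes. The price is exactly the bookkeeping you flag: you must verify the compatibility of the two flavours of Bott element with the external product, namely $[z]\times[p]=\kappa(1\otimes p,z\otimes 1_n)$ and $[z]\times[x]=\pm\kappa(z\otimes 1_n,1\otimes x)$; the first is the standard formula for the $K_1\times K_0$ product, while the second holds only up to a universal sign depending on the product convention --- harmless for a vanishing statement, but worth saying explicitly. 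Both identities follow from naturality together with the universal cases ($A=\CC(\IT)$, where the second reduces to $[z]\times[z]=\pm\Fb=\pm\kappa(z_1,z_2)$), and the remaining steps of your outline check out.
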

\begin{proof}
Using the canonical isomorphism 
\[
K_*(\CC(\IT)\oplus \CC(\IT))\cong K_*(\CC(\IT))\oplus K_*(\CC(\IT)),
\]
we compute
\[
\kappa(z\oplus z, z\oplus 1)  =  \kappa(z,z)\oplus \kappa(z,1)  =  0 \hspace{0.1cm}\in K_0(\CC(\IT))\oplus K_0(\CC(\IT)).
\]
Given a projection $p\in \CP_n(A)$, there exists a unital $*$-homomorphism $\varphi:\CC(\IT)\oplus \CC(\IT)\to M_n(A)$ satisfying $\varphi(z\oplus z)=u(\alpha)^{(n)}$ and $\varphi(1\oplus 0)=p$. By Corollary \ref{obstructionMapPointwiseInner}, we get that
\[\def\arraystretch{1.3}
\begin{array}{lcl}
d_2^{0,-1}(\kappa(p,u(\alpha)^{(n)})) & = & \kappa(u(\alpha)^{(n)},pu(\alpha)^{(n)}p+1_n-p)\\
 & = & K_0(\varphi)(\kappa(z\oplus z,z\oplus 1))\\
 & = & 0.
\end{array}
\] 
This shows that $d_2^{0,-1}\circ d_2^{0,0}=0$. Applying this to $(A\otimes \CC(\IT),\alpha \otimes \id,\IZ^2)$, we conclude that the second page differential corresponding to $(SA,S\alpha,\IZ^2)$ also has this property. Bott periodicity therefore shows that $d_2^{0,0}\circ d_2^{0,1}=0$.
\end{proof}

\subsection{A natural action on the group \cstar-algebra of the discrete Heisenberg group $H_3$}
\noindent
Recall the discrete Heisenberg group 
\[
H_3:=\left\langle r,s\ :\ rsr^{-1}s^{-1}\ \text{is central}\right\rangle
\]
and its associated (full) group \cstar-algebra 
\[
\mathrm{C}^*(H_3) := \mathrm{C}^*(u,v\ \text{unitaries}\ :\ uvu^*v^*\ \text{is central}).
\]
Consider the pointwise inner $\IZ^2$-action $\alpha$ on $\mathrm{C}^*(H_3)$ given by $\alpha_1=\op{Ad}(u)$ and $\alpha_2=\ad(v)$. The associated \cstar-dynamical system $(\mathrm{C}^*(H_3),\alpha,\IZ^2)$ is universal in the following sense. Let $B$ be a unital \cstar-algebra and $\beta:\IZ^2\curvearrowright B$ a pointwise inner action. If $\beta_1=\ad(x)$ and $\beta_2=\ad(y)$, then there is a unital and equivariant $*$-homomorphism $\varphi:(\mathrm{C}^*(H_3),\alpha,\IZ^2)\to (B,\beta,\IZ^2)$ satisfying $\varphi(u)=x$ and $\varphi(v)=y$.

The Heisenberg group also admits the following description as a semidirect product
\[
H_3=\IZ^2\rtimes_{\tilde{\sigma}}\IZ,\quad \text{with}\ \tilde{\sigma}(e_1)=e_1,\ \tilde{\sigma}(e_2)=e_1+e_2.
\]
Hence, the $*$-automorphism $\sigma\in \Aut(\CC(\IT^{2}))$ satisfying $\sigma(z_1)=z_1$ and $\sigma(z_2)=z_1z_2$ gives rise to an isomorphism
\[
\mathrm{C}^*(H_3)\stackrel{\cong}{\longrightarrow} \CC(\IT^2)\rtimes_\sigma\IZ,\quad u\mapsto u,\ v\mapsto z_2,
\]
where $u\in \CC(\IT^2)\rtimes_\sigma\IZ$ denotes the canonical unitary implementing $\sigma$. Using this identification, the action $\alpha$ is given by $\alpha_1=\ad(u)$ and $\alpha_2=\ad(z_2)$, and the commutator associated with $\alpha$ satisfies
\[
u(\alpha) = u^*z_2^*uz_2 = z_1z^*_2z_2 = z_1 \in \CC(\IT^2)\rtimes_\sigma \IZ.
\]

The Bott projection $e:=e(z_1,z_2)\in M_2(\CC(\IT^{2}))$ is unitarily equivalent to $\sigma^{(2)}(e)$, see \cite[Section 1]{AndersonPaschke1989}. So, we find a unitary $x\in \CU_2(\CC(\IT^2))$ with $\sigma^{(2)}(e)=xex^*$, which in turn gives rise to a Bott element $\kappa(e,x^*u^{(2)})\in K_1(\mathrm{C}^*(H_3))$. 

Let us recall the $K$-theory of $\mathrm{C}^*(H_3)$, which was determined in \cite[Proposition 1.4(a)]{AndersonPaschke1989}. For the reader's convenience, we shall also provide a short proof.

\begin{prop}
\label{KtheoryHeisenberg}
The $K$-theory of $\mathrm{C}^*(H_3)$ is given by
\[
\begin{array}{c}
K_0(\mathrm{C}^*(H_3))=\IZ^3\left[\kappa(z_1,z_2), \kappa(z_1,u), [1]\right],\\
K_1(\mathrm{C}^*(H_3))=\IZ^3\left[ [z_2], [u], \kappa(e,x^*u^{(2)})\right].
\end{array}
\]
\end{prop}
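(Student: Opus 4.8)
The plan is to use the identification $\mathrm{C}^*(H_3)\cong\CC(\IT^2)\rtimes_\sigma\IZ$ already established above and run the Pimsner--Voiculescu sequence \cite{PimsnerVoiculescu1980} for $\sigma$, which reads
\[
K_0(\CC(\IT^2))\xrightarrow{K_0(\sigma)-\id}K_0(\CC(\IT^2))\xrightarrow{K_0(j)}K_0(\mathrm{C}^*(H_3))\xrightarrow{\rho_0}K_1(\CC(\IT^2))\xrightarrow{K_1(\sigma)-\id}\cdots.
\]
I would start from $K_0(\CC(\IT^2))=\IZ[1]\oplus\IZ\kappa(z_1,z_2)$ and $K_1(\CC(\IT^2))=\IZ[z_1]\oplus\IZ[z_2]$, so the whole computation reduces to understanding $K_*(\sigma)$ and then identifying explicit lifts of the boundary maps.

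First I would determine $K_*(\sigma)$. On $K_1$ one has $K_1(\sigma)[z_1]=[z_1]$ and $K_1(\sigma)[z_2]=[z_1z_2]=[z_1]+[z_2]$, so $K_1(\sigma)-\id$ is the nilpotent matrix $\left(\begin{smallmatrix}0&1\\0&0\end{smallmatrix}\right)$ in the basis $([z_1],[z_2])$; hence $\kernel(K_1(\sigma)-\id)=\IZ[z_1]$ and $\coker(K_1(\sigma)-\id)=\IZ[z_2]$. On $K_0$, naturality of the Bott element gives $K_0(\sigma)(\kappa(z_1,z_2))=\kappa(z_1,z_1z_2)$, and part \textit{iii)} of Proposition \ref{propertiesBott0} together with $\kappa(z_1,z_1)=0$ (which follows from part \textit{iv)} and torsion-freeness of $K_0(\CC(\IT^2))$) yields $\kappa(z_1,z_1z_2)=\kappa(z_1,z_2)$. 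Since also $K_0(\sigma)[1]=[1]$, I conclude $K_0(\sigma)=\id$, so $K_0(\sigma)-\id=0$.

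Feeding this into the Pimsner--Voiculescu sequence produces the two short exact sequences
\[
0\to\coker(K_0(\sigma)-\id)\to K_0(\mathrm{C}^*(H_3))\to\kernel(K_1(\sigma)-\id)\to 0,
\]
\[
0\to\coker(K_1(\sigma)-\id)\to K_1(\mathrm{C}^*(H_3))\to\kernel(K_0(\sigma)-\id)\to 0,
\]
whose right-hand quotients $\IZ[z_1]$ and $\IZ^2=K_0(\CC(\IT^2))$ are free; hence both sequences split and both groups are $\IZ^3$. The generators from the left-hand terms are $[1],\kappa(z_1,z_2)$ and $[z_2]$, pulled into the crossed product by $K_*(j)$. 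For the remaining generators I would invoke the explicit lifts of Section \ref{Section Bott}. Since $\sigma(z_1)=z_1$, the unitary $z_1$ commutes with the implementing unitary $u$, and Proposition \ref{index0} (with $w=1$, $y=z_1$) gives $\rho_0(\kappa(u,z_1))=[z_1]$, whence $\rho_0(\kappa(z_1,u))=-[z_1]$ by part \textit{iv)} of Proposition \ref{propertiesBott0}; thus $\kappa(z_1,u)$ lifts a generator of $\kernel(K_1(\sigma)-\id)$, giving $K_0(\mathrm{C}^*(H_3))=\IZ^3[\kappa(z_1,z_2),\kappa(z_1,u),[1]]$. For $K_1$, Proposition \ref{index1} with $p=1$, $w=1$ gives $\rho_1([u])=\rho_1(\kappa(1,u))=[1]$, while for the Bott projection $e=e(z_1,z_2)$ one uses the unitary $x\in\CU_2(\CC(\IT^2))$ with $\sigma^{(2)}(e)=xex^*$ from \cite[Section 1]{AndersonPaschke1989} and Proposition \ref{index1} with $q=e$, $w=x$ to get $\rho_1(\kappa(e,x^*u^{(2)}))=[e]=[1]+\kappa(z_1,z_2)$. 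As $\{[1],[1]+\kappa(z_1,z_2)\}$ is a $\IZ$-basis of $\kernel(K_0(\sigma)-\id)=K_0(\CC(\IT^2))$, the elements $[u]$ and $\kappa(e,x^*u^{(2)})$ lift a basis and, with $[z_2]$, yield $K_1(\mathrm{C}^*(H_3))=\IZ^3[[z_2],[u],\kappa(e,x^*u^{(2)})]$.

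The computations of $\sigma_*$ and the splitting of the two sequences are routine. The step requiring the most care is the verification of the explicit lifts: one must check that $e$ commutes with $x^*u^{(2)}$ (which follows from $\sigma^{(2)}(e)=xex^*$ upon conjugating by $u$) so that $\kappa(e,x^*u^{(2)})\in K_1$ is well-defined, and that its boundary is $[e]=[1]+\kappa(z_1,z_2)$ rather than $\kappa(z_1,z_2)$ alone; this shift by $[1]$ is exactly what makes the two $K_1$-lifts span a basis of $K_0(\CC(\IT^2))$, and getting the sign and the class of the unit right is the main subtlety.
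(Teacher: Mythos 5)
Your proof is correct and follows essentially the same route as the paper: identify $\mathrm{C}^*(H_3)$ with $\CC(\IT^2)\rtimes_\sigma\IZ$, run the Pimsner--Voiculescu sequence using $K_0(\sigma)=\id$ and $\kernel(K_1(\sigma)-\id)=\IZ[z_1]$, split the two resulting extensions by freeness of the quotients, and exhibit the lifts via Propositions \ref{index0} and \ref{index1}. Your added checks (the explicit verification of $K_0(\sigma)=\id$, the commutation of $e$ with $x^*u^{(2)}$, the observation that $\{[1],[e]\}=\{[1],[1]+\kappa(z_1,z_2)\}$ is a basis, and the sign $\rho_0(\kappa(z_1,u))=-[z_1]$, which still generates $\IZ[z_1]$) are all consistent with, and slightly more careful than, the paper's own argument.
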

\begin{proof}
As $K_0(\sigma)=\id$, the Pimsner-Voiculescu sequence for $\sigma$ is of the form
\[
\xymatrix@C+0.25cm{
K_0(\CC(\IT^{2})) \ar[r]^0 & K_0(\CC(\IT^{2})) \ar[r] & K_0(\CC(\IT^2)\rtimes_\sigma\IZ) \ar[d]^{\rho_0}\\
K_1(\CC(\IT^2)\rtimes_\sigma\IZ) \ar[u]^{\rho_1} & K_1(\CC(\IT^2)) \ar[l] & K_1(\CC(\IT^2)) \ar[l]^{K_1(\sigma)-\id}
}
\]
Moreover, $\kernel(K_{1}(\sigma)-\id)=\IZ[z_{1}]$. This yields 
\[
K_0(\mathrm{C}^*(H_3))\cong K_1(\mathrm{C}^*(H_3)) \cong \IZ^3.
\]
We also conclude that the natural inclusions $K_0(\CC(\IT^2))\into K_0(\CC(\IT^2)\rtimes_\sigma\IZ)$  and $\IZ[z_2]\into K_1(\CC(\IT^2)\rtimes_\sigma\IZ)$ are split-injective. Proposition \ref{index1} implies that $\rho_1([u])=[1]$ and $\rho_1(\kappa(e,x^*u^{(2)}))=[e]$. This shows the assertion for $K_1(\mathrm{C}^*(H_3))$. Analogously, the claim for $K_0(\mathrm{C}^*(H_3))$ follows since $\rho_0(\kappa(z_1,u))=[z_1]$ by Proposition \ref{index0}.
\end{proof}

\begin{theorem}
\label{HeisenbergAction}
The second page differential $d_2$ associated with the natural action $\alpha:\IZ^2\curvearrowright \mathrm{C}^*(H_3)$ is non-trivial. The $K$-theory of the crossed product $\mathrm{C}^*(H_3)\rtimes_\alpha\IZ^2$ satisfies
\[
K_0(\mathrm{C}^*(H_3)\rtimes_\alpha\IZ^2)\cong K_1(\mathrm{C}^*(H_3)\rtimes_\alpha\IZ^2)\cong\IZ^{10}.
\]
In particular, $\mathrm{C}^*(H_3)\rtimes_\alpha\IZ^2$ and $\mathrm{C}^*(H_3)\otimes \CC(\IT^2)$ are not isomorphic in $K$-theory.
\end{theorem}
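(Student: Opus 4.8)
The plan is to combine the explicit formula for the second page differential of a pointwise inner action (Corollary~\ref{obstructionMapPointwiseInner}) with the $K$-theory of $\mathrm{C}^*(H_3)$ from Proposition~\ref{KtheoryHeisenberg}, and then to read off $K_*(\mathrm{C}^*(H_3)\rtimes_\alpha\IZ^2)$ from the spectral sequence, whose $E_\infty$-page agrees with its $E_3$-page. Since $\alpha_1$ and $\alpha_2$ are inner they act trivially on $K$-theory, so the $E_2$-page is concentrated in columns $p=0,1,2$ with $E_2^{0,q}=K_q(\mathrm{C}^*(H_3))$, $E_2^{1,q}=K_q(\mathrm{C}^*(H_3))^{\oplus2}$ and $E_2^{2,q}=K_q(\mathrm{C}^*(H_3))$, of ranks $3,6,3$. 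As $u(\alpha)=z_1$, Corollary~\ref{obstructionMapPointwiseInner} gives $d_2^{0,0}([p])=\kappa(p,z_1^{(n)})$ and $d_2^{0,1}([x])=\kappa(z_1^{(n)},x)$, and it suffices to evaluate these on the generators listed in Proposition~\ref{KtheoryHeisenberg}.

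First I would show $d_2^{0,0}=0$. The key input is that $[z_1]=0$ in $K_1(\mathrm{C}^*(H_3))$: this was already observed in the proof of Proposition~\ref{KtheoryHeisenberg}, since $[z_1]$ lies in $\im(K_1(\sigma)-\id)$ and hence in the kernel of $K_1(\CC(\IT^2))\to K_1(\mathrm{C}^*(H_3))$. Consequently $d_2^{0,0}([1])=\kappa(1,z_1)=[z_1]=0$. For the generator $\kappa(z_1,z_2)=[e]-[1]$ with $e=e(z_1,z_2)$ the rank-one Bott projection, centrality of $z_1$ yields $\kappa(e,z_1^{(2)})=[z_1e+1-e]$; as $K_1(\CC(\IT^2))\cong\IZ^2$ is detected by the winding number of the determinant, this class equals $[z_1]$ in $K_1(\CC(\IT^2))$, and pushes forward to $[z_1]=0$ in $K_1(\mathrm{C}^*(H_3))$ by naturality of the Bott element. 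The same computation applies verbatim to $\kappa(z_1,u)$, since $z_1$ and $u$ commute and generate a commutative subalgebra. Thus $d_2^{0,0}$ vanishes on all three generators, so $d_2^{0,0}=0$.

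Next I would pin down the rank of $d_2^{0,1}\colon K_1(\mathrm{C}^*(H_3))\to K_0(\mathrm{C}^*(H_3))$. From the formula, $d_2^{0,1}([z_2])=\kappa(z_1,z_2)$ and $d_2^{0,1}([u])=\kappa(z_1,u)$, which are two of the three basis elements of $K_0(\mathrm{C}^*(H_3))$; in particular $d_2$ is non-trivial, which settles the first assertion. For the upper bound I would use the canonical tracial state $\tau$ on $\mathrm{C}^*(H_3)$: because $z_1$ is central, every element $\kappa(z_1^{(n)},x)$ in the image of $d_2^{0,1}$ is the Bott element of an exactly commuting pair, hence vanishes under $\tau_*$. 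Therefore $\im(d_2^{0,1})\subseteq\ker(\tau_*)$, and since $\tau_*(\kappa(z_1,z_2))=\tau_*(\kappa(z_1,u))=0$ while $\tau_*([1])=1$, one has $\ker(\tau_*)=\langle\kappa(z_1,z_2),\kappa(z_1,u)\rangle\cong\IZ^2$. Hence $\im(d_2^{0,1})=\langle\kappa(z_1,z_2),\kappa(z_1,u)\rangle$ is a rank-two direct summand, $d_2^{0,1}$ has rank $2$, and $\coker(d_2^{0,1})\cong\IZ$.

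Finally I would assemble the $E_\infty=E_3$ page. With $d_2^{0,0}=0$ and $d_2^{0,1}$ of rank $2$, the surviving terms are $E_3^{0,0}=K_0(\mathrm{C}^*(H_3))\cong\IZ^3$, $E_3^{2,1}=K_1(\mathrm{C}^*(H_3))\cong\IZ^3$, $E_3^{1,q}\cong\IZ^6$, $E_3^{0,1}=\ker(d_2^{0,1})\cong\IZ$ and $E_3^{2,0}=\coker(d_2^{0,1})\cong\IZ$. Summing the contributions with $p+q$ even shows that $K_0(\mathrm{C}^*(H_3)\rtimes_\alpha\IZ^2)$ has rank $3+6+1=10$, and those with $p+q$ odd give the same value $1+6+3=10$ for $K_1$; as all $E_\infty$-terms are free abelian, the relevant extensions split and $K_0(\mathrm{C}^*(H_3)\rtimes_\alpha\IZ^2)\cong K_1(\mathrm{C}^*(H_3)\rtimes_\alpha\IZ^2)\cong\IZ^{10}$. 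Since tensoring with $\CC(\IT)$ doubles $K$-theory, $\mathrm{C}^*(H_3)\otimes\CC(\IT^2)$ has $K_0\cong K_1\cong\IZ^{12}$, so the two algebras cannot be isomorphic in $K$-theory. The main obstacle is the vanishing $d_2^{0,0}=0$: it hinges both on the (at first glance surprising) fact that $[z_1]=0$ in $K_1(\mathrm{C}^*(H_3))$ and on the winding-number computation identifying $\kappa(e(z_1,\cdot),z_1)$ with $[z_1]$; the exactness of the rank count then depends on sharply bounding $\im(d_2^{0,1})$ by the trace.
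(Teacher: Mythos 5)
Your proposal is correct and follows essentially the same route as the paper: the formula of Corollary \ref{obstructionMapPointwiseInner} applied to the generators from Proposition \ref{KtheoryHeisenberg}, the trace argument identifying $\im(d_2^{0,1})$ as the rank-two direct summand spanned by $\kappa(z_1,z_2)$ and $\kappa(z_1,u)$, and a rank count. The only (equivalent) deviations are that you verify $d_2^{0,0}=0$ on the two Bott generators by a direct determinant computation where the paper instead invokes $d_2^{0,0}\circ d_2^{0,1}=0$ from Proposition \ref{limitationsObstructionInner}, and that you assemble the final answer from the $E_3=E_\infty$ page rather than from the Pimsner--Voiculescu sequence for $(\mathrm{C}^*(H_3)\rtimes_{\alpha_1}\IZ,\check{\alpha}_2,\IZ)$.
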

\begin{proof}
By applying Corollary \ref{obstructionMapPointwiseInner} to $d_2^{0,1}:K_1(\mathrm{C}^*(H_3))\to K_0(\mathrm{C}^*(H_3))$, we obtain
\[
d_2^{0,1}([z_2]) =  \kappa(u(\alpha),z_2)  =  \kappa(z_1,z_2)\quad \text{and}\quad d_2^{0,1}([u]))=\kappa(z_1,u).
\]
The discussion in Section \ref{Section Bott} shows that the canonical trace on $\mathrm{C}^*(H_3)$ prevents $[1]\in K_0(\mathrm{C}^*(H_3))$ from being a Bott element associated with two exactly commuting unitaries. Since $\alpha$ is pointwise inner, every element in the image of $d_2^{0,1}$ is representable in such a way. Hence,
\[
\op{im}(d_2^{0,1})= \IZ^2\left[\kappa(z_1,z_2),\ \kappa(z_1,u)\right]\subset K_0(\mathrm{C}^*(H_3)),
\]
which sits inside $K_0(\mathrm{C}^*(H_3))$ as a direct summand. 

It holds that $d_2^{0,0}([1])=0$, and moreover, by Proposition \ref{limitationsObstructionInner}, we have that $d_2^{0,0}\circ d_2^{0,1}=0$. It now follows from Proposition \ref{KtheoryHeisenberg} that $d_2^{0,0}=0$. 

If $G_0:=K_0(\mathrm{C}^*(H_3))$ and $G_1:=K_1(\mathrm{C}^*(H_3))$, then the Pimsner-Voicu\-lescu sequence for $(\mathrm{C}^*(H_3)\rtimes_{\alpha_1}\IZ,\check{\alpha}_2,\IZ)$ is of the form
\begin{align}
\label{HeisenbergPV}
\begin{xy}
\xymatrix{
G_1 \oplus G_0 \ar[r]^{d_2^{0,1}\oplus 0} & G_0 \oplus G_1 \ar[r] & K_0(\mathrm{C}^*(H_3)\rtimes_{\alpha}\IZ^2) \ar[d]\\
K_1(\mathrm{C}^*(H_3)\rtimes_\alpha\IZ^2) \ar[u] & G_1 \oplus G_0 \ar[l] & G_0 \oplus G_1 \ar[l]_0
}
\end{xy}
\end{align}
The result now follows by splitting up this six-term exact sequence into two extension, and then comparing the ranks of the occurring abelian groups.
\end{proof}

We also find pointwise inner $\IZ^2$-actions on $\mathrm{C}^*(H_3)$ whose corresponding crossed products have torsion in $K$-theory.

\begin{cor}
Let $m,n\in \IN$ and denote by $\tilde{\alpha}$ the pointwise inner $\IZ^2$-action on $\mathrm{C}^*(H_3)$ generated by $\alpha_1^m$ and $\alpha_2^n$. Then
\[\def\arraystretch{1.5}
\begin{array}{l}
K_0(\mathrm{C}^*(H_3)\rtimes_{\tilde{\alpha}}\IZ^2)\cong \IZ^{10}\oplus \IZ\big/mn\IZ\oplus\IZ\big/mn\IZ,\\
K_1(\mathrm{C}^*(H_3)\rtimes_{\tilde{\alpha}} \IZ^2)\cong \IZ^{10}.
\end{array}
\]
\end{cor}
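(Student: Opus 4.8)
The plan is to rerun the proof of Theorem~\ref{HeisenbergAction} essentially verbatim, isolating the only quantity that changes when $\alpha$ is replaced by $\tilde\alpha$ --- the associated commutator --- and tracking how it rescales the second page differential. All the $K$-theoretic bookkeeping is then a rank-and-torsion count in the Pimsner-Voiculescu sequence \eqref{HeisenbergPV}.

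First I would recompute the commutator. In the picture $\mathrm{C}^*(H_3)\cong\CC(\IT^2)\rtimes_\sigma\IZ$ we have $\tilde\alpha_1=\ad(u^m)$ and $\tilde\alpha_2=\ad(z_2^n)$, and iterating $\sigma(z_2)=z_1z_2$ gives $\sigma^m(z_2^n)=z_1^{mn}z_2^n$, hence $u^m z_2^n (u^m)^*=z_1^{mn}z_2^n$. Therefore
\[
u(\tilde\alpha)=(u^m)^*(z_2^n)^*u^m z_2^n=z_1^{mn}\in\CZ(\mathrm{C}^*(H_3)).
\]
Next I would show that the second page differential simply rescales: $d_2^{0,*}(\tilde\alpha)=mn\cdot d_2^{0,*}(\alpha)$. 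By Corollary~\ref{obstructionMapPointwiseInner} both differentials are given by Bott elements against the (central) commutator. Since $z_1$ is central it commutes exactly with every unitary and projection over $\mathrm{C}^*(H_3)$, so parts iii) and iv) of Proposition~\ref{propertiesBott0} yield, for every relevant $y$ and $p$,
\[
\kappa\bigl(z_1^{mn},y\bigr)=-\kappa\bigl(y,z_1^{mn}\bigr)=-mn\,\kappa(y,z_1)=mn\,\kappa(z_1,y),\qquad
\kappa\bigl(p,z_1^{mn}\bigr)=mn\,\kappa(p,z_1).
\]
Comparing with the corresponding formulas for $\alpha$ (where $u(\alpha)=z_1$) gives $d_2^{0,1}(\tilde\alpha)=mn\cdot d_2^{0,1}(\alpha)$ and $d_2^{0,0}(\tilde\alpha)=mn\cdot d_2^{0,0}(\alpha)=0$, the last equality because $d_2^{0,0}(\alpha)=0$ was already established in Theorem~\ref{HeisenbergAction}.

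With the rescaling in hand I would read off the kernel and cokernel of $d_2^{0,1}(\tilde\alpha)$ from the structure of $d_2^{0,1}(\alpha)$. Theorem~\ref{HeisenbergAction} shows that $\op{im}(d_2^{0,1}(\alpha))=\IZ^2\bigl[\kappa(z_1,z_2),\kappa(z_1,u)\bigr]$ is a rank-two \emph{direct summand} of $K_0(\mathrm{C}^*(H_3))\cong\IZ^3$ (Proposition~\ref{KtheoryHeisenberg}), with kernel of rank one. Multiplying the map by $mn$ leaves the kernel unchanged, since the domain is torsion-free, and replaces the image by $mn$ times that summand. Because the summand splits off, this produces exactly the torsion I want and no more:
\[
\kernel\bigl(d_2^{0,1}(\tilde\alpha)\bigr)\cong\IZ,\qquad
\coker\bigl(d_2^{0,1}(\tilde\alpha)\bigr)\cong\IZ\oplus\IZ\big/mn\IZ\oplus\IZ\big/mn\IZ.
\]

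Finally I would feed this into the six-term sequence \eqref{HeisenbergPV} for $(\mathrm{C}^*(H_3)\rtimes_{\tilde\alpha_1}\IZ,\check{\tilde\alpha}_2,\IZ)$, whose two connecting maps are $d_2^{0,1}(\tilde\alpha)\oplus0$ and $d_2^{0,0}(\tilde\alpha)\oplus0=0$ by Proposition~\ref{differentialObstructionHomomorphism} and the Paschke identification. Writing $G_0=K_0(\mathrm{C}^*(H_3))$ and $G_1=K_1(\mathrm{C}^*(H_3))$, so that $K_0(\mathrm{C}^*(H_3)\rtimes_{\tilde\alpha_1}\IZ)\cong G_1\oplus G_0$ and $K_1(\mathrm{C}^*(H_3)\rtimes_{\tilde\alpha_1}\IZ)\cong G_0\oplus G_1$, splitting the cyclic sequence at the zero map gives two short exact sequences
\[
0\to G_1\oplus\coker\bigl(d_2^{0,1}(\tilde\alpha)\bigr)\to K_0(\mathrm{C}^*(H_3)\rtimes_{\tilde\alpha}\IZ^2)\to G_0\oplus G_1\to0,
\]
\[
0\to G_0\oplus G_1\to K_1(\mathrm{C}^*(H_3)\rtimes_{\tilde\alpha}\IZ^2)\to\kernel\bigl(d_2^{0,1}(\tilde\alpha)\bigr)\oplus G_0\to0.
\]
Both split because the respective quotients $G_0\oplus G_1\cong\IZ^6$ and $\kernel(d_2^{0,1}(\tilde\alpha))\oplus G_0\cong\IZ^4$ are free. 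Counting free ranks ($3+1+6=10$ and $6+1+3=10$) and the surviving torsion $(\IZ/mn\IZ)^2$ then yields
\[
K_0(\mathrm{C}^*(H_3)\rtimes_{\tilde\alpha}\IZ^2)\cong\IZ^{10}\oplus\IZ\big/mn\IZ\oplus\IZ\big/mn\IZ,\qquad
K_1(\mathrm{C}^*(H_3)\rtimes_{\tilde\alpha}\IZ^2)\cong\IZ^{10}.
\]
The main obstacle I anticipate is the rescaling step: one must be confident that the Bott-element identities of Proposition~\ref{propertiesBott0} apply uniformly across \emph{all} of $K_*(\mathrm{C}^*(H_3))$, including the matrix-level generator $\kappa(e,x^*u^{(2)})$, which is legitimate precisely because $z_1^{mn}$ is central and hence commutes exactly with every representing unitary. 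The only other point requiring care is that $\op{im}(d_2^{0,1}(\alpha))$ is a direct summand, so that multiplication by $mn$ contributes exactly $(\IZ/mn\IZ)^2$ with no extraneous torsion; everything after that is routine rank arithmetic.
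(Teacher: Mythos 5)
Your proposal is correct and follows essentially the same route as the paper: the paper's proof simply notes that $\tilde{d}_2 = mn\cdot d_2$ and then reruns the argument of Theorem \ref{HeisenbergAction} with the rescaled differential in the sequence \eqref{HeisenbergPV}, which is exactly what you do, with the commutator computation $u(\tilde{\alpha})=z_1^{mn}$, the Bott-element rescaling, and the kernel/cokernel count spelled out in more detail. The only cosmetic quibble is that the identity $\kappa(p,z_1^{mn})=mn\,\kappa(p,z_1)$ is not literally an instance of Proposition \ref{propertiesBott0} (which concerns the $K_0$-valued Bott elements of pairs of unitaries) but follows immediately from $[w^{mn}]=mn[w]$ in $K_1$; this does not affect the argument.
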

\begin{proof}
Let $d_2$ and $\tilde{d}_2$ denote the second page differentials of the spectral sequence for $\alpha$ and $\tilde{\alpha}$, respectively. One checks that $\tilde{d}_2=mn\cdot d_2$. Basically the same proof as in Theorem \ref{HeisenbergAction} shows that the $K$-theory of $\mathrm{C}^*(H_3)\rtimes_{\tilde{\alpha}}\IZ^2$ fits into the exact sequence \eqref{HeisenbergPV} with $d_2^{0,1}$ replaced by $\tilde{d}^{0,1}_2=mn\cdot d^{0,1}_2$.
\end{proof}


\subsection{Certain pointwise inner actions on amalgamated free product \cstar-algebras}
\noindent
Throughout this section, $A$ denotes a unital, separable \cstar-algebra and $\alpha:\IZ^2\curvearrowright A$ a pointwise inner action whose associated commutator $u(\alpha)$ has full spectrum. Let $u,v\in A$ be unitaries satisfying  $\alpha_1=\ad(u)$ and $\alpha_2=\ad(v)$.

We start by constructing certain pointwise inner actions on amalgamated free product \cstar-algebras with non-trivial differential $d_2^{0,0}$. Let $B$ be a unital, separable \cstar-algebra whose $K$-groups both do not vanish. Also assume that there exists a central unitary $w\in \CU(\CZ(B))$, some $n\in \IN$, and a projection $p\in \CP_n(B)$ such that
\begin{align}
\label{forcedObstruction0}
\kappa(p,w^{(n)})\neq k[w]\in K_1(B)\quad \text{for all}\ k\in\IZ.
\end{align}
Observe that $w$ must have full spectrum.

Consider the two injective $*$-homomorphisms
\[
i_1:\CC(\IT)\into A,\ i_1(z):=u(\alpha)\quad \text{and}\quad i_2:\CC(\IT)\into B,\ i_2(z):=w,
\]
and form the amalgamated free product $C:=A\ast_{\CC(\IT)}B$ (see \cite[Section 10.11.11]{Blackadar1998} for a definition). There are natural unital $*$-homomorphisms $j_{1}:A\to C$ and $j_{2}:B\to C$, which are also injective by \cite[Theorem 3.1]{Blackadar1978}. Since $u(\alpha)=w$ is central in $C$, the action on $A$ extends to a pointwise inner $\IZ^{2}$-action on $C$, which we also denote by $\alpha$. The associated second page differential $d_2$ satisfies
\[
d_2^{0,0}([p])=\kappa(p,u(\alpha)^{(n)})=\kappa(p,w^{(n)})\in K_1(C).
\]

\begin{lemma}
\label{Examplesd0}
We have that $d_2^{0,0}([p])=\kappa(p,w^{(n)})\neq 0\in K_1(C)$.
\end{lemma}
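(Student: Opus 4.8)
The plan is to detect the class $\kappa(p,w^{(n)})$ through the Mayer--Vietoris six-term exact sequence in $K$-theory associated with the full amalgamated free product $C=A\ast_{\CC(\IT)}B$. First I would observe that, since $w\in\CZ(B)$ is central, $w^{(n)}$ commutes with $p$ in $M_n(B)$, so the $K_1$-Bott element $\kappa(p,w^{(n)})$ is already defined in $K_1(B)$; directly from the defining formula $\kappa(p,w^{(n)})=[pw^{(n)}p+1_n-p]$ and the fact that $j_2\colon B\to C$ is a unital $*$-homomorphism, the class $d_2^{0,0}([p])=\kappa(p,w^{(n)})\in K_1(C)$ (computed in the paragraph preceding the lemma) is exactly $K_1(j_2)$ applied to the class $\kappa(p,w^{(n)})\in K_1(B)$. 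It therefore suffices to show that $K_1(j_2)$ does not annihilate $\kappa(p,w^{(n)})\in K_1(B)$.

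Because the amalgamated copy of $\CC(\IT)$ is central in both $A$ (as $u(\alpha)\in\CZ(A)$) and $B$, the inclusions $i_1,i_2$ admit conditional expectations onto $\CC(\IT)$, so $C$ fits into a Mayer--Vietoris six-term exact sequence for full amalgamated free products (available in the presence of conditional expectations onto the amalgam, cf.\ Germain's computation of the $KK$-theory of full amalgamated free product \cstar-algebras). The relevant segment is
\[
K_1(\CC(\IT))\xrightarrow{\ \delta\ }K_1(A)\oplus K_1(B)\xrightarrow{\ \sigma\ }K_1(C),
\]
where $\delta(x)=(K_1(i_1)(x),-K_1(i_2)(x))$ and $\sigma(a,b)=K_1(j_1)(a)+K_1(j_2)(b)$. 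Here $K_1(\CC(\IT))=\IZ[z]$, and by construction $K_1(i_1)([z])=[u(\alpha)]$ and $K_1(i_2)([z])=[w]$, so the image of $\delta$ is precisely $\{(k[u(\alpha)],-k[w])\ :\ k\in\IZ\}$.

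Now I would argue by contradiction: suppose $K_1(j_2)(\kappa(p,w^{(n)}))=0$. Then $\sigma(0,\kappa(p,w^{(n)}))=0$, so by exactness $(0,\kappa(p,w^{(n)}))\in\im(\delta)$. Comparing second coordinates yields $\kappa(p,w^{(n)})=-k[w]=(-k)[w]$ for some $k\in\IZ$, which contradicts the standing hypothesis \eqref{forcedObstruction0} that $\kappa(p,w^{(n)})\neq k[w]$ for every $k\in\IZ$. Hence $\kappa(p,w^{(n)})=d_2^{0,0}([p])\neq0$ in $K_1(C)$. Note that only the $B$-coordinate of $\delta$ enters the argument, which is exactly the quantity that \eqref{forcedObstruction0} is designed to control.

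The main obstacle is the first step: setting up the Mayer--Vietoris sequence with the correct maps and signs, and in particular justifying the existence of the conditional expectations $A\to\CC(\IT)$ and $B\to\CC(\IT)$. These exist precisely because $\CC(\IT)$ is embedded centrally, so that $A$ and $B$ are $\CC(\IT)$-algebras and the expectations can be assembled from a continuous field of states on the fibres over the compact metrizable space $\IT$ (a selection argument). Once the sequence and the identification of $\delta$ are in place, the exactness computation above is routine.
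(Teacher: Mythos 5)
Your argument is essentially the paper's: both proofs run the six-term exact sequence of the full amalgamated free product $C=A\ast_{\CC(\IT)}B$ at the spot $K_1(\CC(\IT))\to K_1(A)\oplus K_1(B)\to K_1(C)$, observe that $\kappa(p,w^{(n)})\in K_1(C)$ is the image of the element of $K_1(A)\oplus K_1(B)$ whose only nonzero coordinate is $\pm\kappa(p,w^{(n)})\in K_1(B)$, and conclude from exactness that vanishing in $K_1(C)$ would force $\kappa(p,w^{(n)})=k[w]$ in $K_1(B)$ for some $k\in\IZ$, contradicting \eqref{forcedObstruction0}. (Your version argues by contradiction and does not even need the paper's observation that $[u(\alpha)]=0\in K_1(A)$, since only the $B$-coordinate matters; that is a harmless cosmetic difference.)

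The one place where your write-up is weaker than the paper's is the justification of the exact sequence itself. You invoke a Mayer--Vietoris sequence that requires conditional expectations $A\to\CC(\IT)$ and $B\to\CC(\IT)$, and you assert these exist "precisely because $\CC(\IT)$ is embedded centrally." That is not automatic: a central unital copy of $\CC(\IT)$ makes $A$ and $B$ into $\CC(\IT)$-algebras, but these are in general only upper semicontinuous fields, and a continuous selection of fibrewise states (hence a conditional expectation) need not exist without further hypotheses. The paper sidesteps this entirely by citing Thomsen's six-term exact sequence for full amalgamated free products \cite[Theorem 6.1]{Thomsen2003}, which applies here without producing any conditional expectations. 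So either cite that result directly, or supply an actual argument for the expectations; as written, that step is a gap, though not one that affects the truth of the lemma or the rest of your computation.
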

\begin{proof}
By \cite[Theorem 6.1]{Thomsen2003}, there exists a six-term exact sequence
\begin{align}
\label{sixTermAmalgamated}
\begin{xy}	
\xymatrix@C+0.2cm{
K_0(\CC(\IT)) \ar[rr]^/-0.8em/{K_0(i_1)\oplus K_0(i_2)}& & K_0(A)\oplus K_0(B)\ar[rr]^/1em/{K_0(j_1)-K_0(j_2)} & & K_0(C)\ar[d] \\					
 K_1(C) \ar[u] & & K_1(A)\oplus K_1(B)\ar[ll]_/1em/{K_1(j_1)-K_1(j_2)} & & K_1(\CC(\IT)) \ar[ll]_/-0.8em/{K_1(i_1)\oplus K_1(i_2)}
}
\end{xy}
\end{align}
Since 
\[
(K_1(j_1)-K_1(j_2))(0\oplus -\kappa(p,w^{(n)}))=\kappa(p,w^{(n)}),
\]
it suffices to check that $0\oplus-\kappa(p,w^{(n)})\notin \im(K_1(i_1)\oplus K_1(i_2))$. We have that $[u(\alpha)]=0\in K_1(A)$, and therefore
\[
\im(K_1(i_1)\oplus K_1(i_2))=0\oplus \IZ[w].
\]
By assumption, $\kappa(p,w^{(n)})\neq k[w]$ for all $k\in\IZ$, and the proof is complete.
\end{proof}

As the conditions on $A$ and $B$ are very mild, Lemma \ref{Examplesd0} applies in many situations. We would like to discuss one example, which is of particular interest. Take $A:=C^*(H_{3})$ and equip it with the natural action $\alpha$ defined in the last subsection. Let $B:=\CC(\IT)\oplus \CC(\IT)$ and set $w:=z\oplus z$ and $p:=1\oplus 0$. Observe that these elements satisfy \eqref{forcedObstruction0}. Define the amalgamated free product $C_1:=A\ast_{\CC(\IT)}B$ and consider the pointwise inner action on $C_1$ induced by $\alpha:\IZ^2\curvearrowright A$, which we also denote by $\alpha$. By Lemma \ref{Examplesd0}, $\kappa(p,w)\neq 0\in K_1(C_1)$.

The \cstar-dynamical system $(C_1,\alpha,\IZ^2)$ is universal for $K_1$-obstructions for second page differentials associated with pointwise inner $\IZ^2$-actions in the following sense. For every unital \cstar-algebra $D$, any pointwise inner action $\gamma:\IZ^2\curvearrowright D$ with $\gamma_1=\ad(\bar{u})$ and $\gamma_2=\ad(\bar{v})$, and every projection $\bar{p}\in D$, there is a unital and equivariant $*$-homomorphism 
\[
\varphi:(C_1,\alpha,\IZ^2)\to (D,\gamma,\IZ^2),\quad u\mapsto \bar{u},\ v\mapsto \bar{v},\ p\mapsto \bar{p}.
\]
By naturality of the Baum-Connes spectral sequence,
\[
K_1(\varphi)(d_2^{0,0}([p]))=\tilde{d}_2^{0,0}([\bar{p}]),
\]
where $\tilde{d}_2$ denotes the second page differentials associated with $\gamma$. Therefore, we can think of $d_2^{0,0}([p])=\kappa(p,u(\alpha))\in K_1(C_1)$ as the universal $K_1$-obstruction for second page differentials associated with pointwise inner $\IZ^{2}$-actions.

The universal property of $(C_1,\alpha,\IZ^2)$ also yields that $\kappa(p,u(\alpha))\in K_1(C_1)$ has infinite order and induces a split-injection $\IZ\left[\kappa(p,u(\alpha))\right]\into K_1(C_1)$. To see this, consider the \cstar-dynamical system $(\mathrm{C}^*(H_3)\otimes \CO^\infty,\alpha\otimes \id, \IZ^2)$, where $\CO^\infty$ is the (unique) UCT Kirchberg algebra with $K_0(\CO^\infty)=0$ and $K_1(\CO^\infty)\cong \IZ$. The proof of Theorem \ref{HeisenbergAction} and the fact that $\mathrm{C}^*(H_3)\otimes \CO^\infty$ is properly infinite show that there exists a projection $q\in \mathrm{C}^*(H_3)\otimes \CO^{\infty}$ such that the cyclic subgroup
generated by
\[
0\neq d_2^{0,0}([q])\in K_1(\mathrm{C}^*(H_3)\otimes \CO^{\infty})
\]
sits inside $K_1(\mathrm{C}^*(H_3)\otimes \CO^{\infty})\cong \IZ^3$ as a direct summand. Hence, the universal property of $(C_1,\alpha,\IZ^2)$ applied to $(\mathrm{C}^*(H_3)\otimes \CO^\infty,\alpha\otimes \id, \IZ^2)$ and $q$ yields the desired result.

\begin{prop}
\label{KtheoryC1}
The canonical embedding $j_1:A\into C_1$ is split-injective in $K$-theory and induces the following decompositions:
\[
K_0(C_1)\cong K_0(A)\oplus \IZ[p]\quad \text{and}\quad K_1(C_1)\cong K_1(A)\oplus \IZ[\kappa(p,u(\alpha))].
\]
In particular, $K_0(C_1)\cong K_1(C_1)\cong \IZ^4$.
\end{prop}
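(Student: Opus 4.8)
The plan is to feed the data $i_1(z)=u(\alpha)$ and $i_2(z)=w$ into Thomsen's six-term exact sequence \eqref{sixTermAmalgamated} and to show that both horizontal maps $K_*(i_1)\oplus K_*(i_2)$ are injective. Since the target of each connecting map in \eqref{sixTermAmalgamated} is immediately followed by such an injective map, both connecting maps vanish, and \eqref{sixTermAmalgamated} breaks into two short exact sequences
\[
0\longrightarrow K_*(\CC(\IT))\xrightarrow{K_*(i_1)\oplus K_*(i_2)} K_*(A)\oplus K_*(B)\xrightarrow{K_*(j_1)-K_*(j_2)} K_*(C_1)\longrightarrow 0.
\]
First I would verify the injectivity. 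All four $*$-homomorphisms $i_1,i_2,j_1,j_2$ are unital, so $K_0(i_1)([1])=[1]$ is a free generator of $K_0(A)=\IZ^3$ by Proposition \ref{KtheoryHeisenberg}; hence $K_0(i_1)\oplus K_0(i_2)$ is injective. On $K_1$, the element $K_1(i_1)([z])=[u(\alpha)]=[z_1]$ vanishes in $K_1(A)$ — this is precisely the fact established in the proof of Lemma \ref{Examplesd0} (and read off from the Pimsner-Voiculescu computation in Proposition \ref{KtheoryHeisenberg}) — whereas $K_1(i_2)([z])=[w]=[z\oplus z]$ has infinite order in $K_1(B)=\IZ^2$; so $K_1(i_1)\oplus K_1(i_2)$ is injective as well.

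From the two short exact sequences I read off $K_*(C_1)$ as a cokernel. Since the generators $([1],[1_B])$ and $(0,[w])$ of the two images are primitive, both cokernels are free of rank $4$, giving $K_0(C_1)\cong K_1(C_1)\cong\IZ^4$. To obtain the asserted splitting I would show that the two maps
\[
K_0(A)\oplus\IZ[p]\longrightarrow K_0(C_1),\qquad K_1(A)\oplus\IZ[\kappa(p,u(\alpha))]\longrightarrow K_1(C_1)
\]
induced by $K_*(j_1)$ and the evident inclusions are surjective. For this I use that $K_*(j_2)$ carries $K_*(B)$ into $\im(K_*(j_1))+\IZ[\,\cdot\,]$: on $K_0$ this rests on $[1_{C_1}]=K_0(j_1)([1])$ together with the fact that $\{[p],[1_B]\}$ is a $\IZ$-basis of $K_0(B)$; on $K_1$ it rests on the amalgamation relation $j_1\circ i_1=j_2\circ i_2$, which together with $[z_1]=0$ forces $K_1(j_2)([w])=0$, while a direct computation gives $\kappa(p,w)=[z\oplus 1]$, so that $\{[w],\kappa(p,w)\}$ is a $\IZ$-basis of $K_1(B)$. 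A surjection between free abelian groups of the same finite rank is an isomorphism, which identifies $\im(K_*(j_1))$ as a direct summand with the stated complement; in particular $j_1$ is split-injective in $K$-theory.

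The main obstacle is the careful bookkeeping inside \eqref{sixTermAmalgamated}: one must pin down the images $K_*(i_\ell)([z])$ and the behaviour of $K_*(j_2)$ precisely, and the two delicate inputs are the vanishing $[z_1]=0\in K_1(A)$ and the explicit value $\kappa(p,w)=[z\oplus 1]\in K_1(B)$. Once these are in place, the vanishing of the connecting maps and the equal-rank surjection argument are routine.
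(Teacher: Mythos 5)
Your proposal is correct, and its first half coincides with the paper's argument: both feed the generators into Thomsen's sequence \eqref{sixTermAmalgamated}, check that $K_*(i_1)\oplus K_*(i_2)$ is injective (using $[u(\alpha)]=[z_1]=0\in K_1(A)$ and the explicit generators from Proposition \ref{KtheoryHeisenberg}), reduce to a short exact sequence, and count ranks to get $K_*(C_1)\cong\IZ^4$. Where you genuinely diverge is the splitting. The paper invokes the universal property of the amalgamated free product to build a retraction $\varphi:C_1\to A$ with $\varphi\circ j_1=\id_A$, imports the earlier fact (proved by mapping to $(\mathrm{C}^*(H_3)\otimes\CO^\infty,\alpha\otimes\id)$) that $\IZ[\kappa(p,u(\alpha))]\into K_1(C_1)$ and $\IZ[p]\into K_0(C_1)$ are split-injective, and then checks $[p]\notin\im K_0(j_1)$ and $\kappa(p,u(\alpha))\notin\im K_1(j_1)$ by exactness computations. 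You instead chase generators: since $K_0(j_2)([1_B])=[1_{C_1}]=K_0(j_1)([1_A])$ and, via $j_1\circ i_1=j_2\circ i_2$, $K_1(j_2)([w])=K_1(j_1)([u(\alpha)])=0$, while $\{[p],[1_B]\}$ and $\{[w],\kappa(p,w)\}=\{[w],[z\oplus1]\}$ are bases of $K_0(B)$ and $K_1(B)$, the image of $K_*(j_2)$ lands in $\im K_*(j_1)+\IZ[p]$ resp.\ $\im K_*(j_1)+\IZ[\kappa(p,u(\alpha))]$; hence $K_*(A)\oplus\IZ\to K_*(C_1)$ is onto and, as a surjection of free abelian groups of equal rank $4$, an isomorphism. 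Your route is more self-contained (no retraction, no $\CO^\infty$ detour) and is arguably tighter at the last step, since the equal-rank surjection argument delivers the direct sum decomposition outright, whereas combining ``split-injectively embedded and not in $\im K_*(j_1)$'' requires a little extra care to rule out an index; what the paper's approach buys in exchange is the structurally useful \cstar-algebraic retraction $\varphi$ and the reuse of the universal-property machinery set up just before the proposition.
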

\begin{proof}
A short computation shows that 
\[
K_*(i_1)\oplus K_*(i_2):K_*(\CC(\IT))\to K_*(A)\oplus K_*(B)
\]
is split-injective. Hence the six-term exact sequence \eqref{sixTermAmalgamated} associated with the amalgamated free product $C_1=A\ast_{C(\IT)}B$ reduces to a split-extension
\begin{align}
\label{sequenceK1universal}
\begin{xy}
\xymatrix{
0 \ar[r] & K_*(\CC(\IT)) \ar[rr]^/-0.3cm/{K_*(i_1)\oplus K_*(i_2)} & & K_*(A) \oplus K_*(B) \ar@/_0.7cm/[ll] \ar[rr]^/0.35cm/{K_*(j_1)- K_*(j_2)} & & K_*(C_1) \ar[r] & 0.
	}
\end{xy}
\end{align}
Consequently, $K_*(C_1)$ is torsion-free. By recalling that the $K$-theory of the Heisenberg group \cstar-algebra $A$ satisfies $K_0(A)\cong K_1(A)\cong \IZ^3$, we conclude that $K_0(C_1)\cong K_1(C_1)\cong \IZ^4$.

The universal property of the amalgamated free product yields a homomorphism $\varphi:C_1\to A$ satisfying $\varphi\circ j_1=\id_A$, $(\varphi\circ j_2)(p)=1$, and $(\varphi\circ j_2)(w)=u(\alpha)$. Obviously, $\varphi$ is surjective with splitting $j_1:A\into C_1$. This shows that $K_*(j_1)$ is split-injective. Moreover, we get that $[p]\in K_0(C_1)$ has infinite order and induces a split-injection $\IZ[p]\into K_0(C_1)$. Since we already know that the analogous statement for $\kappa(p,u(\alpha))\in K_1(C_1)$ is true as well, it remains to show that $[p]$ and $\kappa(p,u(\alpha))$ both do not lie in $K_*(j_1)(K_*(A))\subseteq K_*(C_1)$.

Suppose that there is some $g\in K_0(A)$ with $K_0(j_1)(g)=[p]$. As
\[
(K_0(j_1) - K_0(j_2))(0\oplus-[1\oplus 0])=[p],
\]
exactness of \eqref{sequenceK1universal} yields the existence of some $k\in \IZ$ satisfying
\[ 
k([1]\oplus[1\oplus 1])+ g\oplus0=0\oplus -[1\oplus0].
\]
This is a contradiction, and thus $[p]\notin K_0(j_1)(K_0(A))$. A similar argument yields $\kappa(p,u(\alpha))\notin K_1(j_1)(K_1(A))$ if one uses that
\[
(K_1(j_1)-K_1(j_2))(0\oplus-[z\oplus0])=\kappa(p,u(\alpha))\in K_1(C_1).
\]
\end{proof}

\begin{theorem}
\label{K1universal}
The second page differential $d_2$ associated with $\alpha:\IZ^2\curvearrowright A$ is non-trivial. The $K$-theory of the crossed product $C_1\rtimes_\alpha\IZ^2$ satisfies
\[
K_0(C_1\rtimes_{\alpha}\IZ^2)\cong K_0(C_1\rtimes_\alpha\IZ^2)\cong \IZ^{13}. 
\]
In particular, $K_*(C_1\rtimes_{\alpha}\IZ^2)\ncong K_*(C_1\otimes \CC(\IT^2))$.
\end{theorem}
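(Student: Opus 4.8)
The plan is to reduce everything to the Pimsner--Voiculescu sequence, exactly as in the proof of Theorem~\ref{HeisenbergAction}, with the computation of the two pieces of $d_2$ as the real work. Statement~(1) comes for free: by Corollary~\ref{obstructionMapPointwiseInner} and Lemma~\ref{Examplesd0}, the action on $C_1$ satisfies $d_2^{0,0}([p])=\kappa(p,u(\alpha)^{(n)})=\kappa(p,w)\neq0$ in $K_1(C_1)$, so $d_2\neq0$. For~(2) I would feed $d_2$ into the Pimsner--Voiculescu sequence of $(C_1\rtimes_{\alpha_1}\IZ,\check\alpha_2,\IZ)$. Since $\alpha_1=\op{Ad}(u)$ and $\alpha_2=\op{Ad}(v)$ are inner, $K_*(\alpha_i)=\id$; hence $C_1\rtimes_{\alpha_1}\IZ\cong C_1\otimes\CC(\IT)$ and, writing $G_*:=K_*(C_1)\cong\IZ^4$ (Proposition~\ref{KtheoryC1}), one gets $K_i(C_1\rtimes_{\alpha_1}\IZ)\cong G_0\oplus G_1\cong\IZ^8$. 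By Proposition~\ref{differentialObstructionHomomorphism} and the Snake Lemma description preceding it, the endomorphisms $K_*(\check\alpha_2)-\id$ are block-nilpotent with $d_2^{0,1}$ and $d_2^{0,0}$ in their off-diagonal corners, exactly as in~\eqref{HeisenbergPV}; splitting the resulting six-term sequence then reduces~(2) to determining $\op{im}(d_2^{0,0})$ and $\op{im}(d_2^{0,1})$.

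Next I would evaluate these on the bases of $K_*(C_1)$ from Proposition~\ref{KtheoryC1}. For $d_2^{0,0}\colon K_0(C_1)\to K_1(C_1)$: the generators $\kappa(z_1,z_2)$ and $\kappa(z_1,u)$ lie in $\op{im}(d_2^{0,1})$, hence are killed by $d_2^{0,0}$ since $d_2^{0,0}\circ d_2^{0,1}=0$ (Proposition~\ref{limitationsObstructionInner}); one has $d_2^{0,0}([1])=[u(\alpha)]=[w]=0$, the vanishing being read off the Mayer--Vietoris sequence~\eqref{sixTermAmalgamated}; and $d_2^{0,0}([p])=\kappa(p,w)$ is nonzero of infinite order. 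Thus $\op{im}(d_2^{0,0})=\IZ[\kappa(p,w)]$ is a rank-one direct summand of $K_1(C_1)$.

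For $d_2^{0,1}\colon K_1(C_1)\to K_0(C_1)$ one has $d_2^{0,1}([z_2])=\kappa(z_1,z_2)$ and $d_2^{0,1}([u])=\kappa(z_1,u)$ (Corollary~\ref{obstructionMapPointwiseInner}), which are independent, while $d_2^{0,1}(\kappa(p,w))=0$ again by Proposition~\ref{limitationsObstructionInner}. The one delicate point --- and the step I expect to be the main obstacle --- is to control $d_2^{0,1}$ on the remaining generator $\kappa(e,x^*u^{(2)})$, i.e.\ to show it contributes nothing to the rank. Here I would invoke naturality of the Baum--Connes spectral sequence under the \emph{equivariant} split inclusion $j_1\colon A\into C_1$: combined with the Heisenberg computation $\op{im}(d_2^{0,1})=\IZ^2[\kappa(z_1,z_2),\kappa(z_1,u)]$ from Theorem~\ref{HeisenbergAction}, naturality forces $d_2^{0,1}(\kappa(e,x^*u^{(2)}))$ into that rank-two summand. (The trace argument used for $A=\mathrm{C}^*(H_3)$ is an alternative, but producing a suitable trace on the amalgamated free product is more delicate, so naturality seems cleaner.) Hence $\op{im}(d_2^{0,1})=\IZ^2[\kappa(z_1,z_2),\kappa(z_1,u)]$ is a rank-two direct summand of $K_0(C_1)$.

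With both images identified as direct summands, the kernels and cokernels of $K_*(\check\alpha_2)-\id$ are free, so both induced short exact sequences split; a rank count then gives
\[
\op{rank}K_0(C_1\rtimes_\alpha\IZ^2)=\big((4-2)+4\big)+\big(4+(4-1)\big)=6+7=13,
\]
and symmetrically $K_1(C_1\rtimes_\alpha\IZ^2)\cong\IZ^{13}$, with no torsion since every subquotient involved is free. Finally, for~(3), the trivial action yields $C_1\rtimes_{\id}\IZ^2\cong C_1\otimes\CC(\IT^2)$, whose $K$-groups are $\IZ^{16}$ by the K\"unneth theorem (using $K_*(\CC(\IT^2))\cong\IZ^2\oplus\IZ^2$ and $K_*(C_1)\cong\IZ^4\oplus\IZ^4$). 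As $\IZ^{13}\ncong\IZ^{16}$, the two crossed products cannot be isomorphic in $K$-theory.
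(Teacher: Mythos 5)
Your proposal is correct and follows essentially the same route as the paper: nontriviality via Lemma \ref{Examplesd0}, importing the Heisenberg computation of $d_2^{0,*}$ on $K_*(A)\subset K_*(C_1)$ through naturality of the spectral sequence under the split-injective $j_1$ (which is exactly how the paper, too, controls the generator $\kappa(e,x^*u^{(2)})$), Proposition \ref{limitationsObstructionInner} for $d_2^{0,0}\circ d_2^{0,1}=0$, and then the Pimsner--Voiculescu sequence for $(C_1\rtimes_{\alpha_1}\IZ,\check{\alpha}_2,\IZ)$ with the same rank count $7+6=13$. The only differences are presentational (generator-by-generator evaluation and the explicit K\"unneth computation of $K_*(C_1\otimes \CC(\IT^2))\cong\IZ^{16}$, which the paper leaves implicit).
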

\begin{proof}
We have that
\[
d_2^{0,0}([p])=\kappa(p,u(\alpha))\quad \text{and}\quad d_2^{0,1}(\kappa(p,u(\alpha)))=0.
\]
Note that the second equality is a consequence of Proposition \ref{limitationsObstructionInner}. As $K_*(j_1):K_*(A)\to K_*(C_1)$ is split-injective by Proposition \ref{KtheoryC1}, naturality of the Baum-Connes spectral sequence yields that on $K_*(A)\subset K_*(C_1)$, $d_2^{0,*}:K_*(C_1)\to K_{*-1}(C_1)$ coincides with the second page differential associated with $(A,\alpha,\IZ^2)$. The proof of Theorem \ref{HeisenbergAction} therefore yields that 
\[
\coker(d_2^{0,0})\cong \kernel(d_2^{0,0})\cong \IZ^3\quad \text{and}\quad \coker(d_2^{0,1})\cong \kernel(d_2^{0,1})\cong \IZ^2.
\]
As in the proof of Theorem \ref{HeisenbergAction}, the statement now follows from the Pimsner-Voiculescu sequence associated with $(C_1\rtimes_{\alpha_1}\IZ,\check{\alpha}_2,\IZ)$.
\end{proof}

Let us present another instance of a \cstar-dynamical with non-trivial second page differential arising from the above construction. Whereas $(C_1,\alpha,\IZ^2)$ is interesting for its universal property, the next \cstar-dynamical system  is minimal concerning the $K$-groups of the underlying \cstar-algebra.

\begin{theorem}
There exists a unital, separable \cstar-algebra $C$ with $K_0(C)\cong K_1(C)\cong \IZ$, which admits a pointwise inner $\IZ^2$-action $\alpha$ that is pointwise homotopic to the trivial action inside $\Inn(A)$ and whose associated second page differential $d_2$ is non-trivial. The $K$-theory of the associated crossed product is given by
\[
K_0(C\rtimes_{\alpha}\IZ^2)\cong K_1(C\rtimes_\alpha\IZ^2)\cong \IZ^3.
\]
In particular, $K_*(C\rtimes_\alpha\IZ^2)\ncong K_*(C\otimes \CC(\IT^2))$
\end{theorem}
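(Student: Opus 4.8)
The plan is to realize $C$ through the construction of this subsection, choosing the building blocks so as to collapse the ambient $K$-theory while keeping a central full-spectrum commutator. Concretely, I would set $A:=\mathrm{C}^*(H_3)\otimes\CO_2$ and $B:=\CC(\IT)\oplus\CC(\IT)$, and put $C:=A\ast_{\CC(\IT)}B$. Writing $u,v\in\mathrm{C}^*(H_3)$ for the canonical generators, I equip $A$ with the pointwise inner action $\alpha_1:=\ad(u\otimes 1)$, $\alpha_2:=\ad(v\otimes 1)$, whose commutator is $u(\alpha)=z_1\otimes 1$; this is central with full spectrum $\IT$, so the standing hypotheses of the subsection are satisfied. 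Since $\mathrm{C}^*(H_3)$ lies in the bootstrap class and $K_*(\CO_2)=0$, the K\"unneth theorem gives $K_*(A)=0$. As $A$ is unital, separable and $\CO_2$-absorbing, hence properly infinite, the canonical map $\CU(A)/\CU(A)_0\to K_1(A)=0$ is an isomorphism, so every unitary of $A$---in particular $u\otimes 1$ and $v\otimes 1$---is homotopic to $1$. Consequently the induced action on $C$ is pointwise homotopic to the trivial action inside $\Inn(C)$, the required homotopies already taking place in $A\subseteq C$.

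For $B$ I would take the central unitary $w:=z\oplus z$ and the projection $p:=1\oplus 0$. Then $\kappa(p,w)=[z\oplus 1]=(1,0)$ while $[w]=(1,1)$ in $K_1(B)\cong\IZ^2$, so $\kappa(p,w)\neq k[w]$ for all $k\in\IZ$ and condition \eqref{forcedObstruction0} holds. Thus the construction yields a pointwise inner $\IZ^2$-action $\alpha$ on $C$ with $u(\alpha)=w$, and Lemma \ref{Examplesd0} gives $d_2^{0,0}([p])=\kappa(p,w)\neq 0\in K_1(C)$; in particular $d_2$ is non-trivial.

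To pin down the $K$-theory I would feed these data into the six-term sequence \eqref{sixTermAmalgamated}. Because $[1_A]=0=[u(\alpha)]$ in $K_*(A)=0$, both maps $K_*(i_1)\oplus K_*(i_2)\colon K_*(\CC(\IT))=\IZ\to K_*(A)\oplus K_*(B)=\IZ^2$ are the injection $1\mapsto(1,1)$; the two index maps therefore vanish and $K_0(C)\cong K_1(C)\cong\IZ^2/\langle(1,1)\rangle\cong\IZ$. Tracking generators, $K_0(C)$ is generated by the image of $[p]$ (note $[1_C]=0$) and $K_1(C)$ by the image of $[z\oplus 1]$; under these identifications $d_2^{0,0}([p])=\kappa(p,w)$ is exactly a generator of $K_1(C)$, so $d_2^{0,0}\colon K_0(C)\to K_1(C)$ is an isomorphism. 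Proposition \ref{limitationsObstructionInner} gives $d_2^{0,0}\circ d_2^{0,1}=0$, and injectivity of $d_2^{0,0}$ then forces $d_2^{0,1}=0$.

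Finally I would compute the crossed product exactly as in the proof of Theorem \ref{HeisenbergAction}, via the Pimsner-Voiculescu sequence for $(C\rtimes_{\alpha_1}\IZ,\check{\alpha}_2,\IZ)$. As $\alpha_1$ is inner, $C\rtimes_{\alpha_1}\IZ\cong C\otimes\CC(\IT)$ has $K_*\cong K_0(C)\oplus K_1(C)\cong\IZ^2$ in each degree, and the endomorphism $K_*(\check{\alpha}_2)-\id$ is, up to the Bott identifications, strictly triangular with off-diagonal entries $d_2^{0,1}=0$ and the isomorphism $d_2^{0,0}$. Since $d_2^{0,0}$ carries a generator to a generator, all occurring kernels and cokernels are free, and splicing the two resulting short exact sequences gives $K_0(C\rtimes_\alpha\IZ^2)\cong K_1(C\rtimes_\alpha\IZ^2)\cong\IZ^3$. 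As $K_*(C\otimes\CC(\IT^2))\cong\IZ^4$ by the K\"unneth theorem, the two crossed products are not $K$-theoretically isomorphic. The most delicate point I expect is the bookkeeping of classes in \eqref{sixTermAmalgamated} required to certify that $d_2^{0,0}$ is an honest isomorphism (and not merely non-zero), together with the verification of $K_1$-injectivity of the properly infinite algebra $A$ used to homotope $u\otimes1,v\otimes1$ to $1$.
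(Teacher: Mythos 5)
Your construction is the paper's own: the same $A=\mathrm{C}^*(H_3)\otimes\CO_2$, the same $B=\CC(\IT)\oplus\CC(\IT)$ with $w=z\oplus z$ and $p=1\oplus 0$, the same amalgam $C=A\ast_{\CC(\IT)}B$ and the same induced action, followed by Lemma \ref{Examplesd0}, the six-term sequence \eqref{sixTermAmalgamated}, Proposition \ref{limitationsObstructionInner}, and the Pimsner--Voiculescu sequence for $(C\rtimes_{\alpha_1}\IZ,\check{\alpha}_2,\IZ)$. Your $K$-theory bookkeeping (that $K_*(C)\cong\IZ$, that $d_2^{0,0}$ sends the generator $[p]$ to the generator $\kappa(p,w)$ and is hence an isomorphism, that $d_2^{0,1}=0$, and the rank count $\IZ^3$) is correct and in fact spelled out in more detail than in the paper, which only refers back to the computations of Proposition \ref{KtheoryC1} and Theorem \ref{HeisenbergAction}.

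The one point where you genuinely deviate is the justification that $u\otimes 1$ and $v\otimes 1$ are homotopic to $1$ in $\CU(A)$. You invoke the principle that for a unital properly infinite \cstar-algebra the canonical map $\CU(A)/\CU(A)_0\to K_1(A)$ is an isomorphism. Surjectivity of this map is classical, but its injectivity --- which is exactly what you need here, since you must pass from $[u\otimes 1]=0$ in $K_1(A)=0$ to $u\otimes 1\in\CU(A)_0$ --- is an open problem for general properly infinite unital \cstar-algebras (this is the subject of work of Blanchard, Rohde and R{\o}rdam). The paper avoids this by noting $A\cong A\otimes\CZ$ via Kirchberg's absorption theorem and then quoting Jiang's result that $\CZ$-stable unital \cstar-algebras are $K_1$-injective. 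Your conclusion is still correct for this particular $A$, precisely because $\CO_2$-absorbing algebras are $\CZ$-stable, but you should replace the appeal to proper infiniteness by the $\CZ$-stability argument (or some other valid $K_1$-injectivity criterion). Everything else stands.
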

\begin{proof}
Define $A:=\mathrm{C}^*(H_3)\otimes \CO_2$ and note that this \cstar-algebra has trivial $K$-theory, see \cite[Theorem 2.3]{Cuntz1981}. Kirchberg's absorption theorem \cite{KirchbergPhillips2000} implies that $A\cong A\otimes \CZ$, where $\CZ$ denotes the Jiang-Su algebra. Hence $A$ is $K_1$-injective by \cite[Corollary 2.10]{Jiang1997}. The unitaries $u\otimes 1$ and $v\otimes 1\in A$ are therefore homotopic to $1\in\CU(A)$. By identifying $u(\alpha)\otimes 1\in A$ with $z\oplus z\in \CC(\IT)\oplus \CC(\IT)$, we can form the amalgamated free product 
\[
C:=A\ast_{\CC(\IT)}(\CC(\IT)\oplus \CC(\IT)).
\]
Consider the pointwise inner $\IZ^2$-action $\alpha$ on $C$ induced by $\ad(u\otimes 1)$ and $\ad(v\otimes 1)$, which is obviously pointwise homotopic to the trivial action inside $\Inn(A)$. A similar calculation as in the proof of Proposition \ref{KtheoryC1} shows that $K_0(C)=\IZ[p]$ and $K_1(C)=\IZ\left[\kappa(p,u(\alpha))\right]$. Moreover, $d_2^{0,0}([p])=\kappa(p,u(\alpha))$ and $d_2^{0,1}=0$. The result now follows from the Pimsner-Voiculescu sequence for $(C\rtimes_{\alpha_1}\IZ,\check{\alpha}_2,\IZ)$.
\end{proof}

Next, we present an analogous construction yielding pointwise inner  $\IZ^2$-actions on amalgamated free product \cstar-algebras with non-trivial second page differential $d_2^{0,1}$. Let $B$ be a unital, separable \cstar-algebra whose $K$-groups both do not vanish. Assume further that there is a central unitary $w\in \CU(\CZ(B))$ with full spectrum and a unitary $x\in \CU_n(B)$ such that
\begin{align}
\label{forcedObstruction1}
\kappa(w^{(n)},x)\neq k[1]\in K_{0}(B)\quad \text{for all}\ k\in\IZ.
\end{align}
The two injective $*$-homomorphisms
\[
i_1:\CC(\IT)\into A,\ i_1(z):=u(\alpha)\quad\text{and}\quad i_2:\CC(\IT)\into B,\ i_2(z):=w,
\]
give rise to an amalgamated free product $C:=A\ast_{\CC(\IT)}B$. Observe that $u(\alpha)=w$ is a central unitary in $C$, and hence $\alpha$ extends to a pointwise inner action on $C$, which we also denote by $\alpha$. Then $d_2^{0,1}:K_1(C)\to K_0(C)$ satisfies
\[
d_2^{0,1}([x])=\kappa(u(\alpha)^{(n)},x)=\kappa(w^{(n)},x)\in K_0(C_0).
\]

\begin{lemma}
\label{Examplesd1}
It holds that $d_2^{0,1}([x]) = \kappa(w^{(n)},x)\neq0\in K_0(C)$.
\end{lemma}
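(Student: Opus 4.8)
The plan is to mirror the proof of Lemma \ref{Examplesd0}, exchanging the roles of the two $K$-groups, and to run the argument through Thomsen's six-term exact sequence \eqref{sixTermAmalgamated}. First I would observe that, since both $w$ and $x$ lie in $B$, the element $\kappa(w^{(n)},x)$ computed in $C$ is nothing but $K_0(j_2)$ applied to the class $\kappa(w^{(n)},x)\in K_0(B)$. Consequently it can be written as an image under $K_0(j_1)-K_0(j_2)$, namely
\[
\kappa(w^{(n)},x)=(K_0(j_1)-K_0(j_2))\bigl(0\oplus -\kappa(w^{(n)},x)\bigr)\in K_0(C).
\]
By exactness of \eqref{sixTermAmalgamated} at $K_0(A)\oplus K_0(B)$, this class vanishes if and only if $0\oplus -\kappa(w^{(n)},x)$ lies in the image of $K_0(i_1)\oplus K_0(i_2)$.

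The next step is to compute that image. Since $i_1$ and $i_2$ are unital (each sends $1=zz^*$ to the respective unit) and $K_0(\CC(\IT))=\IZ[1]$, the map $K_0(i_1)\oplus K_0(i_2)$ carries the generator $[1]$ to $([1_A],[1_B])$, so its image is the cyclic subgroup $\IZ\cdot([1_A],[1_B])\subseteq K_0(A)\oplus K_0(B)$. This is the one genuine point of departure from Lemma \ref{Examplesd0}: there the identity $[u(\alpha)]=0\in K_1(A)$ forced $K_1(i_1)=0$ and made the image a clean direct summand $0\oplus\IZ[w]$, whereas here both inclusions are unital, so the image is ``diagonal'' and the obstruction has to be read off from the $B$-coordinate alone.

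Finally I would derive the contradiction. Suppose $0\oplus -\kappa(w^{(n)},x)=k\cdot([1_A],[1_B])$ for some $k\in\IZ$. Reading off the second coordinate gives $\kappa(w^{(n)},x)=-k[1_B]\in K_0(B)$, which directly contradicts the standing assumption \eqref{forcedObstruction1}. Hence $0\oplus -\kappa(w^{(n)},x)$ is not in the image of $K_0(i_1)\oplus K_0(i_2)$, and therefore $d_2^{0,1}([x])=\kappa(w^{(n)},x)\neq 0\in K_0(C)$. I do not expect a serious obstacle here; the only subtle point is the correct identification of $\im(K_0(i_1)\oplus K_0(i_2))$, which---unlike in the degree-zero situation---is not a direct summand, so the non-vanishing must be extracted from the $B$-coordinate rather than from a splitting.
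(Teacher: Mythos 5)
Your argument is correct and is precisely the adaptation the paper intends: the published proof of Lemma \ref{Examplesd1} simply says it is ``very similar'' to that of Lemma \ref{Examplesd0}, and your run through Thomsen's sequence \eqref{sixTermAmalgamated} — including the correct identification of $\im(K_0(i_1)\oplus K_0(i_2))$ as the diagonal subgroup $\IZ\cdot([1_A],[1_B])$ and the extraction of the contradiction with \eqref{forcedObstruction1} from the $B$-coordinate — is exactly the right way to carry it out. No gaps.
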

\begin{proof}
The proof is very similar to the one of Lemma \ref{Examplesd0}.
\end{proof}

\begin{rem}
If $[1]\in K_0(A)$ has infinite order, then Lemma \ref{Examplesd1} remains true if we replace \eqref{forcedObstruction1} by the condition that $\kappa(w^{(n)},x)\neq0\in K_0(B)$.
\end{rem}

There exists a \cstar-dynamical system $(C_0,\alpha,\IZ^2)$ which is universal for $K_0$-obstructions for second page differentials associated with pointwise inner $\IZ^2$-actions. To define it, let again $A:=\mathrm{C}^*(H_{3})$ and equip it with the natural $\IZ^2$-action $\alpha$ from the last subsection. Moreover, let $B:=\CC(\IT^2)$ and set $w:=z_1$ and $x:=z_2$. As $\kappa(w,x)=\Fb\in K_0(B)$, \eqref{forcedObstruction1} is clearly satisfied. Form the amalgamated free product $C_0:=A\ast_{\CC(\IT)}\CC(\IT^2)$, which carries the induced pointwise inner $\IZ^2$-action $\alpha$.

Universality of this system expresses in the following property. Given a unital \cstar-algebra $D$, a pointwise inner action $\gamma:\IZ^2\curvearrowright D$ with $\gamma_1=\ad(\bar{u})$ and $\gamma_2=\ad(\bar{v})$ and a unitary $\bar{x}\in \CU(D)$, there is a unital and equivariant $*$-homomorphism 
\[
\varphi:(C_0,\alpha,\IZ^2)\to (D,\gamma,\IZ^2),\quad u\mapsto \bar{u},\ v\mapsto \bar{v},\ x\mapsto \bar{x}.
\]
By the naturality of the Baum-Connes spectral sequence,
\[
K_0(\varphi)(d_2^{0,1}([x]))=\tilde{d}_2^{0,1}([\bar{x}]),
\]
where $\tilde{d}_2$ denotes the second page differential associated with $\gamma$. In this way, $d_2^{0,1}([x])=\kappa(u(\alpha),x)\in K_0(C_0)$ can be considered as the universal $K_0$-obstruction for second page differentials associated with pointwise inner $\IZ^{2}$-actions. 

Note that the proof of Theorem \ref{HeisenbergAction} shows that $\kappa(u(\alpha),x)\in K_0(C_0)$ has infinite order and that $\IZ[\kappa(u(\alpha),x)]\into K_0(C_0)$ is split-injective.

\begin{prop}
\label{KtheoryC0}
The canonical embedding $j_1:A\into C_0$ is split-injective in $K$-theory and induces the following decompositions:
\[
K_0(C_0)\cong K_0(A)\oplus \IZ[\kappa(u(\alpha),x)]\quad \text{and}\quad K_1(C_0)\cong K_1(A)\oplus \IZ[x].
\]
In particular, $K_0(C_0)\cong K_1(C_0)\cong \IZ^4$.
\end{prop}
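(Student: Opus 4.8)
The plan is to run the argument of Proposition~\ref{KtheoryC1} essentially verbatim, replacing $B=\CC(\IT)\oplus\CC(\IT)$ by $B=\CC(\IT^2)$ and feeding the appropriate generators into the Thomsen six-term sequence~\eqref{sixTermAmalgamated} for $C_0=A\ast_{\CC(\IT)}\CC(\IT^2)$. The input data are $K_0(A)\cong K_1(A)\cong\IZ^3$ from Proposition~\ref{KtheoryHeisenberg}, together with $K_0(\CC(\IT^2))=\IZ^2[[1],\Fb]$ and $K_1(\CC(\IT^2))=\IZ^2[[z_1],[z_2]]$.

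First I would check that $K_*(i_1)\oplus K_*(i_2)\colon K_*(\CC(\IT))\to K_*(A)\oplus K_*(\CC(\IT^2))$ is split-injective. In degree $0$ it sends $[1]\mapsto[1]_A\oplus[1]$, and projecting onto the $[1]$-coordinate of $K_0(A)$ provides a retraction. In degree $1$ it sends $[z]\mapsto[u(\alpha)]_A\oplus[z_1]$; since $[u(\alpha)]=[z_1]=0\in K_1(A)$ (as already exploited in the proof of Lemma~\ref{Examplesd0}), this becomes $[z]\mapsto 0\oplus[z_1]$, and $[z_1]$ is a basis element of $K_1(\CC(\IT^2))$, again yielding a retraction. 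Split-injectivity then collapses~\eqref{sixTermAmalgamated} into a split short exact sequence of the form~\eqref{sequenceK1universal}, so $K_*(C_0)$ is torsion-free, and a rank count ($3+2-1=4$ in each degree) gives $K_0(C_0)\cong K_1(C_0)\cong\IZ^4$.

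Next I would establish split-injectivity of $j_1$. Because $u(\alpha)=z_1$ is central in $\mathrm{C}^*(H_3)$, the universal property of the amalgamated free product applied to $\id_A$ and the unital $*$-homomorphism $\CC(\IT^2)\to A$ with $z_1\mapsto u(\alpha)$, $z_2\mapsto 1$ (the two images commute since $z_1$ is central) produces a retraction $\varphi\colon C_0\to A$ with $\varphi\circ j_1=\id_A$. It then remains to identify the complementary $\IZ$-summands. Writing $\kappa(u(\alpha),x)=K_0(j_2)(\Fb)$ and $[x]=K_1(j_2)([z_2])$, I would first note that $K_*(C_0)$ is generated by $\im K_*(j_1)$ together with these two classes (since $K_0(j_2)([1])=[1]_{C_0}\in\im K_0(j_1)$ and $K_1(j_2)([z_1])=[u(\alpha)]_{C_0}=0$). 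Then, exactly as in Proposition~\ref{KtheoryC1}, an equation $\kappa(u(\alpha),x)=K_0(j_1)(g)$ would force $g\oplus\Fb\in\ker(K_0(j_1)-K_0(j_2))=\IZ\langle[1]_A\oplus[1]\rangle$, whence $\Fb=k[1]$ in $K_0(\CC(\IT^2))$ for some $k$ — contradicting the independence of $[1]$ and $\Fb$; the degree-$1$ case gives $[z_2]=k[z_1]$, equally impossible. Hence neither class lies in $\im K_*(j_1)$, and since $K_*(C_0)$ is torsion-free of rank $4$ with $\im K_*(j_1)\cong\IZ^3$ a direct summand, the asserted decompositions follow.

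The only genuinely delicate point — the step I would single out as the main obstacle — is the degree-$1$ split-injectivity of $K_1(i_1)\oplus K_1(i_2)$, which rests entirely on the vanishing $[u(\alpha)]=0\in K_1(A)$. Without it, the image of $[z]$ would not land in the $K_1(\CC(\IT^2))$-factor as a direct-summand generator, and the six-term sequence would fail to split so cleanly; everything else is the routine bookkeeping already carried out for $C_1$.
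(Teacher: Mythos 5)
Your proposal is correct and is essentially the paper's own argument: the paper's proof of this proposition simply says ``The proof is similar to the one of Proposition \ref{KtheoryC1},'' and you have carried out exactly that adaptation, with the right generators ($[1],\Fb$ and $[z_1],[z_2]$ for $\CC(\IT^2)$), the right retraction $\varphi\colon C_0\to A$ sending $z_2\mapsto 1$, and the same kernel-of-$(K_*(j_1)-K_*(j_2))$ contradiction to exclude $\kappa(u(\alpha),x)$ and $[x]$ from $\im K_*(j_1)$. The only quibble is your closing remark: split-injectivity of $K_1(i_1)\oplus K_1(i_2)$ does not actually rest on $[u(\alpha)]=0\in K_1(A)$, since projecting onto the $[z_1]$-coordinate of $K_1(\CC(\IT^2))$ already gives a retraction; the vanishing merely tidies up the description of the image, so this is a harmless overstatement rather than a gap.
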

\begin{proof}
The proof is similar to the one of Proposition \ref{KtheoryC1}.
\end{proof}

\begin{theorem}
The second page differential associated with $\alpha:\IZ^2\curvearrowright C_0$ is non-trivial. The $K$-theory of the crossed product $C_0\rtimes_\alpha\IZ^2$ satisfies
\[
K_0(C_0\rtimes_{\alpha}\IZ^2)\cong K_1(C_0\rtimes_\alpha\IZ^2)\cong \IZ^{13}.
\]
In particular, $K_*(C_0\rtimes_\alpha\IZ^2)\ncong K_*(C_0\otimes \CC(\IT^2))$.
\end{theorem}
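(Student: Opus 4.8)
The plan is to follow the proof of Theorem~\ref{K1universal} almost verbatim, with the roles of $d_2^{0,0}$ and $d_2^{0,1}$ interchanged: here the non-trivial differential is $d_2^{0,1}$, not $d_2^{0,0}$. The whole computation reduces to determining the kernels and cokernels of $d_2^{0,0}$ and $d_2^{0,1}$ on $K_*(C_0)$ and feeding them into the Pimsner--Voiculescu sequence for $(C_0\rtimes_{\alpha_1}\IZ,\check\alpha_2,\IZ)$.

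First I would pin down the differential. By Lemma~\ref{Examplesd1}, $d_2^{0,1}([x])=\kappa(u(\alpha),x)\neq 0$, which already establishes that $d_2$ is non-trivial; moreover $d_2^{0,0}(\kappa(u(\alpha),x))=d_2^{0,0}(d_2^{0,1}([x]))=0$ by Proposition~\ref{limitationsObstructionInner}. Since $K_*(j_1)\colon K_*(A)\to K_*(C_0)$ is split-injective and equivariant (Proposition~\ref{KtheoryC0}), naturality of the Baum--Connes spectral sequence identifies $d_2^{0,*}$ on $K_*(A)\subset K_*(C_0)$ with the second page differential of $(A,\alpha,\IZ^2)$ determined in the proof of Theorem~\ref{HeisenbergAction}. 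Feeding this into the decompositions $K_0(C_0)\cong K_0(A)\oplus\IZ[\kappa(u(\alpha),x)]$ and $K_1(C_0)\cong K_1(A)\oplus\IZ[x]$ of Proposition~\ref{KtheoryC0}, I expect $d_2^{0,0}\equiv 0$ on $K_0(C_0)$ — it vanishes on $K_0(A)$ by Theorem~\ref{HeisenbergAction} and on $\kappa(u(\alpha),x)$ by the previous sentence — whereas $\op{im}(d_2^{0,1})$ is the rank-three direct summand obtained by adjoining $\kappa(u(\alpha),x)$ to the rank-two summand of $K_0(A)$ supplied by Theorem~\ref{HeisenbergAction}. Consequently $\ker(d_2^{0,0})\cong\coker(d_2^{0,0})\cong\IZ^4$ and $\ker(d_2^{0,1})\cong\coker(d_2^{0,1})\cong\IZ$.

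Next I would run the Pimsner--Voiculescu sequence for $\check\alpha_2$. As $\alpha_1=\ad(u)$ is inner, $C_0\rtimes_{\alpha_1}\IZ\cong C_0\otimes\CC(\IT)$, so $K_i(C_0\rtimes_{\alpha_1}\IZ)\cong K_0(C_0)\oplus K_1(C_0)\cong\IZ^8$. By the discussion preceding Theorem~\ref{obstructiond0} together with Proposition~\ref{differentialObstructionHomomorphism}, the map $K_*(\check\alpha_2)-\id$ is the endomorphism of the split extension $0\to\coker(d_*(\alpha_1))\to K_*(C_0\rtimes_{\alpha_1}\IZ)\to\ker(d_{*+1}(\alpha_1))\to 0$ whose Snake Lemma homomorphism is $d_2^{0,*+1}$; since $\alpha_1,\alpha_2$ are inner the two outer vertical maps vanish, giving $\ker(K_*(\check\alpha_2)-\id)\cong K_*(C_0)\oplus\ker(d_2^{0,*+1})$ and $\coker(K_*(\check\alpha_2)-\id)\cong\coker(d_2^{0,*+1})\oplus K_{*+1}(C_0)$. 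Splitting the resulting six-term sequence into two short exact sequences and adding ranks then yields $K_0(C_0\rtimes_\alpha\IZ^2)\cong K_1(C_0\rtimes_\alpha\IZ^2)\cong\IZ^{13}$, with no torsion since every group in sight is free abelian.

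Finally, for the last sentence I would compute $K_*(C_0\otimes\CC(\IT^2))$ by the K\"unneth theorem: with $K_0(C_0)\cong K_1(C_0)\cong\IZ^4$ and $K_*(\CC(\IT^2))\cong\IZ^2$ in each degree, one gets $K_0(C_0\otimes\CC(\IT^2))\cong K_1(C_0\otimes\CC(\IT^2))\cong\IZ^{16}$, and $16\neq 13$ settles the non-isomorphism. The only genuinely delicate point — and the place this computation departs from that of Theorem~\ref{K1universal} — is the bookkeeping in the second paragraph: one must verify that $d_2^{0,0}$ vanishes identically on $K_0(C_0)$ (not merely on the image of $j_1$) and that $\op{im}(d_2^{0,1})$ is exactly a rank-three \emph{direct summand}, so that no spurious torsion appears and the rank count is clean.
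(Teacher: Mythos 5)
Your proposal is correct and follows the paper's own argument step for step: establish $d_2^{0,1}([x])=\kappa(u(\alpha),x)\neq 0$ and $d_2^{0,0}\circ d_2^{0,1}=0$ via Proposition \ref{limitationsObstructionInner}, use split-injectivity of $K_*(j_1)$ and naturality of the spectral sequence to import the differential of $(\mathrm{C}^*(H_3),\alpha,\IZ^2)$ on the summand $K_*(A)\subset K_*(C_0)$, conclude $d_2^{0,0}=0$ and $\ker(d_2^{0,1})\cong\coker(d_2^{0,1})\cong\IZ$, and finish with the Pimsner--Voiculescu sequence for $(C_0\rtimes_{\alpha_1}\IZ,\check{\alpha}_2,\IZ)$ exactly as in Theorem \ref{HeisenbergAction}. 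The extra bookkeeping you flag (that $\op{im}(d_2^{0,1})$ is a rank-three direct summand, so all groups in the six-term sequence are free and ranks add) is precisely the content implicitly carried over from the proofs of Theorem \ref{HeisenbergAction} and Proposition \ref{KtheoryC0}, so nothing is missing.
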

\begin{proof}
It holds that
\[
d_2^{0,1}([x])=\kappa(u(\alpha),x)\quad \text{and}\quad d_2^{0,0}(\kappa(u(\alpha),x))=0,
\]
Note that the second equality is a consequence of Proposition \ref{limitationsObstructionInner}. As $K_*(j_1):K_*(A)\to K_*(C_0)$ is split-injective by Proposition \ref{KtheoryC0}, naturality of the Baum-Connes spectral sequence yields that on $K_*(A)\subset K_*(C_0)$, $d_2^{0,*}:K_*(C_0)\to K_{*-1}(C_0)$ coincides with the second page differential associated with $(A,\alpha,\IZ^2)$. The proof of Theorem \ref{HeisenbergAction} therefore yields
\[
d_2^{0,0}(\alpha)=0\quad \text{and}\quad \ker(d_2^{0,1})\cong\coker(d_2^{0,1})\cong\IZ.
\]
Finally, we proceed as in the proof of Theorem \ref{HeisenbergAction}, and consider the Pimsner-Voiculescu sequence for $(C_0\rtimes_{\alpha_1}\IZ,\check{\alpha}_2,\IZ)$.
\end{proof}


\begin{appendix}

\section{Spectral sequences associated with finite cofiltrations of \cstar-algebras} 
\label{Appendix}

\noindent
We recall the construction of the spectral sequence associated with a finite cofiltration of \cstar-algebras. This construction is closely related to Schochet's spectral sequence \cite{Schochet1981} associated with a filtration of a \cstar-algebra by closed ideals. For further information on spectral sequences, we refer the reader to \cite{Weibel1994}.

Consider the following finite cofiltration of \cstar-algebras
\begin{align}
\label{cofiltration}
\begin{xy}
	\xymatrix{
		A=F_n \ar@{->>}[r]^{\pi_{n}} & F_{n-1} \ar@{->>}[r] & \cdots \ar@{->>}[r] & F_1 \ar@{->>}[r]^/-0.1cm/{\pi_1} & F_0 \ar@{->>}[r]^/-0.3cm/{\pi_0} & F_{-1}=0.
	}
\end{xy}
\end{align}
We trivially extend it by setting $F_k:=F_n$ for $k>n$, $F_k:=0$ for $k< -1$, and $\pi_k:=\id_{F_k}$ in either case.

The ideal $I_k:=\op{ker}(\pi_{k})$ induces a short exact sequence
\begin{align}
\label{ExtensionToCofiltration}
\begin{xy}
	\xymatrix{
		0 \ar[r] & I_k \ar[r]^{\iota_{k}} & F_k \ar[r]^/-0.1cm/{\pi_k} & F_{k-1}\ar[r]  & 0,
	}
\end{xy}
\end{align}
whose associated boundary map is denoted by $\rho^{(k)}_*:K_*(F_{k-1})\to K_{*+1}(I_k)$.

The spectral sequence we construct allows, in principle, to determine the \linebreak$K$-theory of $A$ by means of $K_*(I_k)$ and $K_*(F_k)$. We use the following standard technique due to Massey \cite{Massey1952, Massey1953}.

\begin{defprop}[Exact couple]
An \emph{exact couple} is a pair of abelian groups $A,B$ together with group homomorphisms $f:A\to A$, $g:A\to B$, and $h:B\to A$ such that the diagram
\[
	\xymatrix{
	A \ar[rr]^f  & & A \ar[dl]^/-0.2cm/g\\
		& B \ar[ul]^/0.2cm/h &
	}
\]
commutes and is exact in the sense that at each place the image of the in-going arrow coincides with the kernel of the out-going one. We shall denote such an exact couple by $(A,B,f,g,h)$. The \emph{derived couple} is the induced exact couple $(\op{im}(f),\op{ker}(g\circ h)/\op{im}(g\circ h),f,g^\prime,h^\prime)$, where
\[\def\arraystretch{1.3}
\begin{array}{lcl}
	g^{\prime}(a)=[g(\bar{a})] &  & \text{for}\ a=f(\bar{a})\in \op{im}(f),\\
	h^{\prime}([b])=h(b) &  & \text{for}\ b\in \op{ker}(g\circ h).
\end{array}
\]

A \emph{morphism} of exact couples $(\varphi,\psi):(A,B,f,g,h)\to (A^\prime,B^\prime,f^\prime,g^\prime,h^\prime)$ is a pair of group homomorphisms $\varphi:A\to A^\prime$ and $\psi:B\to B^\prime$ such that
\[
\begin{array}{lcr}
\varphi\circ f=f^\prime\circ\varphi, & \varphi\circ h=h^\prime\circ \psi, & \psi\circ g=g^\prime\circ\varphi.
\end{array}
\]
A morphism of exact couples naturally induces a morphism between the derived couples.
\end{defprop}

Let us now explain how the notion of exact couples can be used to obtain the desired spectral sequence associated with \eqref{cofiltration}. For $p,q\in \IZ$, let
\[
E^{p,q}_{1}:=K_{p+q}(I_{p})\quad \text{and}\quad D_1^{p,q}:=K_{p+q}(F_{p}),
\]
and set
\[
E_1^{*,*}:=\bigoplus\limits_{p,q\in \IZ} E_1^{p,q}\quad \text{and}\quad D_1^{*,*}:=\bigoplus\limits_{p,q\in \IZ}D_1^{p,q}.
\]
The long exact sequences associated with \eqref{ExtensionToCofiltration} give rise to an exact couple
\begin{align}
\label{ExactCouple}
\begin{xy}
	\xymatrix@C+0.5cm@R+0.5cm{
		D_1^{*,*} \ar[rr]^{K_*(\pi_*)}_{(-1,1)} & & D_1^{*,*} \ar[dl]^/-0.2cm/{\rho^{(*)}_*}_{(1,0)}\\
		& E_1^{*,*} \ar[lu]^{K_*(\iota_*)}_{(0,0)}
	}
\end{xy}
\end{align}
We have attached each arrow with a pair of numbers denoting the bidegree of the respective map. Often, the bigrading is suppressed and we just write $E_1$ and $D_1$. 

To obtain the derived exact couple, define $d_1:E_1\to E_1$ by
\[
d^{p,q}_1:=\rho^{(p+1)}_{p+q}\circ K_{p+q}(\iota_{p}):E_1^{p,q}\to E_1^{p+1,q}.
\]
Then, $d_1$ obviously has bidegree $(1,0)$ and satisfies $d_1\circ d_1=0$ by exactness of \eqref{ExactCouple}. Set
\[
E_2^{p,q}:=\op{ker}(d_1^{p,q})/\op{im}(d_1^{p-1,q})\quad \text{and}\quad D_2^{p,q}:=\op{im}(K_{p+q}(\pi_{p+1})),
\]
and define
\[\def\arraystretch{1.3}
\begin{array}{lcl}
\overline{\iota}_{p,q}:E_{2}^{p,q}\to D^{p,q}_{2}, & & [y]\mapsto K_{p+q}(\iota_{p})(y),\\
\overline{\rho}_{p,q}:D_{2}^{p,q}\to E_{2}^{p+2,q-1},& & x\mapsto [\rho^{(p+2)}_{p+q}(\bar{x})],
\end{array}
\]
where $K_{p+q}(\pi_{p+1})(\bar{x})=x$. The derived couple of \eqref{ExactCouple} is now given by
\[
\xymatrix@C+0.5cm@R+0.5cm{
D_2^{*,*} \ar[rr]^{K_*(\pi_*)}_{(-1,1)} & & D_2^{*,*} \ar[dl]^{\overline{\rho}_{*,*}}_{(2,-1)}\\
& E_2^{*,*} \ar[lu]^{\overline{\iota}_{*,*}}_{(0,0)}
}
\]
We can use the endomorphism $d_2:E_2\to E_2$ given by $d_{2}^{p,q}=\overline{\rho}_{p,q}\circ \overline{\iota}_{p,q}$ to derive another exact couple. Observe that $d_2$ has bidegree $(2,-1)$.

By repeating this procedure, we obtain a family of pairs $(E_k,d_k)_{k\geq1}$, the \emph{spectral sequence} associated with the cofiltration \eqref{cofiltration}. The abelian group $E_k$ is called the \emph{$E_{k}$-term}, and $d_k$ is called the \emph{$k$-th page differential}, which has bidegree $(k,-k+1)$.

Bott periodicity gives rise to isomorphisms $(E_{k}^{p,2q},d_k^{p,2q})\cong (E_{k}^{p,0},d_k^{p,0})$, i.e.\ group isomorphisms respecting the respective differentials. However, we will not always use these identifications, since the bookkeeping of the occurring indices is easier using the original notation.

For $m\geq n+1$ and for all $p,q\in \IZ$, the differential
\[
d^{p,q}_{m}:E^{p,q}_{m}\to E^{p+m,q-m+1}_{m}
\]
vanishes since either $E^{p,q}_{m}=0$ or $E^{p+m,q-m+1}_{m}=0$. Therefore, $E_{m}=E_{n+1}$, and we say that the spectral sequence \emph{collapses}. We define the \emph{$E_{\infty}$-term} as $E_{\infty}^{p,q}:=E_{n+1}^{p,q}$. It is connected to $K_{*}(A)$ in the following way. For $q=0,1$, consider the diagram
\[
\xymatrix{
K_q(A)=K_q(F_n) \ar[r] & K_q(F_{n-1}) \ar[r] & \cdots \ar[r] & K_q(F_{-1})=0.
}
\]
Define $\CF_p K_q(A):=\op{ker}(K_q(A)\to K_q(F_p))$ for $p=-1,\ldots,n$, and observe that this gives rise to a filtration of abelian groups
\[
\xymatrix{
0=\CF_n K_q(A) \ar@^{(->}[r] & \CF_{n-1} K_q(A) \ar@^{(->}[r] & \cdots \ar@^{(->}[r] & \CF_{-1} K_q(A)=K_q(A).
}
\]
One can now show the existence of exact sequences
\[
\xymatrix{
0 \ar[r] & \CF_p K_{p+q}(A) \ar[r] & \CF_{p-1} K_{p+q}(A) \ar[r] & E_\infty^{p,q} \ar[r] & 0,
}
\]
or in other words, there are isomorphisms
\[
E_\infty^{p,q}\cong \CF_{p-1} K_{p+q}(A)\big/\CF_p K_{p+q}(A).
\]
Hence, the $E_\infty$-term determines the $K$-theory of $A$ up to group extension problems. We say that the spectral sequence $(E_k,d_k)_{k\geq 1}$ \emph{converges} to $K_*(A)$.

\begin{rem}
\label{inductive definition differentials}
The inductive definition of $(E_k,d_k)$ using exact couples admits the following description of the differential $d^{p,q}_k:E_k^{p,q}\to E_k^{p+k,q-k+1}$. Let $[x]\in E_k^{p,q}$ be represented by $x\in K_{p+q}(I_p)$, and consider its image $K_{p+q}(\iota_p)(x)\in K_{p+q}(F_p)$ under the map induced by the natural inclusion $\iota_p:I_p\to F_p$. Since we have started in $E_k$, there is a lift $y\in K_{p+q}(F_{p+k-1})$ for $K_{p+q}(\iota_p)(x)$ under $K_{p+q}(F_{p+k-1})\to K_{p+q}(F_p)$. Then
\[
d_k^{p,q}([x])=\left[\rho^{(p+k)}_{p+q}(y)\right]\in E_k^{p+k,q-k+1}.
\]
\end{rem}

The spectral sequence associated with a finite cofiltration is natural in the following sense. Assume that we have a commutative diagram of the form
\[
\xymatrix{
A \ar@{->>}[r] \ar[d]^{\varphi} & F_{n-1} \ar@{->>}[r] \ar[d]^{\varphi_{n-1}} & \cdots \ar@{->>}[r] & F_0 \ar[r] \ar[d]^{\varphi_0} & 0 \\
B \ar@{->>}[r] & G_{n-1} \ar@{->>}[r] & \cdots \ar@{->>}[r] & G_0 \ar[r] & 0.
}
\]
We write $(E_{A,k},d_{A,k})_{k\geq1}$ and $(E_{B,k},d_{B,k})_{k\geq1}$ for the spectral sequence associated with the upper and lower cofiltration, respectively. Note that the $\varphi_k$ are uniquely determined by $\varphi$. By the naturality of $K$-theory, $\varphi$ induces a morphism between the exact couples \eqref{ExactCouple} belonging to the upper and lower cofiltration, respectively. In this way, we obtain a \emph{morphism of spectral sequences}, i.e.\ a collection of homomorphisms with bidegree $(0,0)$
\[
E_k(\varphi):E_{A,k}\to E_{B,k},\ k\geq1,
\]
which are compatible with the differentials $d_{A,k}$ and $d_{B,k}$. This in turn yields a bigraded group homomorphism $E_{\infty}(\varphi):E_{A,\infty}\to E_{B,\infty}$. Such a spectral sequence homomorphism is called an \emph{isomorphism}, if there is some $k\geq 1$ such that $E_k(\varphi)$ is an isomorphism. Note that in this case, $E_l(\varphi)$ is an isomorphism for all $l\geq k$. If $\phi$ is a $*$-isomorphism, then, of course, $(E_k(\phi))_{k\geq 1}$ is an isomorphism of spectral sequences.

Such a spectral sequence homomorphism is well-behaved with respect to convergence in the following sense. By the naturality of $K$-theory, we get a commutative diagram
\[
\xymatrix{
0 \ar@^{(->}[r] & \CF_{n-1} K_q(A) \ar@^{(->}[r] \ar[d]^{\CF_{n-1} K_q(\varphi)} & \cdots \ar@^{(->}[r] & K_q(A) \ar[d]^{K_q(\varphi)}\\
0 \ar@^{(->}[r] & \CF_{n-1} K_q(B) \ar@^{(->}[r] & \cdots \ar@^{(->}[r] & K_q(B)
	}
\]
One can show that $E_{\infty}(\varphi)$ is induced by $\CF_* K_*(\varphi)$ in the sense that for every $p=-1,\ldots,n$ and $q=0,1$, there is a commutative diagram with exact rows
\[
\xymatrix{
0 \ar[r] & \CF_p K_q(A) \ar[r] \ar[d]^{\CF_p K_q(\varphi)} & \CF_{p-1} K_q(A) \ar[r] \ar[d]^{\CF_{p-1} K_q(\varphi)} & E_\infty^{p,q} \ar[r] \ar[d]^{E_\infty^{p,q}(\varphi)} & 0\\
0 \ar[r] & \CF_p K_q(B) \ar[r] & \CF_{p-1} K_q(B) \ar[r] & E_\infty^{p,q} \ar[r] & 0
}
\]
This yields a method to determining $K_*(\varphi)$ by means of the spectral sequences whenever $\varphi:A\to B$ is a cofiltration-respecting $*$-homomorphism.
\end{appendix}


\bibliographystyle{plain}
\bibliography{D:/Mathe/Projekte/sbarlak}

\end{document}